\documentclass[12pt,reqno]{article}
\usepackage{amssymb,amscd,amsmath,amsthm,color}
\textheight  23 true cm \textwidth  15.7 true cm \hoffset -1.1cm \voffset -1.9cm
\parindent=18pt

\newtheorem{theorem}{Theorem}[section]
\newtheorem{lemma}[theorem]{Lemma}
\newtheorem{cor}[theorem]{Corollary}
\newtheorem{prop}[theorem]{Proposition}
\usepackage{enumerate,amssymb}
\theoremstyle{definition}
\newtheorem{definition}[theorem]{Definition}
\newtheorem{example}[theorem]{Example}

\theoremstyle{remark}
\newtheorem{remark}[theorem]{Remark}

\numberwithin{equation}{section}

\def\bE{\mathbb{E}}
\def\bM{\mathbb{M}}
\def\bN{\mathbb{N}}
\def\bR{\mathbb{R}}

\def\bZ{\mathbb{Z}}
\def\ffi{\mathcal{\varphi}}
\def\bP{\mathbb{P}}
\def\bw{\mathbf{w}}
\def\bA{\mathbf{A}}
\def\Tr{\mathrm{Tr}\,}

\begin{document}
\baselineskip=15pt

\title{Jensen and Minkowski  inequalities for \qquad  \qquad \qquad operator means
and anti-norms}

\author{Jean-Christophe Bourin and Fumio Hiai} 

\date{ }

\maketitle

\begin{abstract}
\noindent
Jensen inequalities for positive linear maps of Choi and Hansen-Pedersen type are
established for a large class of operator/matrix means such as some $p$-means and some
Kubo-Ando means. These results are also extensions of the Minkowski determinantal
inequality. To this end we develop the study of anti-norms, a notion parallel to the 
symmetric norms in matrix analysis, including functionals like Schatten $q$-norms for a
parameter $q\in[-\infty,1]$ and the Minkowski functional $\det^{1/n}A$.  An interpolation theorem for the Schur multiplication is given in this setting.

\bigskip\noindent
{\it 2010 Mathematics Subject Classification:}
Primary 15A60, 47A30, 47A60

\medskip\noindent
{\it Key Words and Phrases:}
Matrix, operator mean, positive linear map, symmetric norm, anti-norm, convex function, concave function, 
majorization, Schur product.

\end{abstract}

\section{Introduction}
Jensen inequalities for matrices and operators have various versions. The most general ones
involve a unital positive linear map $\bE :\bM_n\to\bM_m$. For instance, if $f(t)$ is
operator concave on an interval $\Omega$, then
\begin{equation}\label{Choi}
f(\bE(Z)) \ge \bE(f(Z))
\end{equation}
for all $Z\in\bM_n\{\Omega\}$, the Hermitians with spectra in $\Omega$. This is Choi's
inequality \cite{Choi}, which is specialized to Hansen-Pedersen's inequality \cite{HP}
\begin{equation}\label{HP}
f\left(\sum_{i=1}^kC_i^*Z_iC_i\right) \ge \sum_{i=1}^k C_i^*f(Z_i)C_i
\end{equation}
for $C^*$-convex combinations in $\bM_n\{\Omega\}$ with $n\times m$ matrices $C_i$ such
that $\sum_{i=1}^k C_i^*C_i=I$, the identity. These Jensen's inequalities are famous
characterizations of operator concavity of the function $f$:
\begin{equation}\label{OC}
f\left(\frac{A+B}{2}\right) \ge \frac{f(A)+f(B)}{2},\qquad A,B\in\bM_n\{\Omega\}.
\end{equation}
Are there similar inequalities by making use of the $p$th power map 
$\bE_p(Z):=\bE^{1/p}(Z^p)$ with $p>0$\,? We will deal with this question in Section 2.
This contains some Jensen type inequalities for the power $p$-means
\begin{equation}\label{p-means}
A\,\beta_p\,B :=\left(\frac{A^p+B^p}{2}\right)^{1/p}
\end{equation}
of two positive operators $A,B$.

Sections 3 and 4 are concerned with the operator means in the Kubo-Ando sense \cite{KA}.
The concavity results obtained in Section 2 for the means \eqref{p-means} have analogous
statements for a natural class of operator means; this is the central part of the paper.
In Section 3 we obtain the Minkowski type inequality
\begin{equation}\label{Mink}
{\det}^{1/n}(A\,\sigma\,B)\ge({\det}^{1/n}A)\,\sigma\,({\det}^{1/n}B),
\end{equation}
when $\sigma$ is an operator mean with some geometric convexity property, in particular, an
average of the weighted geometric means $A\,\#_\alpha\,B$, which we will call a
geodesic mean. Thus \eqref{Mink} extends the  Minkowski inequality for the arithmetic mean.
Section 4 further extends these inequalities to those involving concave functions in the
general setting of anti-norms, a class of functionals on $\bM_n^+:=\bM_n\{[0,\infty)\}$,
including the Schatten $q$-anti-norms for $q\in(-\infty,1]$ and the Minkowski
functional $A\mapsto\det^{1/n}A$. Jensen type inequalities similar to those in Section 2
will be obtained for anti-norms.

The means in Sections 3 and 4 do not cover a wide class of Kubo-Ando means, but they turn
out rather natural as they have extensions for several variables, generalizing the 
geometric means of several matrices introduced by Moakher \cite{Mo} and Bhatia-Holbrook
\cite{BH} (also by \cite{ALM} in a different approach). This is our concern in Section 5.
We will extend some recent inequalities due to Lawson-Lim \cite{LL} and Bhatia-Karandikar
\cite{BK}.

Section 6, a related but independent complement, gives several basic facts on symmetric
anti-norms. It is noticed that the Minkowski functional $A\mapsto\det^{1/n}A$ is quite a
special anti-norm. We show some interpolation properties for symmetric
anti-norms, with a stronger version for Schur multiplication maps.
Finally, we point out a reverse H\"older inequality. 

Let $A,B\in\bM_n^+$, and let $\lambda_1(A)\ge\dots\ge\lambda_n(A)$ denote the
eigenvalues of $A$ listed in decreasing order with multiplicities. The supermajorization
$A\prec^w B$ means that
$$
\sum_{j=1}^k \lambda_{n+1-j}(A) \ge \sum_{j=1}^k \lambda_{n+1-j}(B),
\qquad k=1,\dots, n.
$$
If equality holds when $k=n$, we have the usual majorization $A\prec B$. We write
$A^{\downarrow}$ for the diagonal matrix whose entries on the diagonal are the
$\lambda_i(A)$'s in decreasing order, and $A^{\uparrow}$ for that whose diagonals are the
$\lambda_i(A)$'s in increasing order. The famous Lidskii-Wielandt and the Ky Fan
majorizations (see \cite{MO,Bh,LM}) are written as
\begin{equation}\label{Fan-Lidskii}
A^{\downarrow}+B^{\uparrow}\prec A+B\prec A^{\downarrow}+B^{\downarrow}.
\end{equation}
By the log-supermajorization $A\prec^{w(\log)}B$ we mean that
$$
\prod_{j=1}^k \lambda_{n+1-j}(A) \ge \prod_{j=1}^k \lambda_{n+1-j}(B), \qquad k=1,\dots,n.
$$
The log-supermajorization version of \eqref{Fan-Lidskii} for an operator mean $\sigma$
might be
\begin{equation}\label{Conj}
A^{\downarrow}\,\sigma\,B^{\uparrow}\prec^{w(\log)}
A\,\sigma\,B\prec^{w(\log)} A^{\downarrow}\,\sigma\,B^{\downarrow}.
\end{equation}
Although the problem of characterizing $\sigma$ for which two relations in \eqref{Conj}
hold is left open, we prove a partial result when $\sigma$ is a geodesic mean.

Two significant features of the present paper are continued from the previous
\cite{BH1}. The first is the relation between supermajorization and anti-norms. We noted
in \cite{BH1} that supermajorization leads to inequalities for anti-norms. In Section 4
we adapt this to log-supermajorization and a sub-class of anti-norms, called derived
anti-norms, and extend the Minkowski type inequalities in Section 3 to anti-norm
inequalities.

The second feature is the use of the Minkowski or Jensen type inequalities via unitary
orbits for concave functions. Likewise in \cite{BH1}, we apply the following substitute
for \eqref{Choi}--\eqref{OC} when $f$ is a general concave function.

\begin{theorem}\label{T-1.1}
Let $\bE : \bM_n\to \bM_m$ be a unital positive linear map, let $f(t)$ be a concave
function on an interval $\Omega$, and let $Z\in \bM_n\{\Omega\}$. Then, for some
unitaries $U,\,V\in\bM_m$,
\begin{equation*}
f(\bE(Z))\ge \frac{U\bE(f(Z))U^*+V\bE(f(Z))V^*}{2}.
\end{equation*}
If furthermore $f(t)$ is monotone, then we can take $U=V$.
\end{theorem}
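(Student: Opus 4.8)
The plan is to reduce everything to a scalar Jensen inequality via vector states, and then to read the operator inequality off eigenvalue data. Throughout write $W:=\bE(Z)$, $Y:=\bE(f(Z))$ and $X:=f(W)=f(\bE(Z))$, all in $\bM_m$, and note $\mathrm{spec}(W)\subseteq\Omega$ because $\bE$ is unital positive. The basic remark is that for every unit vector $u\in\mathbb{C}^m$ the functional $A\mapsto\langle u,\bE(A)u\rangle$ is a state on $\bM_n$, so the scalar Jensen inequality for concave $f$ gives $\langle u,Yu\rangle=\langle u,\bE(f(Z))u\rangle\le f(\langle u,\bE(Z)u\rangle)=f(\langle u,Wu\rangle)$. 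This one inequality, fed into the Courant--Fischer min--max formula $\lambda_j(Y)=\min_{\dim L=m-j+1}\max_{u\in L,\,\|u\|=1}\langle u,Yu\rangle$, is the engine of the whole argument.

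Suppose first that $f$ is monotone. Inserting the state inequality into the min--max and using that a continuous monotone $f$ commutes with the inner $\max$ and the outer $\min$ (turning them into a min--max for $W$), I obtain $\lambda_j(Y)\le f(\lambda_j(W))=\lambda_j(X)$ when $f$ is nondecreasing, with the single change that the eigenvalue of $W$ is counted from the bottom when $f$ is nonincreasing. Thus $\lambda_j(Y)\le\lambda_j(X)$ for all $j$, and this eigenvalue domination is equivalent to the existence of a single unitary $U$ with $Y\le U^*XU$, i.e.\ $X\ge UYU^*$. Taking $V=U$ settles the monotone statement.

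For general concave $f$, let $c\in\Omega$ be a point at which $f$ passes from nondecreasing to nonincreasing (if $f$ is monotone we are back to the previous case), and let $P$ be the spectral projection of $W$ onto its eigenvalues $\ge c$. Since $P$ commutes with $W$, it commutes with $X=f(W)$, so $X=PXP\oplus P^\perp XP^\perp$ is block diagonal. The compressions $\bE_P:=P\bE(\cdot)P$ and $\bE_{P^\perp}:=P^\perp\bE(\cdot)P^\perp$ are again unital positive maps onto the two blocks, with $PYP=\bE_P(f(Z))$ and, since $\mathrm{ran}\,P$ is $W$-invariant, $PXP=f(\bE_P(Z))$ (and likewise for $P^\perp$). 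On $\mathrm{ran}\,P$ the numerical range of $\bE_P(Z)$ lies in $[c,\infty)\cap\Omega$, where $f$ is nonincreasing, and on $\mathrm{ran}\,P^\perp$ it lies in $(-\infty,c]\cap\Omega$, where $f$ is nondecreasing. Hence the argument of the previous paragraph applies verbatim to each block---it used only monotonicity of $f$ on the numerical range of the relevant $W$---and yields block unitaries $U_+,U_-$ with $U_+(PYP)U_+^*\le PXP$ and $U_-(P^\perp YP^\perp)U_-^*\le P^\perp XP^\perp$.

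Setting $U_1:=U_+\oplus U_-$ and using block diagonality of $X$ gives $U_1\mathcal{C}(Y)U_1^*\le X$, where $\mathcal{C}(Y):=PYP+P^\perp YP^\perp$ is the pinching of $Y$ along $P$. The closing step is the identity $\mathcal{C}(Y)=\tfrac12\bigl(Y+RYR\bigr)$ with the symmetry $R:=2P-I$ (a unitary), which exhibits a two-block pinching as the average of two unitary conjugates; with $V_1:=U_1R$ this rewrites the last inequality as $X\ge\tfrac12(U_1YU_1^*+V_1YV_1^*)$, the assertion. I expect the main obstacle to be exactly this passage from per-block spectral information to one genuine operator inequality for $Y$: eigenvalue domination only furnishes a unitary conjugation bound on each block, not an operator inequality, and it is the pinching identity that converts the two block bounds into the two-unitary average. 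This also explains structurally why a single unitary suffices precisely when $f$ is monotone, for then no splitting---hence no pinching---is needed.
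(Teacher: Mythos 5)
Two preliminary remarks frame this review. First, the paper itself contains no proof of Theorem \ref{T-1.1}: it defers to \cite{B1} and \cite{BL}. So the only meaningful comparison is with those references, and your argument is essentially the one used there: scalar Jensen through the vector states $A\mapsto\langle u,\bE(A)u\rangle$ combined with Courant--Fischer to get the eigenvalue domination $\lambda_j(\bE(f(Z)))\le\lambda_j(f(\bE(Z)))$ in the monotone case (whence a single unitary), and, for general concave $f$, the spectral splitting of $\bE(Z)$ at the turning point of $f$ followed by the pinching identity $PYP+P^\perp YP^\perp=\tfrac12(Y+RYR)$ with $R=2P-I$, which is exactly the mechanism that produces the two-unitary average. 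All the individual steps check out: the state inequality, the commuting of a monotone $f$ with the attained max/min in the min--max formula, the passage from $\lambda_j(Y)\le\lambda_j(X)$ to $UYU^*\le X$, the identification $PXP=f(\bE_P(Z))$ in the corner algebra, and the final assembly with $U=U_1$, $V=U_1R$.

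One caveat is worth recording. Your splitting step assumes a point $c\in\Omega$ where $f$ passes from nondecreasing to nonincreasing. This exists whenever $f$ is continuous (and concave functions are automatically continuous on the interior of $\Omega$), but a concave $f$ can jump down at an included endpoint --- e.g. $f(t)=t$ on $[0,1)$, $f(1)=0$ is concave on $[0,1]$ yet admits no such $c$ --- and the theorem as stated allows this. The repair is cheap and stays entirely inside your framework: since $W=\bE(Z)$ has finite spectrum $\mu_1<\cdots<\mu_r$, take $s$ maximal such that $f$ is nondecreasing on $[\mu_1,\mu_s]$; concavity (a single strict decrease propagates to the right: $f(x)>f(y)$ with $x<y$ forces $f$ strictly decreasing on $[y,\infty)\cap\Omega$) then makes $f$ nonincreasing on $[\mu_{s+1},\mu_r]$, and these two intervals contain the numerical ranges of your two blocks, which is all your min--max step uses. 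With that adjustment the proof is complete in full generality.
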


A proof of Theorem 1.1 can be found in  \cite{B1} and \cite{BL}. If  $0\in \Omega$ and
$f(0)\ge0$, then Theorem \ref{T-1.1} holds also for sub-unital maps as
\eqref{Choi} and \eqref{HP} do so.

\section{Jensen inequalities for power means}

In this paper, $\bE$ denotes a unital (or sub-unital) positive linear map between two
matrix algebras $\bM_n$ and $\bM_m$. Here, $\bE$ is sub-unital if $\bE(I)\le I$, where
$I$ denotes the identity of any matrix algebra. We aim to extend the fundamental
inequality (1.1) to the maps on $\bM_n^+$ defined for $p\in(0,1]$ by
$$
\bE_p(Z):=\bE^{1/p}(Z^p).
$$
For the limit case $p=0$ one can define
\begin{equation*}
\bE_0(Z):=\lim_{p\searrow0}\bE_p(Z)=\exp\bE(\log Z)
\end{equation*}
as long as $\bE$ is unital and $Z\in\bM_n^+$ is invertible. 
Indeed, under these
assumptions, $\bE_p(Z)$ is also invertible and

\begin{align*}
\log\bE_p(Z)&=\frac{1}{p}\log\bE(I+p\log Z+o(p)) \\
&=\frac{1}{p}\log(I+p\bE(\log Z)+o(p))\longrightarrow\bE(\log Z)
\quad\mbox{as $p\searrow0$}.
\end{align*}

Thus, considering $\bE$ as a kind of arithmetic mean and \eqref{Choi} as the corresponding
Jensen inequality, we are looking for analogous Jensen type inequalities for the $p$th
power map $\bE_p$ with $p\in(0,1]$.
The assumption of operator concavity is not relevant to this purpose and inequalities for
the order relation in $\bM^+_m$ are not possible even for a function such as
$f(t)=\sqrt{t}$. However, with a reasonable concavity assumption, some meaningful
eigenvalue estimates hold. Our assumption is the doubly concavity of $f(t)$. We will say
that a function $f(t)$ is {\it doubly concave} if:
\begin{itemize}
\item[1.] $f(t)$ is a non-negative continuous function defined on a positive interval
$\Omega\subset[0,\infty)$,
\item[2.] $f(t)$ is  concave in the usual sense,
\item[3.] $f(t)$ is geometrically concave, i.e., $f(\sqrt{xy})\ge \sqrt{f(x)f(y)}$ for
all $x,y\in\Omega$.
\end{itemize}

\vskip 5pt\noindent
 If $f(t)$ and $g(t)$ are doubly concave on $\Omega$, then so is their geometric mean
$f^{\alpha}(t)g^{1-\alpha}(t)$ for $\alpha\in[0,1]$ and their minimum
$\min\{f(t),g(t)\}$. These properties with the following examples show that there are a
lot of doubly concave functions.

\begin{example}
Of course, the most important examples of doubly concave functions on
$\Omega=[0,\infty)$ are $t\mapsto t^q$ with exponent $q\in[0,1]$. Other simple examples
are $t\mapsto t/(t+1)$, $t\mapsto t/\sqrt{t+1}$ and $t\mapsto 1-e^{-t}$. However,
$\log(1+t)$ is not doubly concave on $[0,\infty)$.
\end{example}

\begin{example} 
On $\Omega=[1,\infty)$, the functions $\log t$ and $(t-1)^p$ for $p\in[0,1]$ are doubly
concave. For $q\ge 1$, the function $(t^q-1)^{1/q}$ is also doubly concave on
$[1,\infty)$.
\end{example}

\begin{example} 
On $\Omega=[0,1]$, the functions $ t(t-1)$ and $-t\log t$ are doubly concave, as well as
the function $\sqrt{1-t^2}$.
\end{example}

\begin{example}
The function $\sin t$ is doubly concave on $[0,\pi]$ and the function $\cos t$ is doubly
concave on $[0,\pi/2]$. More generally, for $\alpha,\beta \ge 0$ such that
$\alpha +\beta\le 1$, the function $\sin^{\alpha}t\cos^{\beta}t$ is doubly concave on 
$[0,\pi/2]$, as well as the function $\min\{\sin t, \cos t\}$.
\end{example}

\begin{example}
Let $\alpha>0$. The function $\alpha-|t-\alpha|$ is doubly concave on $[0,2\alpha]$.
More generally, let $0<\alpha_1\le \alpha_2<\beta$ and define a piecewise linear
function by $f(0)=f(\beta)=0$, $f(\alpha_1)=f(\alpha_2)>0$ and by the
condition that $f(t)$ is linear on each interval $[0,\alpha_1]$, $[\alpha_1,\alpha_2]$
and $[\alpha_2,\beta]$. Then $f(t)$ is doubly concave on $[0,\beta]$.
\end{example}

Our last example is of a rather general nature and is a straightforward consequence of
the arithmetic-geometric mean inequality.

\begin{example}
All non-negative, non-increasing, continuous concave functions defined on an interval
$[0,\beta]$ are doubly concave.
\end{example}

We have the following Jensen inequalities for power means associated to a unital
positive linear map $\bE:\bM_n\to\bM_m$. The unitality assumption can be relaxed to
sub-unitality. 

\begin{theorem}\label{T-2.7}
Let $\bE:\bM_n\to\bM_m$ be a sub-unital positive linear map. If $f(t)$ is a doubly
concave function on  $\Omega$, $Z\in\bM_n\{\Omega\}$, and $p\in(0,1]$, then
\begin{equation*}
f(\bE_p(Z)) \prec^{w(\log)} \bE_p(f(Z)).
\end{equation*}
If furthermore $f(t)$ is monotone, then, for some unitary $V\in \bM_m$,
\begin{equation*}
f(\bE_p(Z)) \ge V\bE_p(f(Z))V^*.
\end{equation*}

Moreover, the above assertions hold for $p=0$ too when $\bE$ is unital and both $Z$ and
$f(Z)$ are invertible.
\end{theorem}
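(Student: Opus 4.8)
The plan is to push everything back to the arithmetic case $p=1$ and to Theorem~\ref{T-1.1}. The case $p=0$ will follow from $p\in(0,1]$ by letting $p\searrow0$: since $\bE_p(Z)\to\exp\bE(\log Z)$ and $\bE_p(f(Z))\to\exp\bE(\log f(Z))$, the log-supermajorization passes to the limit, and so does the operator inequality once one extracts a convergent subsequence of the unitaries $V=V_p$ (the unitary group being compact). So fix $p\in(0,1]$, set $W:=Z^p\in\bM_n\{\Omega^p\}$ and $h(s):=f(s^{1/p})^p$ on $\Omega^p$. A direct computation gives $f(\bE_p(Z))=h(\bE(W))^{1/p}$ and $\bE_p(f(Z))=\bE(h(W))^{1/p}$. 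Since $x\mapsto x^{1/p}$ merely rescales every log-eigenvalue by the fixed factor $1/p$, it preserves both $\prec^{w(\log)}$ and the individual eigenvalues $\lambda_j(\cdot)$; hence it is enough to prove $h(\bE(W))\prec^{w(\log)}\bE(h(W))$, i.e.\ the $p=1$ statement with $h$ in place of $f$.

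Everything therefore rests on the \emph{Key Lemma}: if $f$ is doubly concave then so is $h$. Nonnegativity is clear and geometric concavity transfers at once, because $u\mapsto\log h(e^u)=p\log f(e^{u/p})$ is concave whenever $u\mapsto\log f(e^u)$ is. The real point --- and the main obstacle --- is ordinary concavity of $h$. Writing $x=s^{1/p}$, one finds that $h''(s)$ has the sign of $B:=xff''+(1-p)f'(f-xf')$. The term $xff''$ is $\le0$ by concavity and $f\ge0$; the second term is also $\le0$ \emph{except} in the regime $0\le xf'\le f$, and it is exactly there that geometric concavity is consumed: its differential form $xff''\le f'(xf'-f)$ gives $B\le pf'(xf'-f)\le0$. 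Thus $B\le0$ throughout and $h$ is concave. (When $\bE$ is only sub-unital we invoke the sub-unital form of Theorem~\ref{T-1.1}, legitimate since $h\ge0$ supplies $h(0)\ge0$.)

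With $h$ concave the $p=1$ statement $g(\bE(W))\prec^{w(\log)}\bE(g(W))$ (write $g=h$) is obtained in two steps. For $k=n$ I would apply Theorem~\ref{T-1.1} to the concave $g$ to get $g(\bE(W))\ge\frac12\bigl(U\bE(g(W))U^*+V\bE(g(W))V^*\bigr)$, take determinants, and finish with Minkowski's determinant inequality together with unitary invariance: $\det\bigl(\frac{X+Y}2\bigr)\ge\sqrt{\det X\det Y}=\det\bE(g(W))$ when $X,Y$ are unitary conjugates of $\bE(g(W))$. This yields $\det g(\bE(W))\ge\det\bE(g(W))$. For general $k$ I would let $M$ be the $\bE(W)$-invariant subspace spanned by the eigenvectors realizing the $k$ smallest eigenvalues of $g(\bE(W))$; then $\bE_M:X\mapsto P_M\bE(X)P_M$ is a (sub)unital positive map $\bM_n\to\bM_k$, and the $k=n$ step applied to $\bE_M$ gives $\prod_{j=1}^k\lambda_{n+1-j}(g(\bE(W)))=\det g(\bE_M(W))\ge\det\bE_M(g(W))$. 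Finally $\det\bE_M(g(W))=\det\bigl(P_M\bE(g(W))P_M\bigr)\ge\prod_{j=1}^k\lambda_{n+1-j}(\bE(g(W)))$, since the determinant of any $k$-dimensional compression of a positive matrix dominates the product of its $k$ smallest eigenvalues. Chaining these inequalities gives the log-supermajorization.

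For the monotone case, $h$ is monotone in the same sense as $f$, so the $U=V$ part of Theorem~\ref{T-1.1} applied at $p=1$ gives $h(\bE(W))\ge V_0\bE(h(W))V_0^*$, that is $A^p\ge V_0B^pV_0^*$ with $A:=f(\bE_p(Z))$, $B:=\bE_p(f(Z))$. Here one cannot simply take $1/p$-th powers (that map is not operator monotone); instead I extract eigenvalues: $\lambda_j(A)^p=\lambda_j(A^p)\ge\lambda_j(V_0B^pV_0^*)=\lambda_j(B)^p$, whence $\lambda_j(A)\ge\lambda_j(B)$ for every $j$. Eigenvalue domination then produces a single unitary $V$ with $A\ge VBV^*$, which is precisely $f(\bE_p(Z))\ge V\bE_p(f(Z))V^*$. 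The delicate ingredient in the whole argument remains the concavity half of the Key Lemma, where the two concavity hypotheses on $f$ must be played against each other.
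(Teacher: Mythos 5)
Your proposal is correct in substance and follows the paper's own skeleton: the same substitution $W=Z^p$, $h(s)=f(s^{1/p})^p$, the same Key Lemma (the paper's Lemma \ref{L-2.10}), an application of Theorem \ref{T-1.1} to $h$, the same eigenvalue-domination argument in the monotone case, and the same limiting argument for $p=0$. Where you genuinely diverge is in extracting $\prec^{w(\log)}$ from Theorem \ref{T-1.1}. The paper first gets the supermajorization $h(\bE(W))\prec^{w}\bE(h(W))$ from the two-unitary average (superadditivity of the sums of the $k$ smallest eigenvalues) and then invokes the general fact that $A\prec^{w}B$ implies $A\prec^{w(\log)}B$ on $\bM_n^+$ (apply the increasing convex function $-\log(-x)$ to $-A\prec_w-B$). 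Your route --- the full determinant via Minkowski plus AM--GM, then compression $\bE_M(X)=P_M\bE(X)P_M$ onto the $k$-dimensional $\bE(W)$-invariant subspace carrying the $k$ smallest eigenvalues of $h(\bE(W))$, and Cauchy interlacing to get $\det\bigl(P_M\bE(h(W))P_M|_M\bigr)\ge\prod_{j=1}^k\lambda_{n+1-j}(\bE(h(W)))$ --- is a valid alternative mechanism; it is more hands-on, avoids the majorization-to-log-majorization lemma, and has the pleasant feature that $\bE_M$ is unital whenever $\bE$ is, at the price of re-running Theorem \ref{T-1.1} once for each $k$.

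One caveat deserves to be named: your proof of the Key Lemma tacitly assumes $f$ is twice differentiable. The interplay you identify is exactly right --- $h''$ has the sign of $B=xff''+(1-p)f'(f-xf')$, and in the only dangerous regime $0\le xf'\le f$ the differential form $xff''\le f'(xf'-f)$ of geometric concavity yields $B\le pf'(xf'-f)\le0$ --- but a concave function need not be twice (or even once) differentiable, and the repair is not a routine mollification, since convolution smoothing preserves concavity but not obviously geometric concavity. The paper sidesteps this by phrasing everything with right derivatives: geometric concavity is equivalent to $xf_+'(x)/f(x)$ being non-increasing, concavity to $f_+'(x)$ being non-increasing, and then $x^{1-p}f^{p-1}(x)f_+'(x)$, written as $\pm\{\pm xf_+'(x)/f(x)\}^{1-p}\{\pm f_+'(x)\}^p$ according to the sign of $f_+'$, is non-increasing, which is precisely the concavity of $h$. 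Running your computation with one-sided derivatives in this way closes the gap. Finally, note that both your parenthetical and the paper leave the same small technicality for genuinely sub-unital $\bE$: the sub-unital form of Theorem \ref{T-1.1} is asserted only when $0$ lies in the domain interval, and $0\in\Omega^p$ may fail.
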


If $\Omega$ is an unbounded interval of $[0,\infty)$, a non-negative concave function on
$\Omega$ is automatically non-decreasing, so that the second stronger estimate holds.
The following is the special case for the power means \eqref{p-means}. Note that the
$p=0$ case of \eqref{p-means} is
\begin{equation}\label{F-2.1}
A\,\beta_0\,B:=\lim_{p\searrow0}A\,\beta_p\,B=\exp\biggl(\frac{\log A+\log B}{2}\biggr)
\end{equation}
for invertible $A,B\in\bM_n^+$.

\vskip 5pt
\begin{cor}\label{C-2.8}
Let $f(t)$ be a doubly concave function on  $[0,\infty)$,  let $A,B\in\bM_n^+$ and
$0< p\le 1$. Then,
\begin{equation*}
f(A\,\beta_p\,B) \ge V\{f(A)\,\beta_p\,f(B)\}V^*
\end{equation*}
for some unitary $V\in \bM_n$. Moreover, this holds for $p=0$ too when $f(t)$ is not
identically zero and $A,B$ are invertible.
\end{cor}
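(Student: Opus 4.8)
The plan is to recognize the power mean $\beta_p$ as the $p$th power map $\bE_p$ attached to a single, explicit block map, and then simply quote Theorem~\ref{T-2.7}. Concretely, I would work on $\bM_{2n}$ with $m=n$ and introduce the map $\bE:\bM_{2n}\to\bM_n$ sending a $2\times2$ operator matrix to the average of its diagonal blocks,
\[
\bE\begin{pmatrix} X & * \\ * & Y\end{pmatrix}:=\frac{X+Y}{2}.
\]
This $\bE$ is unital, since $\bE(I_{2n})=I_n$, and positive: writing $C_1=\tfrac{1}{\sqrt2}\begin{pmatrix} I_n \\ 0\end{pmatrix}$ and $C_2=\tfrac{1}{\sqrt2}\begin{pmatrix} 0 \\ I_n\end{pmatrix}$, one has $\bE(W)=C_1^*WC_1+C_2^*WC_2$ with $C_1^*C_1+C_2^*C_2=I_n$.

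First I would substitute the block-diagonal $Z:=\diag(A,B)\in\bM_{2n}^+$. Since $Z^p=\diag(A^p,B^p)$, a one-line computation gives $\bE(Z^p)=\tfrac12(A^p+B^p)$ and hence $\bE_p(Z)=\bE^{1/p}(Z^p)=A\,\beta_p\,B$; likewise $f(Z)=\diag(f(A),f(B))$ yields $\bE_p(f(Z))=f(A)\,\beta_p\,f(B)$. Thus the asserted inequality is exactly the conclusion of Theorem~\ref{T-2.7} for this particular $\bE$, $Z$ and $f$, once we are allowed to use its stronger, single-unitary form.

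The one point deserving attention is the activation of that stronger form, which requires $f$ to be monotone. Here $f$ is doubly concave on the \emph{unbounded} interval $\Omega=[0,\infty)$, and as remarked right after Theorem~\ref{T-2.7} a non-negative concave function on such an interval is automatically non-decreasing; so the monotonicity hypothesis comes for free and we obtain $f(A\,\beta_p\,B)\ge V\{f(A)\,\beta_p\,f(B)\}V^*$ for a single unitary $V\in\bM_n$.

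For the limit case $p=0$ I would invoke the last clause of Theorem~\ref{T-2.7}, which demands $\bE$ unital (true) together with invertibility of both $Z$ and $f(Z)$. Invertibility of $Z=\diag(A,B)$ is precisely the assumption that $A,B$ are invertible, while $f(Z)=\diag(f(A),f(B))$ forces the only genuine verification: that $f>0$ on $(0,\infty)$. I expect this to be the (mild) crux, and it follows from concavity alone --- if a non-negative concave $f$ vanished at some $s>0$, then for all $a\in[0,s)$ and $b>s$ the concavity estimate $0=f(s)\ge\frac{b-s}{b-a}f(a)+\frac{s-a}{b-a}f(b)\ge0$ would force $f(a)=f(b)=0$ and hence $f\equiv0$, contradicting that $f$ is not identically zero. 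Since $A,B$ invertible means their spectra lie in $(0,\infty)$, we get $f(A),f(B)>0$, so $f(Z)$ is invertible and the $p=0$ case follows as well.
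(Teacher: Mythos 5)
Your proof is correct and follows exactly the route the paper takes: apply Theorem \ref{T-2.7} to $Z=A\oplus B$ and the block map $\bE:\bM_{2n}\to\bM_n$ averaging the diagonal blocks, using the automatic monotonicity of non-negative concave functions on $[0,\infty)$ for the single-unitary form, and the positivity of $f$ on $(0,\infty)$ (when $f\not\equiv0$) for the $p=0$ case. The details you fill in (positivity and unitality of $\bE$, and the concavity argument showing $f>0$ on $(0,\infty)$) are precisely the verifications the paper leaves implicit or states without proof.
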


\vskip 5pt
The corollary follows by applying Theorem \ref{T-2.7} to $Z=A\oplus B$  and
$\bE:\bM_{2n}\to\bM_n$, 
$$
\bE\biggl(\begin{bmatrix} A&X\\ Y&B\end{bmatrix}\biggr):=\frac{A+B}{2}.
$$
Note that, except the trivial case $f\equiv0$, $f(t)>0$ for all $t\in(0,\infty)$ and
hence $f(A)$ is invertible whenever so is $A$.

It is not possible to delete the unitary $V$ in Corollary \ref{C-2.8}, even for a doubly
concave and operator concave function. For instance, if $f(t)=t^{1/3}$ and $p=1/3$ then
we cannot have $(A\,\beta_{1/3}\,B)^{1/3}\ge A^{1/3}\,\beta_{1/3}\,B^{1/3}$, since it
would imply that $t\to t^3$ is operator convex, a contradiction.

Another special case of Theorem \ref{T-2.7} deals with the Schur product $X\circ Y$ of
$\bM_n$ (the entrywise product of $X$ and $Y$). This follows from the fact that
$Z\mapsto A\circ Z$ is a positive and sub-unital linear map  when $A\in\bM_n^+$ has
diagonal entries less than or equal to $1$.

\begin{cor}
Let $f(t)$ be a doubly concave function on  $[0,\infty)$, let $A,Z\in\bM_n^+$ and
$0< p\le 1$. Assume that the diagonal entries of $A$ are all less than or equal to $1$.
Then,
\begin{equation*}
f(\{A\circ Z^p\}^{1/p}) \ge V\{A\circ (f(Z))^p\}^{1/p}V^*
\end{equation*}
for some unitary $V\in \bM_n$.
\end{cor}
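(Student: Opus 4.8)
The plan is to realize the left-hand side as $f(\bE_p(Z))$ for a suitable sub-unital positive linear map and then invoke Theorem \ref{T-2.7} in its stronger (monotone) form.

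First I would take $\bE:\bM_n\to\bM_n$ to be the Schur multiplication map $\bE(X):=A\circ X$. The two structural properties required by Theorem \ref{T-2.7} are positivity and sub-unitality. Positivity is exactly the classical Schur product theorem: the Hadamard product of two positive semidefinite matrices is positive semidefinite, so $X\ge0$ forces $A\circ X\ge0$. For sub-unitality I would compute $\bE(I)=A\circ I=\diag(a_{11},\dots,a_{nn})$, and the hypothesis that each diagonal entry satisfies $a_{ii}\le1$ gives $\bE(I)\le I$. Thus $\bE$ is a sub-unital positive linear map, as asserted in the remark preceding the statement.

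With this choice, the associated $p$th power map is $\bE_p(X)=\bE^{1/p}(X^p)=\{A\circ X^p\}^{1/p}$, so that $f(\bE_p(Z))=f(\{A\circ Z^p\}^{1/p})$ is precisely the left-hand side, while $\bE_p(f(Z))=\{A\circ(f(Z))^p\}^{1/p}$ is the bracketed term on the right. Since $\Omega=[0,\infty)$ is unbounded, the non-negative concave function $f$ is automatically non-decreasing, hence monotone; this is the observation that lets me use the stronger conclusion of Theorem \ref{T-2.7}, namely the operator inequality with a single unitary conjugation, rather than only the log-supermajorization. Applying that conclusion to $Z\in\bM_n\{[0,\infty)\}=\bM_n^+$ yields $f(\bE_p(Z))\ge V\bE_p(f(Z))V^*$ for some unitary $V\in\bM_n$, which is exactly the claimed inequality.

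There is no genuine obstacle here, as the corollary is a direct specialization of Theorem \ref{T-2.7}. The only points to check are the two structural properties of the Schur map together with the monotonicity remark. The mild care needed is to confirm that the diagonal-entry condition on $A$ is precisely what encodes sub-unitality, and that $f$ being doubly concave on an unbounded $\Omega$ forces the monotone case, so that the stronger operator inequality, and not merely the majorization, is the one available.
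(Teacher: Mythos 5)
Your proof is correct and follows exactly the paper's intended argument: the paper likewise obtains this corollary by applying Theorem \ref{T-2.7} to the Schur multiplication map $Z\mapsto A\circ Z$, which is positive and sub-unital precisely because $A\in\bM_n^+$ has diagonal entries at most $1$, and uses the fact that a non-negative concave function on the unbounded interval $[0,\infty)$ is automatically non-decreasing to invoke the stronger monotone form of the theorem. Nothing is missing.
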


\vskip 5pt
We turn to the proof of the theorem. For this we first give a lemma.

\begin{lemma}\label{L-2.10}
If $f(t)$ is a doubly concave function on $\Omega$ and $p\in(0,1]$, then $f^p(t^{1/p})$
is concave on $\Omega^p:=\{t^p:t\in\Omega\}$.
\end{lemma}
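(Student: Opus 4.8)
The plan is to prove concavity of $g(s):=f^p(s^{1/p})$ by controlling the sign of its second derivative, after first reducing to a smooth $f$. We may assume $f\not\equiv0$, so that $f>0$ on the interior of $\Omega$ (a non-negative concave function cannot vanish at an interior point without being identically zero), and we work on the interior, extending to the endpoints of $\Omega^p$ by continuity at the very end. Writing $s=t^p$, so that $t=s^{1/p}$ and $g(t^p)=f^p(t)$, a direct differentiation gives $g'(s)=f^{p-1}(t)f'(t)\,t^{1-p}$ and then shows that $g''(s)$ equals, up to a strictly positive factor, the quantity
$$E(t):=t\,f(t)f''(t)+(1-p)\,f'(t)\bigl(f(t)-t f'(t)\bigr).$$
Thus everything reduces to proving $E\le0$, and the whole point is to read $E$ as a convex combination of the two concavities built into the hypothesis.

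The key step is the algebraic identity
$$E(t)=p\bigl(t f f''\bigr)+(1-p)\bigl(t f f''+f f'-t (f')^2\bigr).$$
The first bracket is $\le0$ because $f$ is concave ($f''\le0$) with $t,f\ge0$; this is the endpoint $p=1$, i.e.\ ordinary concavity. The second bracket is exactly the differential form of geometric concavity: setting $L(u):=\log f(e^u)$, one computes $L''(u)=t\,f^{-2}\bigl(t f f''+f f'-t(f')^2\bigr)$, so condition~3 of double concavity (equivalently, concavity of $L$) forces $t f f''+f f'-t(f')^2\le0$; this is the endpoint $p=0$. Since $p,1-p\ge0$, both terms are $\le0$, whence $E\le0$, $g''\le0$, and $g$ is concave. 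In effect the inequality $E\le0$ interpolates between ordinary concavity ($p=1$) and geometric concavity ($p=0$), which is precisely the content of double concavity.

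The one genuine difficulty is that $f$ is only assumed continuous, so the differentiation above is not literally available; I expect this, rather than the computation, to be the main obstacle. I would remove it by a smoothing that respects \emph{both} concavities simultaneously, namely mollifying in the multiplicative (logarithmic) variable. For a smooth, compactly supported, log-concave probability density $\rho_\varepsilon$ on $\bR$ concentrating at $0$, set $f_\varepsilon(t):=\int_{\bR} f(t e^{-\tau})\,\rho_\varepsilon(\tau)\,d\tau$ on a compact subinterval of the interior of $\Omega$. For each fixed $\tau$ the map $t\mapsto f(t e^{-\tau})$ is non-negative and concave, so $f_\varepsilon$ is non-negative, smooth, and concave; and since $u\mapsto f_\varepsilon(e^u)$ is the additive convolution of the log-concave function $u\mapsto f(e^u)$ with the log-concave kernel $\rho_\varepsilon$, it is again log-concave, i.e.\ $f_\varepsilon$ is geometrically concave. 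Hence each $f_\varepsilon$ is smooth and doubly concave, the calculus argument applies to it, and letting $\varepsilon\to0$ (with $f_\varepsilon\to f$ uniformly on compact subsets and concavity stable under pointwise limits) yields concavity of $g$ on the interior, which continuity of $f$ then extends to all of $\Omega^p$. The insight making this clean is that multiplicative mollification preserves ordinary concavity while convolution preserves log-concavity, so both defining properties survive at once.
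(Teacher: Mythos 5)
Your proof is correct, but it runs along a genuinely different track from the paper's. The paper never smooths $f$: it works directly with right derivatives, which exist everywhere for concave functions, and reformulates everything as monotonicity statements. Concavity of $f$ gives $f_+'(x)$ non-increasing, geometric concavity gives $xf_+'(x)/f(x)$ non-increasing, and concavity of $h(t)=f^p(t^{1/p})$ is equivalent to $x^{1-p}f^{p-1}(x)f_+'(x)$ being non-increasing; the paper then observes the \emph{multiplicative} identity
$$
x^{1-p}f^{p-1}(x)f_+'(x)=\pm\bigl\{\pm xf_+'(x)/f(x)\bigr\}^{1-p}\bigl\{\pm f_+'(x)\bigr\}^p
$$
(with signs according to the sign of $f_+'$), so the target quantity is a weighted geometric mean of the two given non-increasing quantities and is therefore non-increasing. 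Your argument instead interpolates \emph{additively} at the level of second derivatives, writing $E=p\,(tff'')+(1-p)\,(tff''+ff'-t(f')^2)$ as a convex combination of the two concavity conditions; this is conceptually very transparent (the statement visibly interpolates between $p=1$ and $p=0$), and your computation of $g''$ and of $L''$ is correct. The price you pay is the regularization step: your multiplicative mollification does preserve both properties as you claim, but the assertion that the additive convolution of two log-concave functions is log-concave is not elementary --- it is Pr\'ekopa's theorem (equivalently, on $\bR$, Ibragimov's strong unimodality theorem) --- so your proof imports a genuinely non-trivial external result, and it would be worth citing it explicitly rather than stating it in passing. The paper's use of one-sided derivatives is precisely the device that makes any smoothing (and hence any such import) unnecessary, at the cost of a slightly less intuitive sign-splitting identity and a silent check that the expression stays non-increasing across the point where $f_+'$ changes sign. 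Both proofs are sound; yours is cleaner to motivate, the paper's is more self-contained.
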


\begin{proof}
We may assume that $\Omega$ is an open interval. Then we can further assume that $f(x)$
is strictly positive on $\Omega$; otherwise $f(x)$ must be identically zero. The
concavity of $f$ on $\Omega$ means that the right derivative $f_+'(x)$ is non-increasing
on $\Omega$. The geometric concavity of $f(x)$ is equivalent to the concavity of
$g(t):=\log f(e^t)$ on $\log\Omega:=\{\log x:x\in\Omega\}$. Notice that the
right derivative of $g(t)$ is $g_+'(t)=e^tf_+'(e^t)/f(e^t)$. In fact, this is seen by
taking the limit as $\delta\searrow0$ of
$$
\frac{g(t+\delta)-g(t)}{\delta}=\frac{e^{t+\delta}-e^t}{\delta}
\cdot\frac{f(e^{t+\delta})-f(e^t)}{e^{t+\delta}-e^t}
\cdot\frac{\log f(e^{t+\delta})-\log f(e^t)}{f(e^{t+\delta})-f(e^t)},
$$
where the above last term can be replaced with $1/f(e^t)$ if $f(e^{t+\delta})=f(e^t)$.
Hence it follows that $xf_+'(x)/f(x)$ is non-increasing on $\Omega$. Next, consider the
function $h(t):=f^p(t^{1/p})$ on $\Omega^p$. By a similar argument, we notice that the
right derivative of $h(t)$ is $h_+'(t)=t^{\frac{1}{p}-1}f^{p-1}(t^{1/p})f_+'(t^{1/p})$.
Thus, the concavity of $h(t)$ on $\Omega^p$ is equivalent to that
$x^{1-p}f^{p-1}(x)f_+'(x)$ is non-increasing on $\Omega$. Since
$$
x^{1-p}f^{p-1}(x)f_+'(x)=\begin{cases}
\{xf_+'(x)/f(x)\}^{1-p}\{f_+'(x)\}^p & \text{if $f_+'(x)\ge0$}, \\
-\{-xf_+'(x)/f(x)\}^{1-p}\{-f_+'(x)\}^p
& \text{if $f_+'(x)\le0$},
\end{cases}
$$
this indeed holds.
\end{proof}

\noindent
{\it Proof of Theorem \ref{T-2.7}.}\enspace
Assume that $0<p\le1$. For any $Z\in\bM_n\{\Omega\}$ let $X:=Z^p\in\bM_n\{\Omega^p\}$. By
Lemma \ref{L-2.10} we can apply Theorem \ref{T-1.1} to the function $f^p(t^{1/p})$ so that
we have 
\begin{equation*}
f^p(\bE^{1/p}(X)) \ge \frac{U\bE(f^p(X^{1/p}))U^*+V\bE(f^p(X^{1/p}))V^*}{2}
\end{equation*}
for some unitaries $U,\, V$. We thus obtain
\begin{equation}\label{F-2.2}
f^p(\bE^{1/p}(Z^p)) \ge \frac{U\bE(f^p(Z))U^*+V\bE(f^p(Z))V^*}{2},
\end{equation}
which yields the supermajorization
\begin{equation}\label{F-2.3}
f^p(\bE^{1/p}(Z^p)) \prec^w \bE(f^p(Z)).
\end{equation}
Here, we notice that if $A,B\in\bM_n^+$ and $A\prec^wB$, then
$A\prec^{w(\log)}B$. Indeed, to see this we may assume that $A,B$ are invertible.
The increasing convex function $-\log(-x)$ on $(-\infty,0)$ is applied to $-A\prec_w-B$
(which is equivalent to $A\prec^wB$) so that we have $-\log A\prec_w-\log B$. This means
that $A\prec^{w(\log)}B$.
Therefore, \eqref{F-2.3} entails the log-supermajorization
\begin{equation*}
f^p(\bE^{1/p}(Z^p)) \prec^{w(\log)} \bE(f^p(Z)),
\end{equation*}
which is equivalent to
\begin{equation*}
f(\bE^{1/p}(Z^p)) \prec^{w(\log)} \bE^{1/p}(f^p(Z)).
\end{equation*}
This proves the first assertion of the theorem. In case of an additional monotony assumption
on $f(t)$, we have $U=V$ in \eqref{F-2.2} so that
\begin{equation*}
f^p(\bE^{1/p}(Z^p)) \ge U\bE(f^p(Z))U^*.
\end{equation*}
Since $t\mapsto t^{1/p}$ is increasing, it follows that
\begin{equation*}
f(\bE^{1/p}(Z^p)) \ge V\bE^{1/p}(f^p(Z))V^*
\end{equation*}
for some unitary $V$. This proves the second assertion. The last assertion for the case
$p=0$ is immediately seen by taking the limit as $p\searrow0$ of the above estimates.\qed

\bigskip
As another consequence of Theorem \ref{T-2.7} (or Corollary \ref{C-2.8}) we have the
following determinantal inequality. The proof of a more general result will be given in
Section 4, Proposition \ref{P-4.12}. Note that $a\,\beta_0\,b$ is defined for all scalars
$a,b\ge0$ in such a way that $a\,\beta_0\,b=0$ if $a=0$ or $b=0$.

\begin{cor}\label{C-2.11}
Let $f(t)$ be a doubly concave function on  $\Omega$, let $A,B\in\bM_n\{\Omega\}$ and
$0< p\le 1$. Then
\begin{equation*}
{\det}^{1/n} f(A\,\beta_p\,B) \, \ge
\, \{{\det}^{1/n}f(A)\}\, \beta_p\, \{{\det}^{1/n} f(B) \}.
\end{equation*}
Moreover, this holds for $p=0$ too when $A,B$ are invertible.
\end{cor}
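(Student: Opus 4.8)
The plan is to deduce this from the log-supermajorization of Theorem \ref{T-2.7} combined with the classical Minkowski determinantal inequality. First I would specialize Theorem \ref{T-2.7} exactly as in Corollary \ref{C-2.8}: take the unital positive linear map $\bE:\bM_{2n}\to\bM_n$ sending a $2\times2$ block matrix with diagonal blocks $A,B$ to $(A+B)/2$, and apply it to $Z=A\oplus B\in\bM_{2n}\{\Omega\}$. Since $\bE_p(Z)=\bE^{1/p}(Z^p)=((A^p+B^p)/2)^{1/p}=A\,\beta_p\,B$ while $f(Z)=f(A)\oplus f(B)$ gives $\bE_p(f(Z))=f(A)\,\beta_p\,f(B)$, the first assertion of Theorem \ref{T-2.7} yields
$$
f(A\,\beta_p\,B)\prec^{w(\log)}f(A)\,\beta_p\,f(B).
$$
Note that no monotonicity of $f$ is needed, since I only want the endpoint $k=n$ of the log-supermajorization, namely the product over all eigenvalues, which is precisely the determinant comparison
$$
\det f(A\,\beta_p\,B)\ge\det\bigl(f(A)\,\beta_p\,f(B)\bigr),
$$
and hence the same inequality for ${\det}^{1/n}$.

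It then remains to show that ${\det}^{1/n}$ decouples the right-hand matrix $\beta_p$-mean into the scalar $\beta_p$-mean of the determinants, and this is where the Minkowski inequality enters. Writing $X:=f(A)$, $Y:=f(B)\in\bM_n^+$ and using $\det(M^{1/p})=(\det M)^{1/p}$, I have ${\det}^{1/n}(X\,\beta_p\,Y)=\bigl({\det}^{1/n}((X^p+Y^p)/2)\bigr)^{1/p}$. Applying the superadditivity and positive homogeneity of ${\det}^{1/n}$ (the Minkowski determinantal inequality) to $\frac12X^p+\frac12Y^p$, together with the identity ${\det}^{1/n}(M^p)=({\det}^{1/n}M)^p$, gives
$$
{\det}^{1/n}\Bigl(\frac{X^p+Y^p}{2}\Bigr)\ge\frac{({\det}^{1/n}X)^p+({\det}^{1/n}Y)^p}{2}.
$$
Raising to the (increasing) power $1/p$ and recombining yields ${\det}^{1/n}(X\,\beta_p\,Y)\ge({\det}^{1/n}X)\,\beta_p\,({\det}^{1/n}Y)$. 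Chaining this with the determinant comparison above gives the claimed inequality for $0<p\le1$.

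For the endpoint $p=0$ I would pass to the limit $p\searrow0$. Assuming $A,B$ invertible (so that $X=f(A)$, $Y=f(B)$ are invertible, since $f>0$ on the interior of $\Omega$ unless $f\equiv0$), the $p=0$ case of Theorem \ref{T-2.7} supplies $\det f(A\,\beta_0\,B)\ge\det(f(A)\,\beta_0\,f(B))$. Here the Minkowski step is in fact an equality, because ${\det}(X\,\beta_0\,Y)=\exp\Tr\frac{\log X+\log Y}{2}=(\det X\,\det Y)^{1/2}$, whence ${\det}^{1/n}(X\,\beta_0\,Y)=({\det}^{1/n}X)\,\beta_0\,({\det}^{1/n}Y)$ exactly; alternatively the result follows by continuity from the $p>0$ estimates.

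I do not expect any single hard estimate to be the obstacle; the main content is structural. The crux is recognizing that the matrix-level log-supermajorization collapses at $k=n$ to a bare determinant inequality, and that the Minkowski inequality supplies \emph{exactly} the superadditivity of ${\det}^{1/n}$ that, in tandem with its behaviour under powers, converts ${\det}^{1/n}$ of the operator $\beta_p$-mean into the scalar $\beta_p$-mean of the determinants. Keeping careful track of the exponents $1/p$ and the homogeneity constants through this reduction is the only delicate bookkeeping.
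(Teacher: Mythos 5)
Your proof is correct, and its key step is genuinely different from the paper's. The paper does not actually prove Corollary \ref{C-2.11} in Section 2: it defers the proof to Proposition \ref{P-4.12}, the general inequality $\| f(A\,\beta_p\,B) \|_! \ge \|f(A)\|_!\,\beta_p\,\|f(B)\|_!$ for all \emph{derived} anti-norms, of which the corollary is a limiting special case (recovering ${\det}^{1/n}=\Delta_n$ as a limit of scaled negative Schatten anti-norms, as in \eqref{F-4.1}). Your first step coincides with the paper's: apply Theorem \ref{T-2.7} to $Z=A\oplus B$ and the block-averaging map $\bE$, as in Corollary \ref{C-2.8}, to get $f(A\,\beta_p\,B)\prec^{w(\log)}f(A)\,\beta_p\,f(B)$; you then correctly observe that only the $k=n$ term of the log-supermajorization is needed, which is the bare determinant comparison, and that no monotonicity of $f$ is required for this. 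The decoupling step is where you diverge. The paper proves $\|X\,\beta_p\,Y\|_!\ge\|X\|_!\,\beta_p\,\|Y\|_!$ for derived anti-norms by noting that $X\mapsto\|X^{1/p}\|_!^p$ is again a derived anti-norm and invoking superadditivity of derived anti-norms (Proposition \ref{P-4.6}), followed by the limit argument to reach ${\det}^{1/n}$. You instead exploit the identity $\det(M^{1/p})=(\det M)^{1/p}$, which is special to the determinant, to reduce the decoupling directly to the classical Minkowski inequality \eqref{F-3.3}. What your route buys is a short, self-contained proof using only Theorem \ref{T-2.7} and \eqref{F-3.3}, with none of the Section 4 anti-norm machinery; what the paper's route buys is the much more general Proposition \ref{P-4.12}.

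One small caveat concerns your primary argument for $p=0$. You invoke the $p=0$ case of Theorem \ref{T-2.7}, justifying invertibility of $f(A),f(B)$ by ``$f>0$ on the interior of $\Omega$ unless $f\equiv0$''. This justification is incomplete: invertibility of $A,B$ does not force their eigenvalues into the interior of $\Omega$, and a doubly concave $f$ may vanish at an endpoint (e.g.\ $f(t)=2-t$ on $\Omega=[1,2]$, with $A$ having eigenvalue $2$), making $f(A)$ singular. The gap is harmless for two independent reasons: in that degenerate case the right-hand side vanishes by the convention $a\,\beta_0\,b=0$ when $ab=0$, so the inequality is trivial; and your fallback, continuity from the case $0<p\le1$, is exactly the paper's own argument in Proposition \ref{P-4.12} and covers all cases at once.
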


Corollary \ref{C-2.11} for $p=1$ and $f(t)=t$ is Minkowski's inequality.

Next, we may define doubly convex functions in a similar way. A function $g(t)$ is
{\it doubly convex} if:
\begin{itemize}
\item[1.] $g(t)$ is a non-negative continuous function defined on a positive interval
$\Omega\subset[0,\infty)$,
\item[2.] $g(t)$ is convex,
\item[3.] $g(t)$ is geometrically convex, i.e., $g(\sqrt{xy})\le \sqrt{g(x)g(y)}$ for all
$x,y\in\Omega$.
\end{itemize}

\begin{example} 
Given real numbers $c_i\ge 0$ and $\alpha_i\in (-\infty,0]\cup[1,\infty)$, $i=1,\dots,n$,
the function $g(t):= \sum_{i=1}^n c_i t^{\alpha_i}$ is doubly convex on $(0,\infty)$.
\end{example}

Double convexity will be used in Section 4. This notion is not relevant to the following
convex version of Theorem \ref{T-2.7}. It suffices to use merely convex functions, but a
monotony assumption is necessary. 

\begin{prop}\label{P-2.13}
Let $\bE:\bM_n\to\bM_m$ be a sub-unital positive linear map. If $g(t)$ is a non-negative
convex function on $[0,\infty)$ with $g(0)=0$, $Z\in\bM_n^+$, and $q\ge 1$, then, for some
unitary $V\in \bM_m$,
\begin{equation*}
g(\bE_q(Z)) \le V\bE_q(g(Z))V^*.
\end{equation*}
If $\bE$ is unital, then the above estimate holds also for any decreasing, non-negative
convex function on $(0,\infty)$ and any invertible $Z\in\bM_n^+$.
\end{prop}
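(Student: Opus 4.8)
The plan is to mirror the proof of Theorem~\ref{T-2.7}, but to replace the concavity Lemma~\ref{L-2.10} by its convex counterpart and to run the single-unitary form of Theorem~\ref{T-1.1} in the reverse direction. Recalling $\bE_q(W)=\bE^{1/q}(W^q)$, I will first reduce the operator inequality to a pointwise one. Set $X:=Z^q\in\bM_n^+$ and define $h(t):=g(t^{1/q})^q$ on the relevant interval ($[0,\infty)$ in the first case, $(0,\infty)$ in the second). By the functional calculus $g(Z)^q=h(X)$ and $g(\bE_q(Z))=h(\bE(X))^{1/q}$, while $\bE_q(g(Z))=\bE(h(X))^{1/q}$. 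Since $t\mapsto t^{1/q}$ is operator monotone for $q\ge1$ and commutes with unitary conjugation, the asserted inequality $g(\bE_q(Z))\le V\bE_q(g(Z))V^*$ will follow once I establish the pointwise estimate
\begin{equation*}
h(\bE(X))\le U\bE(h(X))U^*
\end{equation*}
for some unitary $U$ (then one may take $V=U$).

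The key step is the convex analogue of Lemma~\ref{L-2.10}: for $q\ge1$ the function $h(t)=g(t^{1/q})^q$ is convex and monotone. Exactly as in Lemma~\ref{L-2.10}, I will compute the right derivative after the substitution $s=t^{1/q}$ and obtain $h_+'(t)=(g(s)/s)^{q-1}g_+'(s)$, so that convexity of $h$ is equivalent to $u(s):=(g(s)/s)^{q-1}g_+'(s)$ being non-decreasing. In the first case ($g$ convex, $g\ge0$, $g(0)=0$) the slope $g(s)/s$ is non-negative and non-decreasing while $g_+'(s)\ge0$ is non-decreasing; since $q-1\ge0$, $u$ is a product of non-negative non-decreasing functions, hence non-decreasing. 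The same hypotheses give $g_+'(0)\ge0$, whence $g$, and therefore $h$, is non-decreasing. In the second case ($g$ convex, decreasing, $\ge0$ on $(0,\infty)$) both $g(s)/s$ and $-g_+'(s)$ are non-negative and non-increasing, so $(g(s)/s)^{q-1}(-g_+'(s))$ is non-increasing and $u(s)=-(g(s)/s)^{q-1}(-g_+'(s))$ is again non-decreasing; here $h$ is non-increasing.

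With $h$ convex and monotone, I will apply Theorem~\ref{T-1.1} to the concave monotone function $-h$: it yields unitaries $U,V$ with $-h(\bE(X))\ge\frac12(U\bE(-h(X))U^*+V\bE(-h(X))V^*)$, and the monotony of $-h$ permits $U=V$, i.e. $h(\bE(X))\le U\bE(h(X))U^*$. In the first case $\bE$ is only sub-unital, but $0$ lies in the interval and $(-h)(0)=-g(0)^q=0\ge0$, so the sub-unital form of Theorem~\ref{T-1.1} noted after its statement applies. In the second case unitality is used to guarantee $\bE(X)\ge cI>0$ whenever $X\ge cI$, keeping its spectrum inside $(0,\infty)$ where $h$ lives. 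Combining with the operator monotone map $t\mapsto t^{1/q}$ as in the first paragraph completes the argument.

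The main obstacle I anticipate is the convexity of $h$: it does \emph{not} follow from a naive composition argument, since $h=\Phi\circ\psi$ with $\Phi(s)=g(s)^q$ convex but $\psi(t)=t^{1/q}$ concave, and a convex function of a concave argument need not be convex. The convexity must instead be read off from the monotonicity of $h_+'$, and the bookkeeping of signs and monotonicity directions is precisely what separates the two cases and constitutes the real content; the remainder is a transcription of the proof of Theorem~\ref{T-2.7}.
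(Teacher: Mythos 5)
Your proposal is correct and follows essentially the same route as the paper: establish convexity (and monotonicity) of $h(t)=g^q(t^{1/q})$ through the right-derivative formula $h_+'(t)=\{g(s)/s\}^{q-1}g_+'(s)$ with the same two-case monotonicity analysis, then apply the convex version of Theorem \ref{T-1.1} (your $-h$ formulation) and finish as in the proof of Theorem \ref{T-2.7}. The only cosmetic differences are that the paper first regularizes via $g(t)+\varepsilon t$ or $g(t)+\varepsilon$ to assume $g>0$ on $(0,\infty)$, whereas you handle possible vanishing of $g$ directly, and that you invoke operator monotonicity of $t\mapsto t^{1/q}$ where the paper uses only that this map is increasing; neither difference affects correctness.
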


We have statements, with reverse inequalities, similar to the previous corollaries for 
doubly concave functions. For instance:

\noindent
\begin{cor}\label{C-2.14}
Let $g(t)$ be a non-negative convex function on $[0,\infty)$ with $g(0)=0$,  
let $A,B\in\bM_n^+$ and $q\ge 1$. Then,
\begin{equation*}
g(A\,\beta_q\,B) \le V\{g(A)\,\beta_q\,g(B)\}V^*
\end{equation*}
for some unitary $V\in \bM_n$.
\end{cor}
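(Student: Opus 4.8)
The plan is to derive Corollary~\ref{C-2.14} from Proposition~\ref{P-2.13} by exactly the same block-matrix device that produced Corollary~\ref{C-2.8} from Theorem~\ref{T-2.7}. Concretely, I would apply Proposition~\ref{P-2.13} to the linear map $\bE:\bM_{2n}\to\bM_n$ defined by
$$
\bE\biggl(\begin{bmatrix} A&X\\ Y&B\end{bmatrix}\biggr):=\frac{A+B}{2},
$$
evaluating everything at $Z=A\oplus B\in\bM_{2n}^+$.

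First I would check that $\bE$ satisfies the hypotheses of Proposition~\ref{P-2.13}: linearity is immediate, positivity holds because the diagonal blocks of a positive semidefinite $2n\times 2n$ matrix are themselves positive semidefinite, and unitality follows since $\bE(I_{2n})=\tfrac12(I_n+I_n)=I_n$. Thus $\bE$ is a unital positive linear map, so in particular the stronger (unital) form of Proposition~\ref{P-2.13} is available if needed.

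Next I would unwind the two sides of the conclusion of Proposition~\ref{P-2.13} at $Z=A\oplus B$. Since functional calculus acts blockwise on a block-diagonal matrix, $Z^q=A^q\oplus B^q$, whence $\bE(Z^q)=\tfrac12(A^q+B^q)$ and
$$
\bE_q(Z)=\bE^{1/q}(Z^q)=\Bigl(\frac{A^q+B^q}{2}\Bigr)^{1/q}=A\,\beta_q\,B.
$$
Likewise $g(Z)=g(A)\oplus g(B)$, so that $\bE_q(g(Z))=g(A)\,\beta_q\,g(B)$. Substituting these two identities into the inequality $g(\bE_q(Z))\le V\bE_q(g(Z))V^*$ provided by Proposition~\ref{P-2.13} yields precisely $g(A\,\beta_q\,B)\le V\{g(A)\,\beta_q\,g(B)\}V^*$ for some unitary $V\in\bM_n$, which is the assertion.

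I do not expect a genuine obstacle here, as the corollary is a direct specialization; the only points deserving a word of care are the positivity and unitality of $\bE$ (checked above) and the fact that functional calculus commutes with the direct sum, both elementary. The remaining hypotheses on $g$ (non-negativity and $g(0)=0$) are simply inherited from Proposition~\ref{P-2.13}; non-negativity guarantees $g(A),g(B)\in\bM_n^+$ so that the mean $g(A)\,\beta_q\,g(B)$ is well defined.
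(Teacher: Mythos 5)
Your proposal is correct and is essentially the paper's own argument: the paper derives Corollary \ref{C-2.14} from Proposition \ref{P-2.13} by the same block-matrix device used to obtain Corollary \ref{C-2.8} from Theorem \ref{T-2.7}, namely applying the proposition to $Z=A\oplus B$ and the unital positive map $\bE:\bM_{2n}\to\bM_n$ sending a $2\times2$ block matrix to the average of its diagonal blocks. Your verification of positivity, unitality, and the blockwise functional calculus identities is exactly what is needed.
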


We turn to the proof of the proposition.

\vskip 5pt\noindent
{\it Proof of Proposition \ref{P-2.13}.}\enspace
Considering $g(t)+\varepsilon t$ or $g(t)+\varepsilon$ for any $\varepsilon>0$, we can
assume that $g(t)>0$ for all $t>0$. Note that $g(t)$ is necessarily continuous, right
differentiable, and the right derivative of $h(t):=g^q(t^{1/q})$ on $(0,\infty)$ is
$h_+'(t)=t^{\frac{1}{q}-1}g^{q-1}(t^{1/q})g_+'(t^{1/q})$ as in the proof of Lemma
\ref{L-2.10}. Thus, the convexity of $h(t)$ on $(0,\infty)$ is equivalent to that
$\{g(x)/x\}^{q-1}g_+'(x)$ is non-decreasing on $(0,\infty)$. This indeed holds:
If $g(t)$ is convex with $g(0)=0$, then both $g(x)/x$ and $g_+'(x)$ are non-negative and
non-decreasing. On the other hand, if $g(t)$ is convex and decreasing, then $g(x)/x$ is
non-increasing and $g_+'(x)$ are non-decreasing with opposite signs. Therefore, under our
assumption, $g^q(t^{1/q})$ is convex. We may then apply the convex version of Theorem
\ref{T-1.1} and argue as in the proof of Theorem \ref{T-2.7}.\qed

\section{Minkowski type inequalities}

Section 3 is a bridge between Sections 2 and 4. Here, we will focus on Minkowski
determinantal type inequalities. Our setting is the theory of operator means in the
Kubo-Ando sense \cite{KA}, regarded as genuine non-commutative means. An important property
of operator means is the compatibility with congruence maps $A\mapsto X^*AX$, that is, for
every $A,B\in \bM_n^+$ and every invertible $X\in\bM_n$,
\begin{equation}\label{F-3.1}
(X^*AX)\,\sigma\,(X^*BX) = X^*(A\,\sigma\,B)X.
\end{equation}
From this and simultaneous diagonalization, we see that an operator mean is determined by
its value on commuting operators. The fact that invertibility of $X$ is crucial for
\eqref{F-3.1} should be stressed. For general $X$ we only have
$(X^*AX)\,\sigma\,(X^*BX) \ge X^*(A\,\sigma\,B)X$, called the transformer inequality,
and more generally for any positive linear map
$\bE : \bM_n \to \bM_m$,
\begin{equation*}
\bE(A) \,\sigma\, \bE(B) \ge \bE(A\,\sigma\,B).
\end{equation*}
This is essentially due to Ando \cite{An}, and it is related to the fact that $\sigma$ is
not necessarily continuous on the boundary of $\bM_n^+$, the non-invertible part of
$\bM_n^+$. We only have continuity from above; in particular,
\begin{equation}\label{F-3.2}
A\,\sigma\,B = \lim_{\varepsilon\searrow 0} (A+\varepsilon I)\,\sigma\,(B +\varepsilon I).
\end{equation}
Each operator mean $\sigma$ is associated with a non-negative operator monotone function
$h(t)$ on $[0,\infty)$ with $h(1)=1$, the representing function of $\sigma$. For every
invertible $A,B\in \bM_n^+$ we have
$$
A\,\sigma\,B = A^{1/2}h(A^{-1/2}BA^{-1/2})A^{1/2}.
$$
This is, together with \eqref{F-3.2}, the definition of $\sigma$ in terms of the function
$h(t)$. With a suitable assumption on the representing function, we obtain below some
Minkowski type majorizations.

 The famous Minkowski determinantal inequality is
\begin{equation}\label{F-3.3}
{\det}^{1/n}(A+B)\ge{\det}^{1/n}A+{\det}^{1/n}B
\end{equation}
for any  $A,B\in\bM_n^+$. In the rest of the paper, for any $X\in\bM_n$, we write
$\mu_1(X)\ge\dots\ge\mu_n(X)$ for the singular values of $X$ (i.e., the eigenvalues of
$|X|$) in decreasing order with multiplicities. In \cite{BH1} we noted that
\eqref{F-3.3} can be extended to
$$
\Biggl\{\prod_{j=n+1-k}^n\mu_j(A+B)\Biggr\}^{1/k}
\ge\Biggl\{\prod_{j=n+1-k}^n\mu_j(A)\Biggr\}^{1/k}
+\Biggl\{\prod_{j=n+1-k}^k\mu_j(B)\Biggr\}^{1/k}
$$
or equivalently,
\begin{equation}\label{F-3.4}
\Biggl\{\prod_{j=n+1-k}^n\mu_j(A\,\triangledown\,B)\Biggr\}^{1/k}
\ge\Biggl\{\prod_{j=n+1-k}^n\mu_j(A)\Biggr\}^{1/k}
\,\triangledown\,\Biggl\{\prod_{j=n+1-k}^k\mu_j(B)\Biggr\}^{1/k}
\end{equation}
for $ k=1,\dots,n$,
where $\triangledown$ stands for the arithmetic mean. Replace
$A,B$ with $(A+\varepsilon I)^{-1},(B+\varepsilon I)^{-1}$, respectively, in \eqref{F-3.4},
take the inverse of the both sides, and let $\varepsilon\searrow0$. Then we also have
\begin{equation}\label{F-3.5}
\Biggl\{\prod_{j=1}^k\mu_j(A\,!\,B)\Biggr\}^{1/k}
\le\Biggl\{\prod_{j=1}^k\mu_j(A)\Biggr\}^{1/k}
\,!\,\Biggl\{\prod_{j=1}^k\mu_j(B)\Biggr\}^{1/k},
\end{equation}
where $!$ stands for the harmonic mean, $A\,!\,B:=2(A^{-1}+B^{-1})^{-1}$.

In the next theorem we obtain majorizations similar to \eqref{F-3.4} and \eqref{F-3.5} for
more general operator means, but their forms are rather weaker than those of \eqref{F-3.4}
and \eqref{F-3.5}.

\vskip 10pt
\begin{theorem} Let $\sigma$  be an operator mean with the representing function $h(t)$.
\begin{itemize}
\item[\rm(i)] Assume that $h(t)$ is geometrically convex. Then, for every
$A,B\in\bM_n^+$ and $k=1,\dots,n$,
\begin{equation}\label{F-3.6}
\Biggl\{\prod_{j=1}^k\mu_j(A\,\sigma\,B)\Biggr\}^{1/k}
\ge\Biggl\{\prod_{j=1}^k\mu_j(A)\Biggr\}^{1/k}
\,\sigma\,\Biggl\{\prod_{j=n+1-k}^n\mu_j(B)\Biggr\}^{1/k},
\end{equation}
\begin{equation}\label{F-3.7}
\Biggl\{\prod_{j=1}^k\mu_j(A\,\sigma\,B)\Biggr\}^{1/k}
\ge\Biggl\{\prod_{j=n+1-k}^n\mu_j(A)\Biggr\}^{1/k}
\,\sigma\,\Biggl\{\prod_{j=1}^k\mu_j(B)\Biggr\}^{1/k}.
\end{equation}
\item[\rm(ii)]  Assume that $h(t)$ is geometrically concave. Then, for every
$A,B\in\bM_n^+$ and $k=1,\dots,n$,
\begin{equation}\label{F-3.8}
\Biggl\{\prod_{j=n+1-k}^n\mu_j(A\,\sigma\,B)\Biggr\}^{1/k}
\le\Biggl\{\prod_{j=n+1-k}^n\mu_j(A)\Biggr\}^{1/k}
\,\sigma\,\Biggl\{\prod_{j=1}^k\mu_j(B)\Biggr\}^{1/k},
\end{equation}
\begin{equation}\label{F-3.9}
\Biggl\{\prod_{j=n+1-k}^n\mu_j(A\,\sigma\,B)\Biggr\}^{1/k}
\le\Biggl\{\prod_{j=1}^k\mu_j(A)\Biggr\}^{1/k}
\,\sigma\,\Biggl\{\prod_{j=n+1-k}^n\mu_j(B)\Biggr\}^{1/k}.
\end{equation}
\end{itemize}
\end{theorem}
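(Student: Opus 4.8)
The plan is to reduce these multiplicative (log-type) Minkowski inequalities to the additive supermajorization inequalities of Section 2 by passing through logarithms, exploiting the congruence-compatibility \eqref{F-3.1} of the operator mean together with the representing-function formula $A\,\sigma\,B=A^{1/2}h(A^{-1/2}BA^{-1/2})A^{1/2}$. The key observation is that the products $\bigl\{\prod_{j=1}^k\mu_j(\cdot)\bigr\}^{1/k}$ and $\bigl\{\prod_{j=n+1-k}^n\mu_j(\cdot)\bigr\}^{1/k}$ are exactly the quantities controlled by log-majorization, so the geometric convexity (resp. concavity) of $h$ is the natural hypothesis that makes $\log h(e^t)$ convex (resp. concave), allowing the scalar mean $\sigma$ acting on the product-quantities to be matched against the matrix mean $A\,\sigma\,B$.

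First I would reduce to the invertible case by the continuity-from-above property \eqref{F-3.2}, replacing $A,B$ with $A+\varepsilon I,B+\varepsilon I$ and letting $\varepsilon\searrow0$, so that $A^{-1/2}BA^{-1/2}$ and all the functional calculus expressions are well defined. Next, since an operator mean is determined by its values on commuting operators and is congruence-compatible, I would try to relate $A\,\sigma\,B$ to $A\,\#_\alpha\,B$-type building blocks or directly estimate $\log\det$-type quantities. The core of the argument should be a log-supermajorization statement: I expect to prove that geometric convexity of $h$ yields
\begin{equation*}
A\,\sigma\,B \prec^{w(\log)} \text{(a scalar-mean expression in the singular values)},
\end{equation*}
and then read off \eqref{F-3.6}--\eqref{F-3.7} by specializing the index set to the top $k$ (resp. bottom $k$) singular values. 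The symmetry between (i) and (ii) should come from the duality $A\,\sigma\,B$ versus its adjoint mean and the inversion trick already used to pass from \eqref{F-3.4} to \eqref{F-3.5}: replacing $A,B$ by their inverses sends a geometrically convex $h$ to a geometrically concave representing function and flips products of largest singular values into products of smallest, converting \eqref{F-3.6}--\eqref{F-3.7} into \eqref{F-3.8}--\eqref{F-3.9}.

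For the main multiplicative estimate I anticipate invoking the Gel'fand--Naimark / Lidskii-type multiplicative majorization for singular values of products, together with the fact that $A\,\sigma\,B=A^{1/2}h(A^{-1/2}BA^{-1/2})A^{1/2}$ expresses the mean as a congruence of $h$ applied to a positive operator. Taking logarithms converts the geometric convexity of $h$ into an ordinary convexity statement to which a concave/convex Jensen-type majorization (in the spirit of Theorem \ref{T-1.1} and the log-supermajorization argument in the proof of Theorem \ref{T-2.7}) can be applied. I would then combine this with the antitone/monotone behavior of $\prod_{j=1}^k\lambda_j$ and $\prod_{j=n+1-k}^n\lambda_j$ under the scalar mean $\sigma$, using that a geometrically convex scalar mean preserves the relevant inequalities between the extreme products.

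The hard part will be controlling the \emph{mixing} of the largest singular values of one argument with the smallest of the other — i.e.\ justifying precisely why the top-$k$ product of $A\,\sigma\,B$ dominates the $\sigma$-mean of the top-$k$ product of $A$ and the \emph{bottom}-$k$ product of $B$, rather than a same-end pairing. This asymmetry is forced by the non-commutativity and mirrors the Lidskii--Wielandt pairing $A^{\downarrow}+B^{\uparrow}$ in \eqref{Fan-Lidskii}; establishing it rigorously will likely require a careful antisymmetric-tensor-power ($\wedge^k$) argument, applying the geometric convexity of $h$ to the induced mean on $\wedge^k$ of the Hilbert space and then extracting the extreme eigenvalues. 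Getting the correct index ranges to come out of the tensor-power computation, and verifying that geometric convexity (not merely ordinary convexity) of $h$ is exactly what survives the passage to $\wedge^k$, is where I expect the delicate bookkeeping to lie.
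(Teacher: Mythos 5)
Your reductions are the ones the paper actually uses: continuity from above \eqref{F-3.2} to make $A,B$ invertible, the factorization $A\,\sigma\,B=A^{1/2}h(M)A^{1/2}$ with $M:=A^{-1/2}BA^{-1/2}$, Gel'fand--Naimark, and, for \eqref{F-3.7} and part (ii), the transposed mean $\tilde h(t)=th(t^{-1})$ and the adjoint mean $h^*(t)=h(t^{-1})^{-1}$ combined with the inversion trick (geometric convexity of $h$ is indeed equivalent to geometric concavity of $h^*$). So the skeleton is right.

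The gap lies precisely in the step you single out as the ``hard part,'' where the mechanism you propose would not work. There is no ``induced mean on $\wedge^k$'': the antisymmetric tensor functor is multiplicative but not additive, so $\wedge^k(A\,\sigma\,B)\ne(\wedge^kA)\,\sigma\,(\wedge^kB)$ in general (this fails already for the arithmetic mean, where it amounts to $\det(A+B)=\det A+\det B$ for $k=n$), and likewise $\wedge^kh(M)\ne h(\wedge^kM)$: the eigenvalues of the former are products $h(\mu_{j_1}(M))\cdots h(\mu_{j_k}(M))$ while those of the latter are $h(\mu_{j_1}(M)\cdots\mu_{j_k}(M))$. No Jensen-type matrix argument in the spirit of Theorem \ref{T-1.1} is needed either, and \eqref{F-3.6} is not literally a log-supermajorization between two matrices, since its right-hand side is a single scalar depending on $k$. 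What actually closes the argument is a three-step scalar chain. First, the cross-pairing of largest with smallest singular values comes for free from Gel'fand--Naimark applied to $X:=A^{1/2}h^{1/2}(M)$, since $\mu_j(A\,\sigma\,B)=\mu_j^2(X)$:
$$
\prod_{j=1}^k\mu_j(X)\ \ge\ \prod_{j=1}^k\mu_j(A^{1/2})\cdot\prod_{j=n+1-k}^n\mu_j\bigl(h^{1/2}(M)\bigr).
$$
Second, geometric convexity enters only as a scalar inequality on eigenvalues, $\bigl\{\prod_{j=n+1-k}^nh(\mu_j(M))\bigr\}^{1/k}\ge h\bigl(\bigl\{\prod_{j=n+1-k}^n\mu_j(M)\bigr\}^{1/k}\bigr)$. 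Third, the Horn majorization applied to $\mu_j(M)=\mu_j^2(A^{-1/2}B^{1/2})$, together with the monotonicity of the operator monotone function $h$, gives $h\bigl(\bigl\{\prod_{j=n+1-k}^n\mu_j(M)\bigr\}^{1/k}\bigr)\ge h\bigl(\bigl\{\prod_{j=1}^k\mu_j(A)\bigr\}^{-1/k}\bigl\{\prod_{j=n+1-k}^n\mu_j(B)\bigr\}^{1/k}\bigr)$. Assembling these with the scalar formula $a\,\sigma\,b=a\,h(b/a)$ is exactly \eqref{F-3.6}, and everything else in the theorem follows by your (correct) symmetry reductions. In short, what is missing is this concrete chain — Gel'fand--Naimark, scalar geometric convexity, Horn — not a new tensor-power lemma; the tensor-power route you sketch would have to be abandoned or repaired into precisely this eigenvalue-level argument.
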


\vskip 10pt
\begin{proof}
(i)\enspace
To prove \eqref{F-3.6}, we may assume by continuity from above that $A$ and $B$ are
invertible. Hence $A\,\sigma\,B=A^{1/2}h(A^{-1/2}BA^{-1/2})A^{1/2}$, so we have
\begin{align*}
\Biggl\{\prod_{j=1}^k\mu_j(A\,\sigma\,B)\Biggr\}^{1/k} 
&=\Biggl\{\prod_{j=1}^k\mu_j(A^{1/2}h^{1/2}(A^{-1/2}BA^{-1/2}))\Biggr\}^{2/k} \\
\ge\Biggl\{\prod_{j=1}^k&\mu_j(A^{1/2})\Biggr\}^{2/k}
\Biggl\{\prod_{j=n+1-k}^n\mu_j(h^{1/2}(A^{-1/2}BA^{-1/2}))\Biggr\}^{2/k} \\
=\Biggl\{\prod_{j=1}^k&\mu_j(A)\Biggr\}^{1/k}
\Biggl\{\prod_{j=n+1-k}^nh(\mu_j(A^{-1/2}BA^{-1/2}))\Biggr\}^{1/k} \\
\ge\Biggl\{\prod_{j=1}^k&\mu_j(A)\Biggr\}^{1/k}
h\Biggl(\Biggl\{\prod_{j=n+1-k}^n\mu_j(A^{-1/2}B^{1/2})\Biggr\}^{2/k}\Biggr) \\
\ge\Biggl\{\prod_{j=1}^k&\mu_j(A)\Biggr\}^{1/k}
h\Biggl(\Biggl\{\prod_{j=n+1-k}^n\mu_j(A^{-1/2})\Biggr\}^{2/k}
\Biggl\{\prod_{j=n+1-k}^n\mu_j(B^{1/2})\Biggr\}^{2/k}\Biggr) \\
=\Biggl\{\prod_{j=1}^k&\mu_j(A)\Biggr\}^{1/k}
h\Biggl(\Biggl\{\prod_{j=1}^k\mu_j(A)\Biggr\}^{-1/k}
\Biggl\{\prod_{j=n+1-k}^n\mu_j(B)\Biggr\}^{1/k}\Biggr).
\end{align*}
In the above, the first inequality is due to the Gel'fand-Naimark majorization
(\cite[p.~248]{MO}, \cite[III.4.5]{Bh}), the second is due to the geometric
convexity of $h(t)$, and the last is due to the Horn majorization
(\cite[p.~246]{MO}, \cite[(III.19)]{Bh}). Hence \eqref{F-3.6} is obtained.
The proof of \eqref{F-3.7} is similar, or else we can show it from \eqref{F-3.6}
as follows: Consider the transposed operator mean $A\,\sigma'\,B:=B\,\sigma\,A$ with
the corresponding representing function $\tilde h(t):=t h(t^{-1})$ for $t>0$
(and $\tilde h(0):=\lim_{t\searrow0}\tilde h(t)$).
Since $\tilde h(t)$ is geometrically convex, we can apply \eqref{F-3.6} to $A$ and $B$
interchanged so that \eqref{F-3.7} follows.

(ii)\enspace
We may assume as above that $A,B$ are invertible. We can infer \eqref{F-3.8} from
\eqref{F-3.6}. Indeed, consider the adjoint operator mean
$A\,\sigma^*\,B:=(A^{-1}\,\sigma\,B^{-1})^{-1}$ for invertible $A,B$ with the representing
function $h^*(t):=h(t^{-1})^{-1}$ for $t>0$. Since $h^*(t)$ is geometrically concave if
and only if $h(t)$ is geometrically convex, we can apply \eqref{F-3.6} to $A^{-1}$ and
$B^{-1}$ to obtain
$$
\Biggl\{\prod_{j=1}^k\mu_j(A^{-1}\,\sigma^*\,B^{-1})\Biggr\}^{1/k}
\ge\Biggl\{\prod_{j=1}^k\mu_j(A^{-1})\Biggr\}^{1/k}
\,\sigma^*\,\Biggl\{\prod_{j=n+1-k}^n\mu_j(B^{-1})\Biggr\}^{1/k}.
$$
By reversing both sides we have \eqref{F-3.8}, which also implies \eqref{F-3.9} as in the
proof of (i).
\end{proof}

\vskip 10pt
\begin{cor}\label{C-3.2}
Let $\sigma$ be an operator mean whose representing function is geometrically convex.
Then, for every $A,B\in\bM_n^+$,
$$
{\det}^{1/n}(A\,\sigma\,B)\ge({\det}^{1/n}A)\,\sigma\,({\det}^{1/n}B),
$$
and the reverse inequality holds if the representing function is geometrically concave.
\end{cor}

\vskip 10pt
\begin{remark} It is obvious that
the majorization with $\prod_{j=1}^k$ (resp., $\prod_{j=n+1-k}^n$) in the all three terms
in \eqref{F-3.6} (resp., \eqref{F-3.8}) does not hold. In fact, for the arithmetic mean
(resp., the harmonic mean) and $k=1$, this means that $\mu_1(A+B)\ge\mu_1(A)+\mu_1(B)$
that is of course false.
\end{remark}

\begin{remark}
The arithmetic mean and the logarithmic operator mean (see Example \ref{E-3.12} below)
satisfy the assumption of (i), and the harmonic mean satisfies the assumption of (ii).
The geometric operator mean obviously satisfies both assumptions. We do not know whether,
under the assumption of (i), the generalization
$$
\Biggl\{\prod_{j=n+1-k}^n\mu_j(A\,\sigma\,B)\Biggr\}^{1/k}
\ge\Biggl\{\prod_{j=n+1-k}^n\mu_j(A)\Biggr\}^{1/k}
\,\sigma\,\Biggl\{\prod_{j=n+1-k}^n\mu_j(B)\Biggr\}^{1/k}
$$
of \eqref{F-3.4} and Corollary \ref{C-3.2} holds or not, and whether, under the
assumption of (ii), the generalization
$$
\Biggl\{\prod_{j=1}^k\mu_j(A\,\sigma\,B)\Biggr\}^{1/k}
\le\Biggl\{\prod_{j=1}^k\mu_j(A)\Biggr\}^{1/k}
\,\sigma\,\Biggl\{\prod_{j=1}^k\mu_j(B)\Biggr\}^{1/k}
$$
of \eqref{F-3.5} holds or not. But, these hold true for the weighted geometric operator
means as stated in the next proposition.
\end{remark}

For each $\alpha\in[0,1]$ let $A\,\#_\alpha\,B$ denote the $\alpha$-weighted geometric
mean of $A,B\in\bM_n^+$, defined for invertible $A,B$ as
$$
A\,\#_\alpha\,B:=A^{1/2}(A^{-1/2}BA^{-1/2})^\alpha A^{1/2}.
$$

\vskip 10pt
\begin{prop}
Let $\alpha\in[0,1]$. For every $A,B\in\bM_n^+$ and $k=1,\dots,n$,
\begin{align}
&\Biggl\{\prod_{j=1}^k\mu_j(A)\Biggr\}
\,\#_\alpha\,\Biggl\{\prod_{j=n+1-k}^n\mu_j(B)\Biggr\} \nonumber\\
&\qquad\le\prod_{j=1}^k\mu_j(A\,\#_\alpha\,B)
\le\Biggl\{\prod_{j=1}^k\mu_j(A)\Biggr\}\,\#_\alpha\,
\Biggl\{\prod_{j=1}^k\mu_j(B)\Biggr\}, \label{F-3.10}
\end{align}
\begin{align}
&\Biggl\{\prod_{j=n+1-k}^n\mu_j(A)\Biggr\}
\,\#_\alpha\,\Biggl\{\prod_{j=1}^k\mu_j(B)\Biggr\} \nonumber\\
&\qquad\ge\prod_{j=n+1-k}^n\mu_j(A\,\#_\alpha\,B)
\ge\Biggl\{\prod_{j=n+1-k}^n\mu_j(A)\Biggr\}\,\#_\alpha\,
\Biggl\{\prod_{j=n+1-k}^n\mu_j(B)\Biggr\}. \label{F-3.11}
\end{align}
\end{prop}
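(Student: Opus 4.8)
The plan is to reduce the entire statement to its $k=1$ instance by means of antisymmetric tensor powers, and then to dispatch that instance by monotonicity of $\#_\alpha$ alone. By continuity from above \eqref{F-3.2} together with the continuity of the singular values, I would first assume that $A,B$ are invertible, so that $A\,\#_\alpha\,B=A^{1/2}(A^{-1/2}BA^{-1/2})^\alpha A^{1/2}$ is given directly.

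For $k=1$ the four bounds in \eqref{F-3.10}--\eqref{F-3.11} concern only the largest eigenvalue $\mu_1(X)=\|X\|$ and the smallest $\mu_n(X)=\|X^{-1}\|^{-1}$ of $X\in\bM_n^+$. I would use that $\#_\alpha$ is monotone in each variable (a basic property of operator means) together with the elementary identities $(cI)\,\#_\alpha\,(dI)=c^{1-\alpha}d^\alpha I$, $\ X\,\#_\alpha\,(cI)=c^\alpha X^{1-\alpha}$, and $(cI)\,\#_\alpha\,X=c^{1-\alpha}X^\alpha$ for $c,d>0$, and then sandwich $A,B$ between $\mu_n(\cdot)I$ and $\mu_1(\cdot)I$. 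Thus $A\le\mu_1(A)I$ and $B\le\mu_1(B)I$ give $A\,\#_\alpha\,B\le\mu_1(A)^{1-\alpha}\mu_1(B)^\alpha I$, the upper bound of \eqref{F-3.10}; while $B\ge\mu_n(B)I$ gives $A\,\#_\alpha\,B\ge\mu_n(B)^\alpha A^{1-\alpha}$, and taking $\mu_1$ yields the lower bound of \eqref{F-3.10}. The two bounds of \eqref{F-3.11} come out the same way by taking $\mu_n$ in $A\,\#_\alpha\,B\ge\mu_n(A)^{1-\alpha}\mu_n(B)^\alpha I$ and in $A\,\#_\alpha\,B\le\mu_1(B)^\alpha A^{1-\alpha}$.

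To pass from $k=1$ to general $k$, I would apply the $k$-th antisymmetric tensor (compound) power $\wedge^k X\in\bM_{\binom{n}{k}}$. The crucial identity is
$$
\wedge^k(A\,\#_\alpha\,B)=(\wedge^k A)\,\#_\alpha\,(\wedge^k B),
$$
valid because $\wedge^k$ is multiplicative, preserves adjoints, and satisfies $\wedge^k(X^t)=(\wedge^k X)^t$ for $X\in\bM_n^+$ and $t\in\bR$; expanding $A\,\#_\alpha\,B$ and distributing $\wedge^k$ across its three factors gives it. Combined with $\mu_1(\wedge^k X)=\prod_{j=1}^k\mu_j(X)$ and $\mu_{\binom{n}{k}}(\wedge^k X)=\prod_{j=n+1-k}^n\mu_j(X)$, the already-established $k=1$ inequalities applied to the pair $\wedge^k A,\wedge^k B$ become precisely \eqref{F-3.10} and \eqref{F-3.11}.

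The step I expect to matter most is the tensor-power identity: it is exactly this multiplicative compatibility of $\#_\alpha$ with $\wedge^k$, which a general operator mean lacks, that permits the same index set $\prod_{j=1}^k$ or $\prod_{j=n+1-k}^n$ in all three terms, in contrast to the mixed indices forced by the Gel'fand--Naimark/Horn argument of the preceding theorem. Once it is in place, the $k=1$ case is entirely elementary and the reduction to invertible operators is a routine limit.
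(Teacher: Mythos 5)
Your proof is correct, but it takes a genuinely different route from the paper's. The paper assembles the proposition from two separate arguments: the mixed-index (first) inequalities of \eqref{F-3.10} and \eqref{F-3.11} are quoted as special cases of \eqref{F-3.6} and \eqref{F-3.8} (whose proofs rest on the Gel'fand--Naimark and Horn majorizations applied to the representing function $t^\alpha$), while the second inequality of \eqref{F-3.10} is proved by lifting the implication $A^{\frac{1-\alpha}{2\alpha}}BA^{\frac{1-\alpha}{2\alpha}}\le I\Rightarrow A\,\#_\alpha\,B\le I$ through antisymmetric tensor powers and invoking the Horn majorization once more, and the second inequality of \eqref{F-3.11} is then deduced by the substitution of $A,B$ with $A^{-1},B^{-1}$. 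You instead get all four inequalities in one stroke from the exact identity $\wedge^k(A\,\#_\alpha\,B)=(\wedge^k A)\,\#_\alpha\,(\wedge^k B)$ --- which is indeed valid, since $\wedge^k$ is multiplicative, $*$-preserving, and commutes with real powers of positive definite matrices --- combined with an entirely elementary $k=1$ case obtained by sandwiching $A$ and $B$ between $\mu_n(\cdot)I$ and $\mu_1(\cdot)I$ and using the joint monotonicity of $\#_\alpha$; the reduction to invertible $A,B$ via \eqref{F-3.2} and continuity of eigenvalues is routine, as you say. Your route is more self-contained (no majorization theorems at all are needed), and it isolates exactly the feature of $\#_\alpha$ --- compatibility with compound powers --- that permits equal index sets in all three terms, which is precisely the point of the remark preceding the proposition: a general operator mean, even one with geometrically convex representing function, lacks this compatibility. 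What the paper's heavier route buys is generality and economy within its own framework: the mixed-index inequalities \eqref{F-3.6}--\eqref{F-3.9} hold for every operator mean with geometrically convex (resp.\ concave) representing function, so the proposition's first inequalities come for free there, whereas your argument, by its nature, works only for the weighted geometric means themselves.
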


\vskip 10pt
\begin{proof}
Since the representing function of $\#_\alpha$ is $t^{\alpha}$, the first inequalities of
\eqref{F-3.10} and \eqref{F-3.11} are special cases of \eqref{F-3.6} and \eqref{F-3.8},
respectively. Let us prove the second inequality of \eqref{F-3.10}. We may assume that
$A,B$ are invertible and $0<\alpha<1$. Since
$A^{\frac{1-\alpha}{2\alpha}}BA^{\frac{1-\alpha}{2\alpha}}\le I$ implies
$A\,\#_\alpha\,B\le I$ as easily verified, we have
$\|A\,\#_\alpha\,B\|_\infty\le\big\|\bigl(A^{\frac{1-\alpha}{2\alpha}}B
A^{\frac{1-\alpha}{2\alpha}}\bigr)^\alpha\big\|_\infty$ for the operator norm. With the
antisymmetric tensor power technique (see \cite{AH}, \cite[Section 4.6]{H1}) this yields
$$
\prod_{j=1}^k\mu_j(A\,\#_\alpha\,B)\le\prod_{j=1}^k\mu_j\bigl(
\bigl(A^{\frac{1-\alpha}{2\alpha}}BA^{\frac{1-\alpha}{2\alpha}}\bigr)^\alpha\bigr),
\qquad k=1,\dots,n.
$$
Moreover, for $k=1,\dots,n$,
$$
\prod_{j=1}^k\mu_j\bigl(
\bigl(A^{\frac{1-\alpha}{2\alpha}}BA^{\frac{1-\alpha}{2\alpha}}\bigr)^\alpha\bigr)
=\Biggl\{\prod_{j=1}^k\mu_j\bigl(A^{\frac{1-\alpha}{2\alpha}}
B^{1/2}\bigr)\Biggr\}^{2\alpha}
\le\Biggl\{\prod_{j=1}^k\mu_j(A)\Biggr\}^{1-\alpha}
\Biggl\{\prod_{j=1}^k\mu_j(B)\Biggr\}^\alpha
$$
by the Horn majorization. Hence the second inequality of \eqref{F-3.10} follows. The second
inequality of \eqref{F-3.11} then follows from that of \eqref{F-3.10} by replacing $A,B$
with $A^{-1},B^{-1}$ and reversing the inequality.
\end{proof}

The following is a restatement of the second inequality of \eqref{F-3.10} or \eqref{F-3.11}
in terms of log-majorization, see \cite{AH}.

\begin{cor}\label{C-3.6}
For every $A,B\in\bM_n^+$ and every $\alpha\in[0,1]$,
\begin{equation}\label{F-3.12}
A\,\#_\alpha\,B\prec_{(\log)}A^\downarrow\,\#_\alpha\,B^\downarrow.
\end{equation}
\end{cor}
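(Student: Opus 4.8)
The plan is to read off \eqref{F-3.12} directly from the second inequality of \eqref{F-3.10}, the only genuine extra work being the determinantal equality that distinguishes true log-majorization $\prec_{(\log)}$ from its weak version. Recall that $A\,\#_\alpha\,B\prec_{(\log)}A^\downarrow\,\#_\alpha\,B^\downarrow$ means
\begin{equation*}
\prod_{j=1}^k\mu_j(A\,\#_\alpha\,B)\le\prod_{j=1}^k\mu_j(A^\downarrow\,\#_\alpha\,B^\downarrow),\qquad k=1,\dots,n,
\end{equation*}
with equality when $k=n$. First I would compute the right-hand side. Since $A^\downarrow$ and $B^\downarrow$ are both diagonal they commute, so $A^\downarrow\,\#_\alpha\,B^\downarrow=(A^\downarrow)^{1-\alpha}(B^\downarrow)^\alpha=\diag\bigl(\mu_j(A)^{1-\alpha}\mu_j(B)^\alpha\bigr)_{j=1}^n$; and because the sequences $\mu_j(A)$ and $\mu_j(B)$ are both non-increasing in $j$, so is the product $\mu_j(A)^{1-\alpha}\mu_j(B)^\alpha$, so these diagonal entries are already the singular values of $A^\downarrow\,\#_\alpha\,B^\downarrow$ listed in decreasing order.

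Consequently, for each $k$,
\begin{equation*}
\prod_{j=1}^k\mu_j(A^\downarrow\,\#_\alpha\,B^\downarrow)=\Biggl\{\prod_{j=1}^k\mu_j(A)\Biggr\}^{1-\alpha}\Biggl\{\prod_{j=1}^k\mu_j(B)\Biggr\}^\alpha=\Biggl\{\prod_{j=1}^k\mu_j(A)\Biggr\}\,\#_\alpha\,\Biggl\{\prod_{j=1}^k\mu_j(B)\Biggr\},
\end{equation*}
the last equality being $a\,\#_\alpha\,b=a^{1-\alpha}b^\alpha$ for scalars. The second inequality of \eqref{F-3.10} is exactly $\prod_{j=1}^k\mu_j(A\,\#_\alpha\,B)\le\prod_{j=1}^k\mu_j(A^\downarrow\,\#_\alpha\,B^\downarrow)$ for $k=1,\dots,n$, which already delivers the weak log-majorization.

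It then remains to promote this to $\prec_{(\log)}$ by establishing equality at $k=n$, i.e. $\det(A\,\#_\alpha\,B)=\det(A^\downarrow\,\#_\alpha\,B^\downarrow)$; this is the one point not contained verbatim in \eqref{F-3.10}, and is where I would concentrate. Using continuity from above \eqref{F-3.2} I would reduce to invertible $A,B$, and then from $A\,\#_\alpha\,B=A^{1/2}(A^{-1/2}BA^{-1/2})^\alpha A^{1/2}$ compute $\det(A\,\#_\alpha\,B)=(\det A)\,\bigl\{\det(A^{-1/2}BA^{-1/2})\bigr\}^\alpha=(\det A)^{1-\alpha}(\det B)^\alpha$. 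On the other side, $\det(A^\downarrow\,\#_\alpha\,B^\downarrow)=\prod_{j=1}^n\mu_j(A)^{1-\alpha}\mu_j(B)^\alpha=(\det A)^{1-\alpha}(\det B)^\alpha$, so the two determinants coincide and \eqref{F-3.12} follows. The main obstacle, such as it is, is precisely this determinantal identity at the top level; all the rest is a transcription of the Proposition.
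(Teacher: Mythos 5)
Your proposal is correct and follows essentially the same route as the paper: weak log-majorization is read off from the second inequality of \eqref{F-3.10}, and the upgrade to $\prec_{(\log)}$ comes from the determinantal identity $\det(A\,\#_\alpha\,B)=(\det A)^{1-\alpha}(\det B)^\alpha$, which is exactly the equality at $k=n$ that the paper verifies. Your added details (the explicit diagonal computation of $A^\downarrow\,\#_\alpha\,B^\downarrow$ and the continuity reduction to invertible $A,B$) are sound elaborations of steps the paper leaves implicit.
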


\vskip 10pt
\begin{proof}
The second inequality of \eqref{F-3.10} means that
$$
A\,\#_\alpha\,B\prec_{w(\log)}A^\downarrow\,\#_\alpha\,B^\downarrow.
$$
Since
$$
\prod_{j=1}^n\mu_j(A\,\#_\alpha\,B)=\det(A\,\#_\alpha\,B)
=(\det A)^{1-\alpha}(\det B)^\alpha
=\prod_{j=1}^n\bigl\{\mu_j^{1-\alpha}(A)\mu_j^\alpha(B)\bigr\},
$$
we have \eqref{F-3.12}.
\end{proof}

\vskip 10pt
\begin{prop}\label{P-3.7}
Let $\sigma$ be an operator mean with representing function $h(t)$. Assume that there
exists a probability measure $\nu$ on $[0,1]$ such that
\begin{equation}\label{F-3.13}
h(x)=\int_0^1x^\alpha\,d\nu(\alpha),\qquad x\in[0,\infty).
\end{equation}
Then, for every $A,B\in\bM_n^+$ and $k=1,\dots,n$,
\begin{equation}\label{F-3.14}
\Biggl\{\prod_{j=n+1-k}^n\mu_j(A\,\sigma\,B)\Biggr\}^{1/k}
\ge\Biggl\{\prod_{j=n+1-k}^n\mu_j(A)\Biggr\}^{1/k}\,\sigma\,
\Biggl\{\prod_{j=n+1-k}^n\mu_j(B)\Biggr\}^{1/k}.
\end{equation}
\end{prop}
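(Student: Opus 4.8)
The plan is to realise $\sigma$ as an average of weighted geometric means and then transport the already-established per-$\alpha$ estimate \eqref{F-3.11} through a superadditive functional. For invertible $A,B$ set $C:=A^{-1/2}BA^{-1/2}$; applying the functional calculus to \eqref{F-3.13} gives $h(C)=\int_0^1 C^\alpha\,d\nu(\alpha)$, whence
$$
A\,\sigma\,B=A^{1/2}h(C)A^{1/2}=\int_0^1 A^{1/2}C^\alpha A^{1/2}\,d\nu(\alpha)=\int_0^1(A\,\#_\alpha\,B)\,d\nu(\alpha).
$$
By continuity from above \eqref{F-3.2} and continuity of the singular values, it suffices to prove \eqref{F-3.14} for invertible $A,B$, which I assume henceforth (the boundary case where $A$ or $B$ is singular then follows by letting the regularisation parameter tend to $0$).

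The key auxiliary fact I would establish is that the functional
$$
\Phi_k(X):=\Bigl\{\prod_{j=n+1-k}^n\mu_j(X)\Bigr\}^{1/k},\qquad X\in\bM_n^+,
$$
is positively homogeneous and \emph{superadditive}. For this I would use the variational description $\Phi_k(X)^k=\min_{\dim V=k}\det(P_VXP_V|_V)$, i.e.\ $\Phi_k(X)^k$ is the smallest eigenvalue of the $k$-fold antisymmetric tensor power, attained at a decomposable vector. For a fixed $k$-dimensional $V$, Minkowski's determinantal inequality \eqref{F-3.3} applied to the $k\times k$ compressions $A|_V$ and $B|_V$ gives $\det((A+B)|_V)^{1/k}\ge\det(A|_V)^{1/k}+\det(B|_V)^{1/k}$; taking the minimum over $V$ and using $\min_V(f+g)\ge\min_Vf+\min_Vg$ yields $\Phi_k(A+B)\ge\Phi_k(A)+\Phi_k(B)$. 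Together with homogeneity this makes $\Phi_k$ concave on the cone $\bM_n^+$.

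With this in hand the proof is short. Superadditivity and homogeneity give the Jensen-type inequality $\Phi_k\bigl(\int_0^1(A\,\#_\alpha\,B)\,d\nu(\alpha)\bigr)\ge\int_0^1\Phi_k(A\,\#_\alpha\,B)\,d\nu(\alpha)$, first for finite convex combinations and then, by continuity of $\alpha\mapsto A\,\#_\alpha\,B$ and of $\Phi_k$, for the integral. The second inequality of \eqref{F-3.11} in the previous proposition bounds each integrand: writing $a:=\Phi_k(A)$ and $b:=\Phi_k(B)$, taking $1/k$-th powers gives $\Phi_k(A\,\#_\alpha\,B)\ge a\,\#_\alpha\,b=a^{1-\alpha}b^\alpha$. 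Hence
$$
\Phi_k(A\,\sigma\,B)\ge\int_0^1 a^{1-\alpha}b^\alpha\,d\nu(\alpha)=a\int_0^1(b/a)^\alpha\,d\nu(\alpha)=a\,h(b/a)=a\,\sigma\,b,
$$
the last two equalities using \eqref{F-3.13} and the scalar form $a\,\sigma\,b=a\,h(b/a)$ of the mean. This is exactly \eqref{F-3.14}.

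I expect the main obstacle to be the auxiliary superadditivity of $\Phi_k$ and the rigorous justification of the integral Jensen step, since the per-$\alpha$ estimate is already supplied by the previous proposition and the final scalar computation is routine. The superadditivity is the crux: it is what upgrades the pointwise (in $\alpha$) bounds into a bound on the averaged mean $A\,\sigma\,B$, and it is precisely the feature that distinguishes geodesic means from the general geometrically convex case left open in Remark \ref{T-1.1} above.
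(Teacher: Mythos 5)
Your proof is correct and follows essentially the same route as the paper: realise $\sigma$ as $\int_0^1 A\,\#_\alpha\,B\,d\nu(\alpha)$, use superadditivity (hence concavity) of $\Delta_k(X)=\bigl\{\prod_{j=n+1-k}^n\mu_j(X)\bigr\}^{1/k}$ to pass the functional inside the integral, and then apply the per-$\alpha$ bound from the second inequality of \eqref{F-3.11}. The only difference is that you prove the superadditivity of $\Delta_k$ from scratch (via the compression/antisymmetric-power variational formula and Minkowski's inequality, which is a correct argument), whereas the paper simply cites it from \cite[Example 3.8]{BH1} (Example \ref{E-4.5}).
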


\vskip 15pt
\begin{proof}
By assumption the operator mean $A\,\sigma\,B$ is expressed as
$$
A\,\sigma\,B=\int_0^1A\,\#_\alpha\,B\,d\nu(\alpha).
$$
Since $A\in\bM_n^+\mapsto\bigl\{\prod_{j=n+1-k}^n\mu_j(A)\bigr\}^{1/k}$ is superadditive
(hence concave) by \cite[Example 3.8]{BH1} (or Example \ref{E-4.5} below), 
\begin{align*}
\Biggl\{\prod_{j=n+1-k}^n\mu_j(A\,\sigma\,B)\Biggr\}^{1/k}
&\ge\int_0^1\Biggl\{\prod_{j=n+1-k}^n\mu_j(A\,\#_\alpha\,B)\Biggr\}^{1/k}\,d\nu(\alpha) \\
&\ge\int_0^1\Biggl\{\prod_{j=n+1-k}^n\mu_j(A)\Biggr\}^{1/k}\,\#_\alpha\,
\Biggl\{\prod_{j=n+1-k}^n\mu_j(B)\Biggr\}^{1/k}\,d\nu(\alpha) \\
&=\Biggl\{\prod_{j=n+1-k}^n\mu_j(A)\Biggr\}^{1/k}\,\sigma\,
\Biggl\{\prod_{j=n+1-k}^n\mu_j(B)\Biggr\}^{1/k},
\end{align*}
where \eqref{F-3.11} has been used for the second inequality.
\end{proof}

\vskip 15pt
\begin{remark}
As mentioned in the Introduction, we are interested in operator means $\sigma$ for which
the log-supermajorizations in \eqref{Conj} hold. For example, the second inequality
\eqref{F-3.11} (or the log-majorization of Corollary \ref{C-3.6}) is the second relation
of \eqref{Conj} for $\sigma=\#_\alpha$ and the first of \eqref{F-3.11} is slightly weaker
than that of \eqref{Conj} for $\sigma=\#_\alpha$. When $\sigma$ has the representing
function as in Proposition \ref{P-3.7}, \eqref{F-3.14} is a weaker version of the second
of \eqref{Conj} since $(a\,\sigma\,b)^{1/k}\ge a^{1/k}\,\sigma\,b^{1/k}$ for reals
$a,b\ge0$ in this case. Note also that sums of geometrically convex functions are again
such functions; so the function $h(t)$ given in \eqref{F-3.13} is geometrically convex.

When $\sigma=\triangledown$ the arithmetic mean, both relations of \eqref{Conj} hold
since \eqref{Fan-Lidskii} entails
$A^\downarrow+B^\uparrow\prec^{w(\log)}A+B\prec^{w(\log)}A^\downarrow+B^\downarrow$.
Furthermore, by replacing $A,B$ with $A^{-1},B^{-1}$ and reversing
the inequalities, we see that the relations in \eqref{Conj} hold with log-submajorization
$\prec_{w(\log)}$ instead of $\prec^{w(\log)}$ when $\sigma=\,!$ the harmonic mean.
When $\sigma=\#$ the geometric mean, both of \eqref{Conj} do hold; in fact,
the log-majorizations
$A^\downarrow \# B^\uparrow \prec^{(\log)}A\# B\prec^{(\log)}A^\downarrow\#B^\downarrow$
hold. This follows from the Gel'fand-Naimark and the Horn majorizations applied to the
factorization $A\#B =A^{1/2}VB^{1/2}$ where $V:=(A^{-1/2}BA^{-1/2})^{1/2}A^{1/2}B^{-1/2}$
is a unitary. On the other hand, if $AB=BA$, then both of \eqref{Conj} do hold also
when $\sigma=\sigma_p$ is the operator $p$-mean (see Example \ref{E-3.11}) for
$p\in (0,1)$.

After these considerations {\it we may conjecture \eqref{Conj} for any operator mean
$\sigma$ whose representing function is geometrically convex.}
\end{remark}

\vskip 10pt
In the rest of the section we will present a characterization of operator monotone
functions on $[0,\infty)$ admitting the integral expression \eqref{F-3.13} and give
concrete examples of such functions.

\vskip 5pt
\begin{theorem}\label{T-3.9}
The following conditions for a non-negative operator monotone function $h(x)$ on
$[0,\infty)$ are equivalent:
\begin{itemize}
\item[\rm(i)] there exists a finite positive measure $\nu$ on $[0,1]$ such that
$$
f(x)=\int_0^1x^\lambda\,d\nu(\lambda),\qquad x\in[0,\infty);
$$
\item[\rm(ii)] $h(e^t)$ is absolutely monotone on $\bR$, i.e.,
$\frac{d^n}{dt^n}\,h(e^t)\ge0$ for every $t\in\bR$ and $n\in\bN$, or equivalently,
$h(e^{-t})$ is completely monotone on $\bR$.
\end{itemize}
\end{theorem}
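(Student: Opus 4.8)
The plan is to prove the two implications separately, with (i)$\Rightarrow$(ii) being routine and (ii)$\Rightarrow$(i) carrying all the weight. For (i)$\Rightarrow$(ii) I would simply substitute the representation: if $h(x)=\int_0^1 x^\lambda\,d\nu(\lambda)$ then $h(e^t)=\int_0^1 e^{\lambda t}\,d\nu(\lambda)$, and differentiating under the integral sign $n$ times gives
\[
\frac{d^n}{dt^n}h(e^t)=\int_0^1\lambda^n e^{\lambda t}\,d\nu(\lambda)\ge0,
\]
since $\nu\ge0$ and $\lambda\ge0$. Differentiation under the integral is justified because $\nu$ is finite and supported on the compact set $[0,1]$, so the integrands are uniformly bounded on compact $t$-intervals. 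This is exactly absolute monotonicity of $h(e^t)$ on $\bR$; the stated equivalence with complete monotonicity of $h(e^{-t})$ is just the substitution $t\mapsto-t$, under which $\frac{d^n}{dt^n}h(e^{-t})=(-1)^n\bigl(\frac{d^n}{ds^n}h(e^s)\bigr)\big|_{s=-t}$.

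For (ii)$\Rightarrow$(i) the idea is to first produce an integral representation against a positive measure on $[0,\infty)$ by a Bernstein-type theorem, and then to use operator monotonicity of $h$ to push the support down into $[0,1]$. Set $\psi(t):=h(e^{-t})$, completely monotone on all of $\bR$ by hypothesis. By the Bernstein--Widder theorem on $(0,\infty)$ there is a positive measure $\mu$ on $[0,\infty)$ with $\psi(t)=\int_{[0,\infty)}e^{-\lambda t}\,d\mu(\lambda)$ for $t>0$, that is, $h(x)=\int_{[0,\infty)}x^\lambda\,d\mu(\lambda)$ for $x\in(0,1)$. I would then argue that this in fact holds for all $x>0$: complete monotonicity of $\psi$ on the whole line yields, for each $a<0$, a Bernstein representation on $(a,\infty)$, and comparing representing measures by uniqueness of the Laplace transform shows $\int_{[0,\infty)}e^{\lambda s}\,d\mu(\lambda)<\infty$ for every $s>0$. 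Consequently $\int e^{-\lambda t}\,d\mu(\lambda)$ converges and equals $\psi(t)$ for every $t\in\bR$, so $h(x)=\int_{[0,\infty)}x^\lambda\,d\mu(\lambda)$ for all $x>0$; note also $\mu([0,\infty))=\lim_{t\searrow0}\psi(t)=h(1)<\infty$.

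It remains to show $\mathrm{supp}\,\mu\subseteq[0,1]$, and this is where operator monotonicity enters — it is the conceptual crux, since absolute monotonicity alone permits mass outside $[0,1]$ (for instance $h(x)=x^2$ satisfies (ii) but is not operator monotone). Since $h$ is operator monotone on $[0,\infty)$ it is concave in the ordinary sense, and a concave function lies below each tangent line, so $h(x)\le h(1)+h_+'(1)(x-1)=O(x)$ as $x\to\infty$. If $\mu((1,\infty))>0$, then $\mu((a,b))>0$ for some $1<a<b$, and for $x\ge1$,
\[
h(x)=\int_{[0,\infty)}x^\lambda\,d\mu(\lambda)\ \ge\ \int_{(a,b)}x^\lambda\,d\mu(\lambda)\ \ge\ \mu((a,b))\,x^a,
\]
which grows faster than linearly because $a>1$, contradicting the linear bound. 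Hence $\mu$ is supported on $[0,1]$, it is finite with $\mu([0,1])=h(1)$, and $\nu:=\mu|_{[0,1]}$ gives the representation in (i). I expect the main obstacle to be the passage establishing that the Bernstein measure has all exponential moments finite, so that the representation is valid for large $x$ where the growth comparison is applied; once this is in place, the concavity/linear-growth property of operator monotone functions makes the support restriction clean.
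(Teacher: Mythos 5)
Your proof is correct and follows essentially the same route as the paper: Bernstein's representation theorem applied on shifted half-lines together with uniqueness of the representing measure to obtain $h(x)=\int_0^\infty x^\lambda\,d\mu(\lambda)$ for all $x>0$ with all exponential moments of $\mu$ finite, and then the at-most-linear growth of $h$ to force $\mu((1,\infty))=0$. The only cosmetic difference is that you justify $h(x)=O(x)$ via concavity of the operator monotone function $h$, whereas the paper invokes the standard integral representation of operator monotone functions; both are routine and serve the same purpose.
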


\begin{proof}
(i) $\Rightarrow$ (ii).\enspace
Assumption (i) means that
$$
h(e^{-t})=\int_0^1e^{-\lambda t}\,d\nu(\lambda),\qquad t\in\bR.
$$
Since $\frac{d^n}{dt^n}\,d^{-\lambda t}=(-\lambda)^ne^{-\lambda t}$ for every $t\in\bR$, we
have
$$
(-1)^n\frac{d^n}{dt^n}\,h(e^{-t})=\int_0^1\lambda^ne^{-\lambda t}\,d\nu(\lambda)\ge0,
\qquad t\in\bR,
$$
and so (ii) follows.

(ii) $\Rightarrow$ (i).\enspace
For each $\alpha\in\bR$, (ii) implies that $f(e^{-(t-\alpha)})$ is completely monotone on
$[0,\infty)$. Hence by Bernstein's representation theorem \cite{Wi} there exists a unique
positive finite measure $\nu_\alpha$ on $[0,\infty)$ such that
$$
h(e^{-(t-\alpha)})=\int_0^\infty e^{-\lambda t}\,d\nu_\alpha(\lambda),
\qquad t\in[0,\infty).
$$
Whenever $\alpha\ge0$, by replacing $t\ge0$ with $t+\alpha\ge0$ we have
$$
h(e^{-t})=\int_0^\infty e^{-\lambda t}e^{-\alpha\lambda}\,d\nu_\alpha(\lambda),
\qquad t\in[0,\infty).
$$
Thanks to the uniqueness of the representation measure in Bernstein's representation, we
have $d\nu_0(\lambda)=e^{-\alpha\lambda}\,d\nu_\alpha(\lambda)$ so that
$d\nu_\alpha(\lambda)=e^{\alpha\lambda}\,d\nu_0(\lambda)$ on $[0,\infty)$. Therefore,
$$
h(e^{-(t-\alpha)})=\int_0^\infty e^{-\lambda(t-\alpha)}\,d\nu_0(\lambda)
$$
for every $t\ge0$ and every $\alpha\ge0$, which implies that
$$
h(e^{-t})=\int_0^\infty e^{-\lambda t}\,d\nu_0(\lambda),
\qquad t\in\bR,
$$
that is,
\begin{equation}\label{F-3.15}
h(x)=\int_0^\infty x^\lambda\,d\nu_0(\lambda),
\qquad x\in(0,\infty).
\end{equation}
Now suppose that $\nu_0((1,\infty))>0$. Then we have
$$
\frac{h(x)}{x}\ge\int_{(1,\infty)}x^{\lambda-1}\,d\nu_0(\lambda)
\nearrow+\infty\ \ \mbox{as $x\nearrow\infty$},
$$
which contradicts the fact that $\lim_{x\to\infty}h(x)/x<+\infty$, easily verified from
the integral expression of $h(x)$ \cite[(V.53)]{Bh}. Hence $\nu_0((1,\infty))=0$ and
\eqref{F-3.15} is the required integral expression in (i). The equality for $x=0$ also
follows by taking the limit of \eqref{F-3.15} as $x\searrow0$.
\end{proof}

In the following let us consider three typical families of operator monotone functions
discussed in \cite{HK0,HK}. Examples show that operator monotone functions having the integral
expression \eqref{F-3.13} are not many.

\begin{example}\rm
For each $\alpha\in[0,1]$ the function
$$
h_\alpha(x):=\frac{x^\alpha+x^{1-\alpha}}{2}
$$
is an operator monotone function on $[0,\infty)$. It is clearly a special form of
\eqref{F-3.13}.
\end{example}

\begin{example}\label{E-3.11}\rm
For each $p\in[-1,1]$ the function
$$
b_p(x):=\biggl(\frac{x^p+1}{2}\biggr)^{1/p},
$$
where $b_0(x):=\lim_{p\to0}b_p(x)=\sqrt x$, is the representing function of
the operator $p$-mean $\sigma_p$ such that $A\,\sigma_p\,B = A\,\beta_p\,B$ when $AB=BA$. 
The function $b_p(x)$ is geometrically convex if $0\le p\le 1$ and geometrically concave
if $-1\le p\le 0$. On the other hand, when $p=1/m$ with $m\in\bN$, since
$$
b_{1/m}(x)=\biggl(\frac{x^{1/m}+1}{2}\biggr)^m
=\frac{1}{2^m}\sum_{k=0}^m{m\choose k}x^{k/m},
$$
$b_{1/m}$ has an integral form \eqref{F-3.13}. 
Now, suppose that $p\in(0,1)$ and $b_p$ is represented as in \eqref{F-3.13}. By
Theorem \ref{T-3.9}, $(e^t+1)^q$ must be absolutely monotone on $\bR$, where
$q:=p^{-1}$. Then $(e^t+1)^q$ can extend to an entire function, see \cite{Wi}.
But this is not the case unless $q$ is a positive integer, because $(e^z+1)^q$ has
a singularity at $z=i\pi$ for a non-integer $q$. Thus, for $p\in(0,1)$
such that $p^{-1}\notin \bN$, $b_p(x)$ does not admit the expression \eqref{F-3.13}.
\end{example}

\begin{example}\label{E-3.12}\rm
For each $\alpha\in[-1,2]$ the function
$$
f_\alpha(x):=\frac{\alpha-1}{\alpha}\cdot\frac{x^\alpha-1}{x^{\alpha-1}-1}
$$
is operator monotone  on $[0,\infty)$. Here, $f_{1/2}(x)=\sqrt x$ and
$f_1(x)=(x-1)/\log x$, the representing function of the logarithmic operator mean.
When $1<\alpha\le2$, we have
$$
\frac{d^2}{dt^2}\log f_\alpha(e^t)
=\frac{e^{(2\alpha-1)t}}{(e^{\alpha t}-1)^2(e^{(\alpha-1)t}-1)^2}\,\ffi(t),
$$
where
$$
\ffi(t):=(\alpha-1)^2e^{\alpha t}-\alpha^2e^{(\alpha-1)t}+2(2\alpha-1)
-\alpha^2e^{-(\alpha-1)t}+(\alpha-1)^2e^{-\alpha t}.
$$
Since
$$
\ffi''(t)=\alpha^2(\alpha-1)^2\bigl\{e^{\alpha t}-e^{(\alpha-1)t}
-e^{-(\alpha-1)t}+e^{-\alpha t}\bigr\}\ge0,
$$
we see that $\ffi(t)\ge0$ and hence $\frac{d^2}{dt^2}\log f_\alpha(e^t)\ge0$. When
$-1\le\alpha<1$, we have
$$
\log f_\alpha(e^t)=\log\frac{1-\alpha}{\alpha}
+\log\frac{e^{\alpha t}-1}{1-e^{(\alpha-1)t}},
$$
and $\frac{d^2}{dt^2}\log f_\alpha(e^t)$ is given in
the same expression as above with the same function $\ffi(t)$. If $1/2\le\alpha<1$, then
$\ffi''(t)\ge0$ so that $\frac{d^2}{dt^2}\log f_\alpha(e^t)\ge0$. If $-1\le\alpha<1/2$,
then $\ffi''(t)<0$ for $t\ne0$ so that $\frac{d^2}{dt^2}\log f_\alpha(e^t)<0$ for all
$t\ne0$. Therefore, $ f_\alpha(x)$ is geometrically  convex  for $\alpha\in[1/2,2]$ and
geometrically concave for $\alpha\in[-1,1/2)$.

Now, suppose that $\alpha\in[1/2,2]\setminus\{1\}$ and $f_\alpha(x)$ is represented as in
\eqref{F-3.13}. Then by Theorem \ref{T-3.9},
$$
\frac{\alpha-1}{\alpha}\cdot\frac{e^{\alpha t}-1}{e^{(\alpha-1)t}-1}
=\frac{\alpha-1}{\alpha}\cdot
\frac{\sinh\frac{\alpha t}{2}}{\sinh\frac{(\alpha-1)t}{2}}\,e^{t/2}
$$
is absolutely monotone on $\bR$ so that $\sinh\alpha t/\sinh(\alpha-1)t$ can extend to an
entire function. Since $\sinh(\alpha-1)z=0$ at $z=i\pi/(\alpha-1)$, we must have
$\sinh i\alpha\pi/(\alpha-1)=0$ so that $\alpha/(\alpha-1)=m\in\bZ$. Hence
$\alpha=m/(m-1)$ with $m\in\bZ\setminus\{0,1\}$. When $\alpha=m/(m-1)$ with
$m\in\{2,3,\dots\}$, we have
$$
f_\alpha(x)=\frac{1}{m}\sum_{k=0}^{m-1}x^{\frac{kt}{m-1}},
$$
which is a special case of \eqref{F-3.13}. When $\alpha=-m/(-m-1)=m/(m+1)$ with
$m\in\{1,2,\dots\}$, we have
$$
f_\alpha(x)=\frac{1}{m}\sum_{k=1}^mx^{\frac{kt}{m+1}},
$$
which is also a particular form of \eqref{F-3.13}. Therefore, $f_\alpha$ admits the
expression \eqref{F-3.13} if and only if
$$
\alpha\in\biggl\{\frac{m}{m+1}:m=1,2,\dots\biggr\}
\cup\{1\}\cup\biggl\{\frac{m+1}{m}:m=1,2,\dots\biggr\}.
$$
Note that $f_{m/(m+1)}(x)\nearrow f_1(x)$ and $f_{(m+1)/m}(x)\searrow f_1(x)$ as
$m\to\infty$.
\end{example}

\vskip 5pt
In this section, we have been concerned with operator means $A\,\sigma\,B$ whose
representing functions $h(x)$ is such that $h(e^t)$ is absolutely monotone on $\bR$.
Equivalently, these operator means are averages of weighted geometric means
$A\,\#_\alpha\,B$ expressed as
\begin{equation}\label{F-3.16}
A\,\sigma\,B=\int_0^1A\,\#_\alpha\,B\,d\nu(\alpha)
\end{equation}
for some probability measure $\nu$ on $[0,1]$. Since the path
$\alpha\in[0,1]\mapsto A\,\#_\alpha\,B$ is the geodesic from $A$ to $B$ for a natural
Riemannian metric on the positive definite matrices (see \cite{Bh2} and also Section 5
below), we call such an operator mean a {\it geodesic mean}. The next section considerably
extends Proposition \ref{P-3.7}. Indeed, an inequality more general than \eqref{F-3.14}
will be given in Corollary \ref{C-4.8} below. But we gave a brief independent proof of
Proposition \ref{P-3.7} to make this section self-contained.

\section{Anti-norms and operator means}

A symmetric norm $\|\cdot\|$, i.e., a unitarily invariant norm on $\bM_n$, can be defined
by its restriction to the positive cone $\bM_n^+$. Symmetric norms restricted on $\bM_n^+$
are characterized by the following three properties:
(i) $\| \lambda A\| = \lambda\|A\|$  for all $A\in \bM_n^+ $ and all reals $\lambda\ge 0$,
(ii) $\| UAU^*\|$ for all $A\in \bM_n^+$ and all unitaries $U\in\bM_n$, and
(iii) $\|A\| \le \|  A+B\| \le   \|  A\|  + \|  B\|$ for all $A,\,B\in \bM_n^+ $.

This section continues the study of geodesic means defined by \eqref{F-3.16}. We will
extend Proposition \ref{P-3.7} and obtain a Jensen/Minkowski inequality for quite a large
class of functionals that we call anti-norms as those are similar to symmetric norms but
with a reverse inequality.

\begin{definition}\label{D-4.1}
A {\it symmetric anti-norm} $\|\cdot\|_!$ on $\bM_n^+$ is a non-negative continuous
functional such that
\begin{itemize}
\item[1.] $\| \lambda A\|_! = \lambda\|A\|_!$  for all $A\in \bM_n^+ $ and all reals
$\lambda\ge 0$,
\item[2.] $\|  A\|_! = \| UAU^*\|_!$  for all $A\in \bM_n^+ $ and all unitaries $U$,
\item[3.] $\|  A+B\|_! \ge   \|  A\|_!  + \|  B\|_!$ for all $A,\,B\in \bM_n^+ $.
\end{itemize}
If further $\|A\|_!=0$ entails $A=0$, then we say that the anti-norm $\|\cdot\|_!$ is
{\it regular}.
\end{definition}

This definition without the continuity assumption was first introduced in \cite{BH1}. The
continuity assumption is not essential,  but deleting it would lead to rather strange
concave functionals, not continuous on the boundary of $\bM_n^+$ such as
$\| A\|_!:={\mathrm{Tr\,}}A$ if $A$ is invertible and $\| A\|_!:=0$ if $A$ is not
invertible.

The next two examples come from \cite{BH1}.

\begin{example}\label{E-4.1}
The trace norm is  an anti-norm! More generally for $k=1,\dots,n$, we define the
{\it Ky Fan $k$-anti-norm} on $\bM_n^+$ as the sum of the $k$ smallest eigenvalues, i.e.,
$$
\|A\|_{\{ k\}} :=\sum_{j=1}^k\mu_{n+1-j}(A),
$$
where $\mu_1(A)\ge\dots\ge\mu_n(A)$ are as before the eigenvalues of $A$ in decreasing
order. The anti-norm $\|\cdot\|_{\{k\}}$ is not regular except for $k=n$ (the trace norm).
\end{example}

\begin{example}\label{E-4.3}
For $-p<0$ the negative Schatten anti-norms are
$$
\Vert A\Vert_{-p} := \left\{\sum_{j=1}^n \mu_j^{-p}(A)\right\}^{-1/p}.
$$
That $A\mapsto \|A\|_{-p}$ is a superadditive functional on $\bM_n^+$ was noticed in
\cite{BH1}.
\end{example}

\begin{example}\label{E-4.4}
More generally, for $-p<0$ and $k= 1,\dots,n$, the negative Schatten-Ky Fan anti-norms
are
$$
\Vert A\Vert_{-p,k} := \left\{\sum_{j=1}^{k} \mu_{n+1-j}^{-p}(A)\right\}^{-1/p}.
$$
By definition note that $\|A\|_{-p,k}=0$ unless $A$ is invertible.
That the Schatten-Ky Fan anti-norms  $A\mapsto \|A\|_{-p,k}$ are superadditive on 
$\bM_n^+$ is a special case of Proposition \ref{P-4.6} below.
\end{example}

\begin{example}\label{E-4.5}
For $k=1,\dots,n$ the functional
$$
\Delta_k(A):=\left\{\prod_{j=1}^k \mu_{n+1-j}(A)\right\}^{1/k}
$$
is a symmetric anti-norm on $\bM_n^+$. Note that
\begin{equation}\label{F-4.1}
\Delta_k(A)
=\lim_{p\searrow0}\Biggl\{{1\over k}\sum_{j=1}^k\mu_{n+1-j}^{-p}(A)\Biggr\}^{-1/p}.
\end{equation}
\end{example}

\vskip 5pt
These examples illustrate the following general fact. 

\begin{prop}\label{P-4.6}
Let $\|\cdot\|$ be a symmetric norm on $\bM_n$ and $p>0$.  For  $A\in\bM_n^+$ set
$$
\|A\|_!:=\begin{cases}
\|A^{-p}\|^{-1/p} & \text{if $A$ is invertible},\\
0 & \text{otherwise}.
\end{cases}
$$
Then $\|\cdot\|_!$ is a symmetric anti-norm. 
\end{prop}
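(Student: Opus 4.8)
The plan is to verify the three defining properties of Definition \ref{D-4.1} together with continuity, isolating superadditivity as the only substantial point. Write $\Phi$ for the symmetric gauge function associated with $\|\cdot\|$, so that for invertible $A\in\bM_n^+$ one has $\|A^{-p}\|=\Phi(\lambda_1(A)^{-p},\dots,\lambda_n(A)^{-p})$ and hence $\|A\|_!=\Phi(\lambda(A)^{-p})^{-1/p}$. Homogeneity (property~1) is immediate from $(\lambda A)^{-p}=\lambda^{-p}A^{-p}$ and homogeneity of $\|\cdot\|$; unitary invariance (property~2) follows from $(UAU^*)^{-p}=UA^{-p}U^*$ and invariance of $\|\cdot\|$. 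For continuity the only issue is at the non-invertible boundary: since every symmetric gauge satisfies $\Phi(w)\ge c\max_i|w_i|$ with $c=\Phi(e_1)>0$, one gets $\|A_k^{-p}\|\ge c\,\lambda_n(A_k)^{-p}\to\infty$ whenever $A_k\to A$ with $A$ singular (so $\lambda_n(A_k)\to0$); thus $\|A_k\|_!\to0$, matching the value $0$ assigned on the non-invertible part. So $\|\cdot\|_!$ is continuous and the whole content lies in superadditivity.

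First I would dispose of the degenerate cases via monotonicity: if $0<A\le B$ then $\lambda_j(A)\le\lambda_j(B)$ by Weyl's monotonicity, hence $\lambda_j(A)^{-p}\ge\lambda_j(B)^{-p}$ and $\|A^{-p}\|\ge\|B^{-p}\|$ by monotonicity of $\Phi$, giving $\|A\|_!\le\|B\|_!$. Consequently, if $A$ is singular then $\|A\|_!=0$ while $\|A+B\|_!\ge\|B\|_!$, so superadditivity is trivial. It remains to treat invertible $A,B$, and here I would reduce the matrix inequality to a commutative one. The right-hand majorization in \eqref{Fan-Lidskii} reads, at the level of eigenvalue vectors, $\lambda(A+B)\prec\lambda(A)^\downarrow+\lambda(B)^\downarrow$. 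Since $t\mapsto t^{-p}$ is convex on $(0,\infty)$ for every $p>0$, applying it turns this into a weak majorization (see \cite{MO}), $(\lambda_j(A+B)^{-p})\prec_w\bigl((\lambda(A)^\downarrow+\lambda(B)^\downarrow)_j^{-p}\bigr)$; Fan dominance then gives $\Phi(\lambda(A+B)^{-p})\le\Phi((\lambda(A)^\downarrow+\lambda(B)^\downarrow)^{-p})$, and raising to the decreasing power $-1/p$ yields $\|A+B\|_!\ge F(\lambda(A)^\downarrow+\lambda(B)^\downarrow)$, where $F(x):=\Phi(x^{-p})^{-1/p}$ is the functional induced on positive vectors. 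It therefore suffices to prove $F$ superadditive on $(0,\infty)^n$, for then the right-hand side is at least $F(\lambda(A)^\downarrow)+F(\lambda(B)^\downarrow)=\|A\|_!+\|B\|_!$. This route deliberately sidesteps operator convexity of $A\mapsto A^{-p}$, which would otherwise force the restriction $p\le1$.

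For the vector reverse Minkowski inequality I would dualize. Let $\Phi^*$ be the dual gauge; for $w\ge0$ one has $\Phi(w)=\sup\{\langle w,z\rangle:\Phi^*(z)\le1,\ z\ge0\}$, the restriction $z\ge0$ being legitimate by sign-invariance and monotonicity of $\Phi^*$. As $t\mapsto t^{-1/p}$ is decreasing, the supremum becomes an infimum,
$$
F(x)=\inf\bigl\{(\textstyle\sum_iz_i\,x_i^{-p})^{-1/p}:\Phi^*(z)\le1,\ z\ge0\bigr\}.
$$
For each fixed $z\ge0$ the weights can be absorbed, $\sum_iz_ix_i^{-p}=\sum_i(z_i^{-1/p}x_i)^{-p}$, so $x\mapsto(\sum_iz_ix_i^{-p})^{-1/p}$ is superadditive by the classical reverse Minkowski inequality for the exponent $-p$. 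An infimum of superadditive functionals is again superadditive, whence $F$ is superadditive and the proof concludes.

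The main obstacle is the passage from the trace functional (the case $\Phi=\|\cdot\|_1$, i.e.\ the negative Schatten anti-norm of Example \ref{E-4.3}) to an arbitrary symmetric norm while keeping every $p>0$ admissible. The device that resolves it is the two-stage reduction: Fan--Lidskii majorization plus Fan dominance reduces the noncommutative statement to the commutative functional $F$, and the duality $\Phi(w)=\sup_z\langle w,z\rangle$ reduces $F$ to the one-parameter scalar reverse Minkowski inequality. I expect the points needing the most care to be the justification that a convex map sends majorization to weak majorization (so that Fan dominance is applied in the correct direction) and the verification that the dual supremum may indeed be restricted to $z\ge0$.
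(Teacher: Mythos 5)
Your proof is correct, and its engine is the same as the paper's: write $F(x)=\Phi(x^{-p})^{-1/p}$ via gauge duality as an infimum of weighted functionals $\bigl(\sum_i z_i x_i^{-p}\bigr)^{-1/p}$ over the dual unit ball, prove superadditivity weight by weight, and pass the infimum through. Where you differ is in the two supporting steps. For the matrix-to-vector reduction, the paper verifies that $\Phi_!(a):=\Phi(a^{-p})^{-1/p}$ is a symmetric anti-gauge function and then invokes \cite[Proposition 3.2]{BH1}, while you perform the reduction by hand: the majorization \eqref{Fan-Lidskii} at the eigenvalue level, the standard fact that a convex function ($t\mapsto t^{-p}$ here) turns majorization into weak majorization, and Fan dominance. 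This makes your argument self-contained on that point (it is essentially what the cited proposition packages), and your separate Weyl-monotonicity step for singular summands is a correct substitute for what the cited machinery handles automatically, since $\Phi_!$ vanishes off $(0,\infty)^n$. For the weight-wise superadditivity, the paper proves concavity of $a\mapsto\bigl(\sum_i a_i^{-p}b_i\bigr)^{-1/p}$ by a second-derivative computation settled by the Cauchy--Schwarz inequality and then uses positive homogeneity, whereas you absorb the weights through $z_ix_i^{-p}=(z_i^{-1/p}x_i)^{-p}$ and quote the classical reverse Minkowski inequality for the exponent $-p$; your route is shorter and more elementary, at the cost of citing that classical scalar inequality and of a small bookkeeping point when some $z_i$ vanish (those terms simply drop out of the sum, and $z=0$ gives the value $+\infty$, harmless for the infimum --- the paper sidesteps this by normalizing $\Phi'(b)=1$ rather than $\Phi'(b)\le1$). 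The continuity arguments are the same estimate in two guises: your bound $\Phi(w)\ge\Phi(e_1)\max_i|w_i|$ plays the role of the paper's rank-one projection $P_l$.
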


A symmetric anti-norm $\|\cdot\|_!$ occurring as above is called a {\it derived anti-norm}.

\begin{proof}
Let us first show the continuity of $\|\cdot\|_!$. It suffices to check that if $\{A_l\}$
is a sequence of invertible matrices in $\bM_n^+$ converging to a non-invertible
$A\in\bM_n^+$, then $\|A_l\|_!\to0$. For such $\{A_l\}$, since
$\varepsilon_l:=\mu_n(A_l)\to\mu_n(A)=0$, we have
$A_l^{-p}\ge\varepsilon_l^{-p}P_l$ and
so $\|A_l^{-p}\|\ge\varepsilon_l^{-p}\|P_l\|$, where $P_l$ is a rank one projection onto
an eigenvector of $A_l$ corresponding to $\varepsilon_l$. Hence
$\|A_l\|_!\le\varepsilon_l\|P_l\|^{-1/p}\to0$ since $\|P_l\|$ is a positive constant.

Let $\Phi$ be the symmetric gauge function corresponding to $\|\cdot\|$. Define for
$a\in\bR^n_+$,
$$
\Phi_!(a):=\begin{cases}
\Phi(a^{-p})^{-1/p} & \text{if $a\in(0,\infty)^n$}, \\
0 & \text{otherwise}.
\end{cases}
$$
We will show that $\Phi_!$ is superadditive on $\bR^n_+$. Then $\Phi_!$ is a symmetric
anti-gauge function since it is clearly permutation-invariant and homogeneous.
Since we of course have $\|A\|_!=\Phi_!(\mu(A))$ for all $A\in\bM_n^+$, it follows from
\cite[Proposition 3.2]{BH1} that $\|\cdot\|_!$ is a symmetric anti-norm.

Let $\Phi'$ be the symmetric gauge function dual to $\Phi$, see \cite[(4.4.4)]{H1}. For any
$a\in(0,\infty)^n$ we have
$$
\Phi(a^{-p})=\sup\Biggl\{\sum_{i=1}^na_i^{-p}b_i:
b=(b_i)\in[0,\infty)^n,\,\Phi'(b)=1\Biggr\}
$$
so that
\begin{equation}\label{F-4.2}
\Phi(a^{-p})^{-1/p}=\inf\Biggl\{\Biggl(\sum_{i=1}^na_i^{-p}b_i\Biggr)^{-1/p}:
b=(b_i)\in[0,\infty)^n,\,\Phi'(b)=1\Biggr\}.
\end{equation}
Let $a\in(0,\infty)^n$, $b\in[0,\infty)^n$, and $x=(x_i)\in\bR^n$. For every $t\in\bR$
such that $a+tx=(a_i+tx_i)\in(0,\infty)^n$ we compute
$$
{d\over dt}\Biggl(\sum_i(a_i+tx_i)^{-p}b_i\Biggr)^{-{1\over p}}
=\Biggl(\sum_i(a_i+tx_i)^{-p}b_i\Biggr)^{-{1\over p}-1}
\Biggl(\sum_i(a_i+tx_i)^{-p-1}x_ib_i\Biggr)
$$
and
\begin{align*}
&{d^2\over dt^2}\Biggl(\sum_i(a_i+tx_i)^{-p}b_i\Biggr)^{-{1\over p}} \\
&\quad=(p+1)\Biggl(\sum_i(a_i+tx_i)^{-p}b_i\Biggr)^{-{1\over p}-2}
\Biggl(\sum_i(a_i+tx_i)^{-p-1}x_ib_i\Biggr)^2 \\
&\quad\quad+(-p-1)\Biggl(\sum_i(a_i+tx_i)^{-p}b_i\Biggr)^{-{1\over p}-1}
\Biggl(\sum_i(a_i+tx_i)^{-p-2}x_i^2b_i\Biggr) \\
&\quad=(p+1)\Biggl(\sum_i(a_i+tx_i)^{-p}b_i\Biggr)^{-{1\over p}-2} \\
&\quad\quad\times\Biggl\{\Biggl(\sum_i(a_i+tx_i)^{-p-1}x_ib_i\Biggr)^2
-\Biggl(\sum_i(a_i+tx_i)^{-p}b_i\Biggr)
\Biggl(\sum_i(a_i+tx_i)^{-p-2}x_i^2b_i\Biggr)\Biggr\} \\
&\quad\le0
\end{align*}
thanks to the Schwarz inequality. Therefore,
$a\in(0,\infty)^n\mapsto\bigl(\sum_{i=1}^na_i^{-p}b_i\bigr)^{-1/p}$ is concave and so
superadditive due to positive homogeneity. Hence for $a,\tilde a\in(0,\infty)^n$ and
$b\in[0,\infty)^n$ with $\Phi'(b)=1$ we have
\begin{align*}
\Biggl(\sum_{i=1}^n(a_i+\tilde a_i)^{-p}b_i\Biggr)^{-1/p}
&\ge\Biggl(\sum_{i=1}^na_i^{-p}b_i\Biggr)^{-1/p}
+\Biggl(\sum_{i=1}^n\tilde a_i^{-p}b_i\Biggr)^{-1/p} \\
&\ge\Phi(a^{-p})^{-1/p}+\Phi(\tilde a^{-p})^{-1/p}.
\end{align*}
Taking the infimum of the left-hand side over $b$ as in \eqref{F-4.2} gives the required
superadditivity of $\Phi_!$.
\end{proof}

\vskip 10pt
\begin{theorem}\label{T-4.7}
If $f(t)$ is a doubly concave function on an interval $\Omega\subset[0,\infty)$ and
$A,B\in\bM_n\{\Omega\}$, then
\begin{equation*}
\| f(A\,\sigma\,B) \|_! \ge \|f(A)\|_! \,\sigma\, \| f(B) \|_!
\end{equation*}
 for all derived anti-norms $\|\cdot\|_!$  and all geodesic means $\sigma$.
\end{theorem}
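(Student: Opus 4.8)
The plan is to convert the anti-norm inequality into a log-supermajorization and then exploit the averaging structure $A\,\sigma\,B=\int_0^1 A\,\#_\alpha\,B\,d\nu(\alpha)$ of a geodesic mean. The pivot is the observation that every derived anti-norm reverses log-supermajorization: if $X,Y\in\bM_n^+$ with $X\prec^{w(\log)}Y$, then $\|X\|_!\ge\|Y\|_!$. Writing $\|Z\|_!=\|Z^{-p}\|^{-1/p}$ and disposing of the non-invertible case (where the value is $0$) separately, one has $\mu_j(Z^{-p})=\mu_{n+1-j}(Z)^{-p}$, so $X\prec^{w(\log)}Y$ becomes the log-submajorization $X^{-p}\prec_{w(\log)}Y^{-p}$; since log-submajorization implies weak majorization, $\|X^{-p}\|\le\|Y^{-p}\|$, and taking $(\cdot)^{-1/p}$ gives the claim. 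I also record that an anti-norm, being positively homogeneous and superadditive, is monotone and concave, hence $\bigl\|\int X_\alpha\,d\nu\bigr\|_!\ge\int\|X_\alpha\|_!\,d\nu$ for any probability measure $\nu$.

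For the Jensen part, since a doubly concave $f$ is in particular concave, I would apply Theorem \ref{T-1.1} to the averaging maps $\bigoplus_i X_i\mapsto\sum_i\lambda_i X_i$ realizing finite convex combinations, and pass to the limit along Riemann sums of $\int_0^1\cdots\,d\nu(\alpha)$ (reducing first to invertible $A,B$ by continuity from above). As in the derivation of \eqref{F-2.3} this yields the supermajorization $f(A\,\sigma\,B)\prec^w\int_0^1 f(A\,\#_\alpha\,B)\,d\nu(\alpha)$, and then, using $f\ge0$ and the implication $\prec^w\Rightarrow\prec^{w(\log)}$ on $\bM_n^+$ from the proof of Theorem \ref{T-2.7}, the log-supermajorization $f(A\,\sigma\,B)\prec^{w(\log)}\int_0^1 f(A\,\#_\alpha\,B)\,d\nu(\alpha)$. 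Feeding this through the reversal pivot and the concavity of $\|\cdot\|_!$ gives
\[
\|f(A\,\sigma\,B)\|_!\ \ge\ \Bigl\|\int_0^1 f(A\,\#_\alpha\,B)\,d\nu(\alpha)\Bigr\|_!\ \ge\ \int_0^1\|f(A\,\#_\alpha\,B)\|_!\,d\nu(\alpha),
\]
so it suffices to prove the single-weight inequality $\|f(A\,\#_\alpha\,B)\|_!\ge\|f(A)\|_!\,\#_\alpha\,\|f(B)\|_!$; integrating it against $\nu$ and using $\int_0^1 a\,\#_\alpha\,b\,d\nu(\alpha)=a\,\sigma\,b$ for scalars then recovers the theorem.

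For the single-weight inequality I would split it as (a) $f(A\,\#_\alpha\,B)\prec^{w(\log)}f(A)\,\#_\alpha\,f(B)$ and (b) $\|f(A)\,\#_\alpha\,f(B)\|_!\ge\|f(A)\|_!\,\#_\alpha\,\|f(B)\|_!$, chaining (a) through the reversal pivot with (b). Step (b) is a reverse H\"older inequality for derived anti-norms: with $U=f(A)^{-1}$, $V=f(B)^{-1}$ and $(X\,\#_\alpha\,Y)^{-1}=X^{-1}\,\#_\alpha\,Y^{-1}$ it is equivalent to $\|(U\,\#_\alpha\,V)^p\|\le\|U^p\|^{1-\alpha}\|V^p\|^\alpha$, which I would obtain by raising the log-submajorization $U\,\#_\alpha\,V\prec_{w(\log)}U^{\downarrow}\,\#_\alpha\,V^{\downarrow}$ of Corollary \ref{C-3.6} to the power $p>0$ and then applying the H\"older inequality for symmetric norms.

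The main obstacle is step (a) for a general doubly concave $f$. Its scalar shadow is exactly the geometric concavity $f(a\,\#_\alpha\,b)\ge f(a)\,\#_\alpha\,f(b)$, which fixes the correct direction; the difficulty is to lift it to a noncommutative log-supermajorization. When $f(t)=t^q$ with $q\in(0,1]$ the scalar relation is an identity and (a) is precisely the Ando--Hiai log-majorization $(A\,\#_\alpha\,B)^q\prec_{(\log)}A^q\,\#_\alpha\,B^q$ \cite{AH}, which upgrades to log-supermajorization because the two sides have equal determinants. For a general doubly concave $f$ neither monotonicity nor a single power is available, and the inf-of-powers representation furnished by geometric concavity points the wrong way for a lower bound; I expect to reduce (a) to the settled power case by exploiting the closure of doubly concave functions under minima and geometric combinations (noted after the definition of double concavity) together with a continuity/approximation argument, or else to argue directly on eigenvalues by combining both concavity hypotheses as in the computation establishing \eqref{F-3.6}.
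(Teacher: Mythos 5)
Your outer architecture coincides with the paper's: your ``reversal pivot'' is exactly Lemma \ref{L-4.10} (proved the same way), the passage through Theorem \ref{T-1.1}, the concavity of anti-norms, and the reduction to the single-weight inequality $\|f(A\,\#_\alpha\,B)\|_!\ge\|f(A)\|_!\,\#_\alpha\,\|f(B)\|_!$ is exactly the paper's first half, and your step (b) is Lemma \ref{L-4.11}, again proved the same way (Corollary \ref{C-3.6} plus the H\"older inequality for symmetric gauge functions). The genuine gap is your step (a): the log-supermajorization $f(A\,\#_\alpha\,B)\prec^{w(\log)}f(A)\,\#_\alpha\,f(B)$ with non-commuting $A,B$ is never established, and you acknowledge you cannot prove it beyond the power case $f(t)=t^q$. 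The repair strategies you sketch do not close it. A general doubly concave function is not an infimum of concave powers $c\,t^{q}$, $q\in[0,1]$: for instance, the non-increasing doubly concave functions of Example 2.6 have supporting lines of negative slope in log-log coordinates. Moreover, minima interact badly with the claim: if $f=\min\{f_1,f_2\}$, then $f(A\,\#_\alpha\,B)\le f_i(A\,\#_\alpha\,B)$, which moves the left-hand side in the wrong direction, so knowing (a) for $f_1$ and $f_2$ gives nothing for $f$.

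The idea you are missing is that (a) is strictly stronger than what is needed, and the paper bypasses it by diagonalizing. Since the target right-hand side $\|f(A)\|_!\,\#_\alpha\,\|f(B)\|_!$ depends only on the eigenvalues of $f(A)$ and $f(B)$, you may replace $f(A),f(B)$ by the commuting matrices $f(A^\downarrow),f(B^\downarrow)$, and it suffices to prove $f(A\,\#_\alpha\,B)\prec^{w(\log)}f(A^\downarrow)\,\#_\alpha\,f(B^\downarrow)$. This follows in two short steps. First, Corollary \ref{C-3.6} gives the \emph{full} log-majorization $A\,\#_\alpha\,B\prec_{(\log)}A^\downarrow\,\#_\alpha\,B^\downarrow$ (equal determinants); since $s\mapsto\log f(e^s)$ is concave, applying it to this majorization of the log-eigenvalue vectors yields $f(A\,\#_\alpha\,B)\prec^{w(\log)}f(A^\downarrow\,\#_\alpha\,B^\downarrow)$ --- here no monotonicity of $f$ is required precisely because one starts from a genuine majorization with equality at $k=n$, not merely from $\prec_{w(\log)}$. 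Second, $A^\downarrow$ and $B^\downarrow$ commute, so scalar geometric concavity gives the operator inequality $f(A^\downarrow\,\#_\alpha\,B^\downarrow)\ge f(A^\downarrow)\,\#_\alpha\,f(B^\downarrow)$, which can be absorbed into the supermajorization by Weyl monotonicity. Now your own pivot (Lemma \ref{L-4.10}) and your step (b) (Lemma \ref{L-4.11}) finish the argument:
\begin{equation*}
\|f(A\,\#_\alpha\,B)\|_!\ \ge\ \|f(A^\downarrow)\,\#_\alpha\,f(B^\downarrow)\|_!\ \ge\ \|f(A^\downarrow)\|_!\,\#_\alpha\,\|f(B^\downarrow)\|_!\ =\ \|f(A)\|_!\,\#_\alpha\,\|f(B)\|_!,
\end{equation*}
and integrating against $\nu$ completes the proof exactly as you indicated.
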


\vskip 10pt
Applying the theorem to the anti-norms of Example \ref{E-4.4}, letting $p\searrow 0$ and
using \eqref{F-4.1} we obtain a generalization of Proposition \ref{P-3.7}.

\vskip 10pt
\begin{cor}\label{C-4.8}
If $f(t)$ is a doubly concave function on $\Omega$ and $A,B\in\bM_n\{\Omega\}$, then
\begin{equation*}
\Delta_k (f(A\,\sigma\, B)) \ge
\Delta_k(f(A))\, \sigma \,\Delta_k(f(B))
\end{equation*}
for all $k=1,\cdots,n$ and all geodesic means $\sigma$.
\end{cor}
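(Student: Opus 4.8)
The plan is to obtain this corollary as a direct specialization of Theorem \ref{T-4.7}, followed by a limiting procedure. First I would invoke the negative Schatten--Ky Fan anti-norms $\|\cdot\|_{-p,k}$ of Example \ref{E-4.4}: taking the symmetric norm $\|\cdot\|$ in Proposition \ref{P-4.6} to be the Ky Fan $k$-norm $X\mapsto\sum_{j=1}^k\mu_j(X)$, one has $\|A^{-p}\|^{-1/p}=\bigl\{\sum_{j=1}^k\mu_{n+1-j}^{-p}(A)\bigr\}^{-1/p}=\|A\|_{-p,k}$, since the largest eigenvalues of $A^{-p}$ come from the smallest of $A$; thus each $\|\cdot\|_{-p,k}$ is a derived anti-norm. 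Since $f$ is doubly concave and $A,B\in\bM_n\{\Omega\}$, Theorem \ref{T-4.7} applies and yields, for every fixed $p>0$ and every geodesic mean $\sigma$,
\[
\|f(A\,\sigma\,B)\|_{-p,k}\ge\|f(A)\|_{-p,k}\,\sigma\,\|f(B)\|_{-p,k}.
\]

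Next I would rewrite these anti-norms in the normalized form appearing in \eqref{F-4.1}. Setting $M_p(X):=\bigl\{\tfrac1k\sum_{j=1}^k\mu_{n+1-j}^{-p}(X)\bigr\}^{-1/p}$, we have $M_p(X)=k^{1/p}\|X\|_{-p,k}$, equivalently $\|X\|_{-p,k}=k^{-1/p}M_p(X)$. Substituting this into the displayed inequality and using that a scalar operator mean is positively homogeneous, $(\lambda a)\,\sigma\,(\lambda b)=\lambda(a\,\sigma\,b)$ with $\lambda=k^{-1/p}>0$, the common factor $k^{-1/p}$ cancels on both sides, leaving
\[
M_p(f(A\,\sigma\,B))\ge M_p(f(A))\,\sigma\,M_p(f(B)).
\]
Finally, letting $p\searrow0$ and invoking \eqref{F-4.1}, namely $M_p(X)\to\Delta_k(X)$ (the power-mean-to-geometric-mean limit over the $k$ smallest eigenvalues), together with the continuity of the scalar mean $\sigma$ on $(0,\infty)$, produces the asserted inequality $\Delta_k(f(A\,\sigma\,B))\ge\Delta_k(f(A))\,\sigma\,\Delta_k(f(B))$.

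The step requiring the most care is this passage to the limit, and with it the treatment of the non-invertible case. Both $\|\cdot\|_{-p,k}$ and $\Delta_k$ vanish exactly on the non-invertible matrices of $\bM_n^+$, and the convergence $M_p\to\Delta_k$ is transparent only for invertible arguments; moreover the scalar convention $a\,\sigma\,b=0$ when $a=0$ or $b=0$ (as recorded just before Corollary \ref{C-2.11}) must be matched on both sides. I expect the clean route is to prove the inequality first under the assumption that $f(A)$, $f(B)$ and $f(A\,\sigma\,B)$ are all invertible, where both the limit $M_p\to\Delta_k$ and the continuity of $\sigma$ are unproblematic, and then to recover the general statement by continuity of all functionals involved ($\Delta_k$ and the derived anti-norms are continuous on $\bM_n^+$, and $\sigma$ is continuous from above as in \eqref{F-3.2}). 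No genuinely new estimate beyond Theorem \ref{T-4.7} is needed; the content is the homogeneity rescaling and the limit.
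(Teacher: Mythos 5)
Your proposal is correct and follows exactly the paper's own route: the authors prove Corollary \ref{C-4.8} precisely by applying Theorem \ref{T-4.7} to the derived anti-norms $\|\cdot\|_{-p,k}$ of Example \ref{E-4.4} and letting $p\searrow0$ via \eqref{F-4.1}. Your careful handling of the normalization factor $k^{1/p}$ through homogeneity of the scalar mean, and of the non-invertible case (where both $\|\cdot\|_{-p,k}$ and $\Delta_k$ vanish, so the limit is trivial), fills in details the paper leaves implicit but introduces nothing new.
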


\begin{remark}
Theorem \ref{T-4.7} holds true for all derived anti-norms. But it does not hold for any
regular anti-norm. In fact, let $\|\cdot\|_!$ be a regular anti-norm and $\sigma$ be the
logarithmic operator mean (a typical example of geodesic mean). Let $A,B$ be
non-zero matrices in $\bM_n^+$ such that their support projections are orthogonal. Then
$A\,\sigma\,B=0$ and so $\|A\,\sigma\,B\|_!=0$. But $\|A\|_!\,\sigma\,\|B\|_!>0$ since
$\|A\|_!>0$ and $\|B\|_!>0$. So it seems that Theorem \ref{T-4.7} is rather optimal.
\end{remark}

\vskip 5pt
To prove Theorem \ref{T-4.7} we need two lemmas.

\vskip 10pt
\begin{lemma}\label{L-4.10}
Let $A,B\in\bM_n^+$. If $A\prec^{w(\log)}B$, then $\|A\|_!\ge \| B\|_!$ for all derived
anti-norms.
\end{lemma}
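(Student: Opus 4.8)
The plan is to reduce the anti-norm inequality to an ordinary symmetric-norm inequality for the inverse $p$th powers $A^{-p}$ and $B^{-p}$, where $\|\cdot\|_!=\|(\cdot)^{-p}\|^{-1/p}$ is the symmetric norm defining the derived anti-norm. First I would dispose of the degenerate case: if $B$ is not invertible then $\|B\|_!=0$ and there is nothing to prove, while if $B$ is invertible the case $k=1$ of $A\prec^{w(\log)}B$ gives $\lambda_n(A)\ge\lambda_n(B)>0$, so $A$ is invertible too and both anti-norms are computed by the formula of Proposition \ref{P-4.6}.

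Assume now $A,B$ invertible. The key observation is the index reversal under $X\mapsto X^{-p}$: since $\lambda_1(A)\ge\cdots\ge\lambda_n(A)>0$, the eigenvalues of $A^{-p}$ listed in decreasing order are $\mu_j(A^{-p})=\lambda_{n+1-j}(A)^{-p}$, and likewise for $B$. Hence
\begin{equation*}
\prod_{j=1}^k\mu_j(A^{-p})=\Biggl(\prod_{j=1}^k\lambda_{n+1-j}(A)\Biggr)^{-p},
\qquad k=1,\dots,n,
\end{equation*}
and the same for $B$. Raising the hypothesis $\prod_{j=1}^k\lambda_{n+1-j}(A)\ge\prod_{j=1}^k\lambda_{n+1-j}(B)$ to the power $-p$ reverses it, so $\prod_{j=1}^k\mu_j(A^{-p})\le\prod_{j=1}^k\mu_j(B^{-p})$ for all $k$; that is, $A^{-p}\prec_{w(\log)}B^{-p}$ (weak log-submajorization).

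Next I would pass from this multiplicative majorization to the additive one, exactly as in the proof of Theorem \ref{T-2.7}: since all eigenvalues are strictly positive, taking logarithms turns $A^{-p}\prec_{w(\log)}B^{-p}$ into a weak majorization of the log-spectra, and applying the increasing convex function $e^t$, which preserves weak majorization, gives $A^{-p}\prec_w B^{-p}$. By the Ky Fan dominance principle (submajorization of singular values implies domination in every symmetric norm; see \cite{Bh,H1}) this yields $\|A^{-p}\|\le\|B^{-p}\|$. Finally, raising to the power $-1/p$ reverses the inequality once more and gives $\|A\|_!=\|A^{-p}\|^{-1/p}\ge\|B^{-p}\|^{-1/p}=\|B\|_!$.

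The argument is essentially a chain of monotone changes of variable, so no single step is deep; the part demanding the most care is tracking the two inequality reversals, one from the negative exponent $-p$ in passing to $A^{-p}$ and one from the exponent $-1/p$ at the end, together with the index reversal $\mu_j(A^{-p})=\lambda_{n+1-j}(A)^{-p}$, which is precisely what converts the ``smallest-eigenvalue'' log-supermajorization of $A,B$ into a genuine log-submajorization of $A^{-p},B^{-p}$.
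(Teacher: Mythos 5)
Your proof is correct and follows essentially the same route as the paper's: the index-reversal identity $\prod_{j=1}^k\mu_j(A^{-p})=\bigl\{\prod_{j=1}^k\mu_{n+1-j}(A)\bigr\}^{-p}$ converting log-supermajorization of $A,B$ into log-submajorization of $A^{-p},B^{-p}$, then passage to weak submajorization, Ky Fan dominance for symmetric norms, and a final reversal under the power $-1/p$. The only difference is that you spell out the degenerate non-invertible case and the $\prec_{w(\log)}\Rightarrow\prec_w$ step, which the paper leaves implicit.
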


\vskip 10pt
\begin{proof}
Let $p>0$ and assume that $A\prec^{w(\log)}B$ and $B$ is invertible. Then by assumption,
$A$ is also invertible and we have
$$
\prod_{j=1}^k\mu_j(A^{-p})=\Biggl\{\prod_{j=1}^k\mu_{n+1-j}(A)\Biggr\}^{-p}
\le\Biggl\{\prod_{j=1}^k\mu_{n+1-j}(B)\Biggr\}^{-p}
=\prod_{j=1}^k\mu_j(B^{-p})
$$
for all $k=1,\dots,n$, i.e., $A^{-p}\prec_{w(\log)}B^{-p}$. This implies that
$A^{-p}\prec_w B^{-p}$ and so $\|A^{-p}\|\le\|B^{-p}\|$ for any symmetric norm $\|\cdot\|$.
Therefore, $\| A^{-p} \|^{-1/p} \ge \| B^{-p} \|^{-1/p}$, which means that
$\Vert A\Vert_! \ge \Vert B\Vert_!$ for any derived anti-norm.
\end{proof} 

\vskip 10pt
\begin{lemma}\label{L-4.11}
Let $A,B\in\bM_n^+$ and $\alpha\in[0,1]$. Then
$\|A\,\#_{\alpha}\,B\|_! \ge \| A\|_!\,\#_{\alpha}\,\| B\|_!$ for all derived anti-norms.
\end{lemma}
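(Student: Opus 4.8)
The plan is to reduce the inequality to the two facts already in hand: the second inequality of \eqref{F-3.11}, which recasts $A\,\#_\alpha\,B$ as a log-supermajorization, and Lemma \ref{L-4.10}, which turns log-supermajorizations into derived anti-norm inequalities. First I would clear the degenerate cases. A derived anti-norm vanishes on non-invertible matrices and $a\,\#_\alpha\,b=a^{1-\alpha}b^\alpha$ for scalars $a,b\ge0$; hence if $0<\alpha<1$ and one of $A,B$ fails to be invertible, the right-hand side is $0$ while the left-hand side is non-negative, and the cases $\alpha=0,1$ are equalities since $A\,\#_0\,B=A$ and $A\,\#_1\,B=B$. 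So I may assume $A,B$ invertible and $0<\alpha<1$. Write $a_j:=\mu_j(A)$ and $b_j:=\mu_j(B)$ in decreasing order and set $D:=\diag(a_j^{1-\alpha}b_j^\alpha)$; since both $a_j$ and $b_j$ decrease, $D$ has decreasing diagonal, so $\mu_j(D)=a_j^{1-\alpha}b_j^\alpha$.

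The central step reads the second inequality of \eqref{F-3.11} through the scalar identity $s\,\#_\alpha\,t=s^{1-\alpha}t^\alpha$: its right-hand side becomes $\prod_{j=n+1-k}^n\mu_j(D)$, so \eqref{F-3.11} says precisely that $\prod_{j=n+1-k}^n\mu_j(A\,\#_\alpha\,B)\ge\prod_{j=n+1-k}^n\mu_j(D)$ for all $k$, i.e. $A\,\#_\alpha\,B\prec^{w(\log)}D$. Lemma \ref{L-4.10} then gives $\|A\,\#_\alpha\,B\|_!\ge\|D\|_!$ (note that $D$ is invertible, as required there), and it only remains to prove the scalar reverse H\"older inequality $\|D\|_!\ge\|A\|_!^{1-\alpha}\|B\|_!^\alpha=\|A\|_!\,\#_\alpha\,\|B\|_!$.

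Since derived anti-norms depend only on eigenvalues, this last inequality is a statement about the underlying symmetric norm. Writing $\|A\|_!=\|A^{-p}\|^{-1/p}$ and putting $u_j:=a_j^{-p}$, $v_j:=b_j^{-p}$, one has $D^{-p}=\diag(u_j^{1-\alpha}v_j^\alpha)$, so the desired bound is equivalent, after raising to the power $-p$ (which reverses the inequality), to the ordinary H\"older inequality $\|\diag(u^{1-\alpha}v^\alpha)\|\le\|\diag(u)\|^{1-\alpha}\|\diag(v)\|^\alpha$ for the symmetric norm, equivalently $\Phi(u^{1-\alpha}v^\alpha)\le\Phi(u)^{1-\alpha}\Phi(v)^\alpha$ for its gauge function $\Phi$ (entrywise powers and product). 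I would prove this from the dual-gauge representation $\Phi(w)=\sup\{\sum_i w_ib_i:\Phi'(b)=1\}$ already used in the proof of Proposition \ref{P-4.6}: for each admissible $b$, since $u_i^{1-\alpha}v_i^\alpha b_i=(u_ib_i)^{1-\alpha}(v_ib_i)^\alpha$, the scalar H\"older inequality with exponents $1/(1-\alpha)$ and $1/\alpha$ gives $\sum_i(u_ib_i)^{1-\alpha}(v_ib_i)^\alpha\le(\sum_i u_ib_i)^{1-\alpha}(\sum_i v_ib_i)^\alpha\le\Phi(u)^{1-\alpha}\Phi(v)^\alpha$, and taking the supremum over $b$ finishes it.

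The genuinely new content is carried entirely by \eqref{F-3.11} and Lemma \ref{L-4.10}; the H\"older inequality for gauge functions is the routine remaining ingredient. The only points demanding care are the correct matching of orderings so that $\mu_j(D)=a_j^{1-\alpha}b_j^\alpha$, and the sign $-1/p<0$ that converts the gauge H\"older inequality into the reverse H\"older inequality needed for anti-norms. I do not expect a serious obstacle beyond keeping these bookkeeping steps straight.
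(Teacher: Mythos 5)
Your proof is correct and follows essentially the same route as the paper: the paper reduces to invertible $A,B$, uses the log-majorization \eqref{F-3.12} (equivalent in content to the second inequality of \eqref{F-3.11} that you invoke, since $A^\downarrow\,\#_\alpha\,B^\downarrow$ is exactly your diagonal matrix $D$), applies Lemma \ref{L-4.10}, and finishes with the H\"older inequality for symmetric gauge functions. The only difference is presentational: the paper cites the gauge-function H\"older inequality from Bhatia's book, whereas you rederive it from the dual-gauge representation and scalar H\"older, which is a perfectly valid substitute.
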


\begin{proof}
The case $\alpha=0$ or $1$ is trivial. Assume that $0<\alpha<1$. Since
$\|A\|_!\,\#_\alpha\,\|B\|_!=0$ if $A$ or $B$ is not invertible, we may assume that both
$A$ and $B$ are invertible. The log-majorization \eqref{F-3.12} implies that
$$
A\,\#_\alpha\,B\prec^{w(\log)}A^\downarrow\,\#_\alpha\,B^\downarrow
$$
so that the previous lemma yields
$$
\| A\,\#_{\alpha}\,B \|_! \ge \| A^{\downarrow}\,\#_{\alpha}\,B^{\downarrow} \|_!
$$
for any derived anti-norm. To complete the proof, we need to show that 
\begin{equation}\label{F-4.3}
\| A^{\downarrow}\,\#_{\alpha}\,B^{\downarrow} \|_! \ge
\| A^{\downarrow}\|_!  \,\#_{\alpha} \,  \|B^{\downarrow} \|_!.
\end{equation}
This follows from the H\"older inequality for a symmetric gauge function $\Phi$:
If $q\in(1,\infty)$ and $1/q+1/r=1$, then
$$
\Phi(a_1b_1,\dots,a_nb_n)\le\Phi(a_1^q,\dots,a_n^q)^{1/q}\Phi(b_1^r,\dots,b_n^r)^{1/r}
$$
for every $a,b\in[0,\infty)^n$, see \cite[IV.1.6]{Bh}. From this, for every
$a,b\in(0,\infty)^n$ we have
\begin{align*}
\Phi((a_1^{1-\alpha}b_1^\alpha)^{-p},\dots,(a_n^{1-\alpha}b_n^\alpha)^{-p})
&=\Phi((a_1^{-p})^{1-\alpha}(b_1^{-p})^\alpha),\dots,
(a_n^{-p})^{1-\alpha}(b_n^{-p})^\alpha)) \\
&\le\Phi(a_1^{-p},\dots,a_n^{-p})^{1-\alpha}\Phi(b_1^{-p},\dots,b_n^{-p})^\alpha
\end{align*}
so that
$$
\Phi((a\,\#_\alpha\,b)^{-p})^{-1/p}\ge\Phi(a^{-p})^{-1/p}\,\#_\alpha\,\Phi(b^{-p})^{-1/p}.
$$
Hence \eqref{F-4.3} holds.
\end{proof}

\vskip 5pt
We turn to the proof of the theorem.

\bigskip\noindent
{\it Proof of Theorem \ref{T-4.7}.}\enspace Let $\sigma$ be a geodesic mean so that
$$
A\,\sigma \,B = \int_0^1 A\,\#_{\alpha}\,B\,d\nu(\alpha)
$$
with a probability measure $\nu$ on $[0,1]$.
From Theorem \ref{T-1.1} we infer that
$$
f\left(A\,\sigma\, B\right) \ge \frac{1}{2}\left\{
U\left(\int_0^1 f(A\,\#_{\alpha}\,B)\, d\nu(\alpha)\right)U^*
+
V\left(\int_0^1 f(A\,\#_{\alpha}\,B)\,d\nu(\alpha)\right)V^*
\right\}
$$
for some unitaries $U,\,V$. When $\nu$ is supported on a finite set, this directly follows
from Theorem \ref{T-1.1} since $f(t)$ is concave. When $\nu$ is a general probability
measure, we choose a sequence $\{\nu_l\}$ of finitely supported probability measures on
$[0,1]$ such that
$$
\int_0^1A\,\#_\alpha\,B\,d\nu_l(\alpha)\longrightarrow
\int_0^1A\,\#_\alpha\,B\,d\nu(\alpha),
$$
$$
\int_0^1f(A\,\#_\alpha\,B)\,d\nu_l(\alpha)\longrightarrow
\int_0^1f(A\,\#_\alpha\,B)\,d\nu(\alpha).
$$
One can then show the assertion by a simple convergence argument. Hence, by the concavity
property of anti-norms,
\begin{equation}\label{F-4.4}
\| f(A\,\sigma\, B) \|_! \ge \int_0^1 \|f(A\,\#_{\alpha}\,B)\|_!\,d\nu(\alpha).
\end{equation}

Next, from the log-majorization \eqref{F-3.12} and the fact that $f(t)$ is geometrically
concave, it is easy to see that
\begin{equation*}
f(A\,\#_{\alpha}\,B) \prec^{w(\log)}
f\bigl(A^{\downarrow}\,\#_{\alpha}B^{\downarrow}\bigr).
\end{equation*}
Thanks to the geometric concavity of $f(t)$ again we also have
\begin{equation*}
 f\bigl(A^{\downarrow}\,\#_{\alpha}\,B^{\downarrow}\bigr) \ge
 f(A^{\downarrow})\,\#_{\alpha}\,f(B^{\downarrow}),
\end{equation*}
which combined with the previous log-supermajorization yields
\begin{equation*}
f(A\,\#_{\alpha}\,B) \prec^{w(\log)} f(A^{\downarrow})\,\#_{\alpha}\,f(B^{\downarrow}).
\end{equation*}
Hence, for any derived anti-norm $\|\cdot\|_!$, Lemma \ref{L-4.10} implies that
\begin{equation*}
\|f(A\,\#_{\alpha}\,B) \|_!\ge \|  f(A^{\downarrow})\,\#_{\alpha}\,f(B^{\downarrow})\|_!,
\end{equation*}
which combined with Lemma \ref{L-4.11} yields 
\begin{equation*}
\|f(A\,\#_{\alpha}\,B) \|_!\ge
\| f(A^{\downarrow})\|_!\,\#_{\alpha}\,\|f(B^{\downarrow})\|_!
=\| f(A)\|_!\,\#_{\alpha}\,\|f(B)\|_!.
\end{equation*}
Inserting this into the integral inequality \eqref{F-4.4} completes the proof.\qed

\bigskip
We do not know whether Theorem \ref{T-4.7} can be generalized or not to the whole class
of operator means whose representing functions are geometrically convex, especially
whether it holds for the operator $p$-means $\sigma_p$ with $0\le p\le1$ (see Example
\ref{E-3.11}). However, it is possible to state a version of Theorem \ref{T-4.7} for the
power $p$-means $\beta_p$. This is a consequence of Theorem \ref{T-2.7}. A special case
was given in Corollary \ref{C-2.11}.

\vskip 10pt
\begin{prop}\label{P-4.12}
If $f(t)$ is a doubly concave function on  $\Omega$ and  $A,B\in\bM_n\{\Omega\}$, then
\begin{equation*}
\| f(A\,\beta_p\,B) \|_! \ge \|f(A)\|_! \,\beta_p\, \| f(B) \|_!
\end{equation*}
for all derived anti-norms and all power $p$-means $\beta_p$ with $0<p\le 1$. Moreover,
this holds for $p=0$ too when $A,B$ are invertible.
\end{prop}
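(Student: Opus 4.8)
The plan is to mirror the structure of the proof of Theorem~\ref{T-4.7}, replacing the geodesic-mean machinery with the power-mean estimates from Section~2. The key insight is that Proposition~\ref{P-4.12} is the anti-norm companion of Corollary~\ref{C-2.8}, just as Theorem~\ref{T-4.7} is the anti-norm companion of Corollary~\ref{C-4.8}. First I would invoke Theorem~\ref{T-2.7}, which for a doubly concave $f$ gives the log-supermajorization
\begin{equation*}
f(\bE_p(Z)) \prec^{w(\log)} \bE_p(f(Z)).
\end{equation*}
Applying this to $Z=A\oplus B$ with $\bE$ the averaging map onto $\bM_n$ (exactly as in the derivation of Corollary~\ref{C-2.8}), so that $\bE_p(Z)=A\,\beta_p\,B$ and $\bE_p(f(Z))=f(A)\,\beta_p\,f(B)$, yields
\begin{equation*}
f(A\,\beta_p\,B) \prec^{w(\log)} f(A)\,\beta_p\,f(B).
\end{equation*}
By Lemma~\ref{L-4.10}, every derived anti-norm reverses log-supermajorization, so this immediately gives $\|f(A\,\beta_p\,B)\|_! \ge \|f(A)\,\beta_p\,f(B)\|_!$.

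The remaining task is to bound $\|f(A)\,\beta_p\,f(B)\|_!$ below by $\|f(A)\|_!\,\beta_p\,\|f(B)\|_!$. This is the scalar-superadditivity step analogous to Lemma~\ref{L-4.11}, but now for the power mean $\beta_p$ rather than the weighted geometric mean $\#_\alpha$. Setting $X:=f(A)$ and $Y:=f(B)$, I would reduce to a gauge-function inequality: writing $\|\cdot\|_!$ via its symmetric gauge function $\Phi$ as in Proposition~\ref{P-4.6}, it suffices to prove
\begin{equation*}
\Phi\bigl((a\,\beta_p\,b)^{-p'}\bigr)^{-1/p'}
\ge \Phi(a^{-p'})^{-1/p'}\,\beta_p\,\Phi(b^{-p'})^{-1/p'}
\end{equation*}
for the defining parameter $p'>0$ of the derived anti-norm, where $a\,\beta_p\,b=\bigl((a^p+b^p)/2\bigr)^{1/p}$ acts entrywise. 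The natural route is to verify that the map $a\mapsto\bigl(\sum_i (a_i\,\beta_p\,\tilde a_i)^{-p'} b_i\bigr)^{-1/p'}$ is superadditive, parallel to the Schwarz-inequality computation already carried out in Proposition~\ref{P-4.6}; this follows because $\beta_p$ is concave in each argument for $p\in(0,1]$ and the outer $(-1/p')$-power preserves the needed superadditivity.

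The main obstacle I anticipate is precisely this last gauge-function inequality: unlike the $\#_\alpha$ case, which dissolved cleanly into the multiplicative H\"older inequality, the power mean $\beta_p$ has no equally clean multiplicative factorization, so the superadditivity must be extracted from a concavity/second-derivative argument like the one in Proposition~\ref{P-4.6}. I would handle the $p=0$ limit last: since both sides are continuous in $p$ on invertible matrices and $A\,\beta_p\,B\to A\,\beta_0\,B=\exp\bigl((\log A+\log B)/2\bigr)$ with $f(A),f(B)$ invertible when $A,B$ are and $f\not\equiv0$, passing to the limit $p\searrow0$ in the established inequality gives the $p=0$ statement, exactly as in the final step of Theorem~\ref{T-2.7}.
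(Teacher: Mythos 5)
Your first step coincides with the paper's: Theorem~\ref{T-2.7} applied to $Z=A\oplus B$ and the averaging map, followed by Lemma~\ref{L-4.10}, gives $\|f(A\,\beta_p\,B)\|_!\ge\|f(A)\,\beta_p\,f(B)\|_!$ for derived anti-norms, and the remaining task is indeed the inequality \eqref{F-4.5}. The gap is in how you propose to prove \eqref{F-4.5}. Your claim that it ``suffices to prove'' the entrywise gauge-function inequality is unjustified: $X:=f(A)$ and $Y:=f(B)$ need not commute, so the eigenvalues of $X\,\beta_p\,Y=\bigl(\frac{X^p+Y^p}{2}\bigr)^{1/p}$ are not the entrywise means $\mu_j(X)\,\beta_p\,\mu_j(Y)$, and $\|X\,\beta_p\,Y\|_!=\Phi_!(\mu(X\,\beta_p\,Y))$ cannot be compared with $\Phi_!\bigl(\mu(X)\,\beta_p\,\mu(Y)\bigr)$ without a further argument. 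Filling this hole requires a genuine majorization step, for instance the Ky Fan majorization $\frac{X^p+Y^p}{2}\prec\frac{(X^\downarrow)^p+(Y^\downarrow)^p}{2}$ from \eqref{Fan-Lidskii}, which (since $\prec^w$ implies $\prec^{w(\log)}$ and log-supermajorization is stable under taking $1/p$-th powers) yields $X\,\beta_p\,Y\prec^{w(\log)}X^\downarrow\,\beta_p\,Y^\downarrow$, and then Lemma~\ref{L-4.10} once more; nothing of this sort appears in your sketch. In addition, the superadditivity you propose to verify --- superadditivity in $a$ with $\tilde a$ held fixed --- is not the statement you need (you need an inequality of $\beta_p$ type, jointly in the two vectors), and ``$\beta_p$ is concave in each argument'' together with ``the outer power preserves superadditivity'' is not a proof of it.

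The paper disposes of \eqref{F-4.5} with a single observation that removes the commutativity issue entirely and repairs your argument: if $\|\cdot\|_!$ is derived from the symmetric norm $\|\cdot\|$ with parameter $r>0$, then $W\mapsto\|W^{1/p}\|_!^p$ is again a derived anti-norm, namely the one derived from the same $\|\cdot\|$ with parameter $r/p$. By Proposition~\ref{P-4.6} this functional is superadditive on $\bM_n^+$ at the matrix level, and applying that superadditivity to $W=\frac{X^p+Y^p}{2}$ gives \eqref{F-4.6}, hence \eqref{F-4.5}, in one line. This reparametrization is also the correct way to complete your vector-level computation: substituting $u_i=a_i^p$ turns the desired $\beta_p$-inequality into ordinary superadditivity of $u\mapsto\Phi\bigl(u^{-r/p}\bigr)^{-p/r}$, i.e., into exactly the Schwarz-inequality computation of Proposition~\ref{P-4.6} with exponent $r/p$; but without the matrix-to-vector reduction this only treats commuting $X,Y$. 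Your handling of the $p=0$ case by passing to the limit is fine and agrees with the paper.
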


\vskip 10pt
\begin{proof}
Assume that $0<p\le1$. By Theorem \ref{T-2.7}, arguing as for Corollary \ref{C-2.8},
we have 
\begin{equation*}
\|f(A\,\beta_p\,B) \|_! \ge \|f(A)\,\beta_p\,f(B)\|_!
\end{equation*}
for all symmetric anti-norms. It then suffices to show that, in case of an derived
anti-norm, one has
\begin{equation}\label{F-4.5}
\|X\,\beta_p\,Y\|_! \ge \| X\|_!\,\beta_p\,\| Y\|_!
\end{equation}
or equivalently,
$$
\left\|\left(\frac{X^p + Y^p}{2}\right)^{1/p}\right\|_! \ge
\left(\frac{\|X\|_!^p +\|Y\|_!^p }{2} \right)^{1/p}
$$
for all $X,Y\in \bM_n^+$. By taking the $p$th power of both sides, this is equivalent to
\begin{equation}\label{F-4.6}
\left\|(X^p + Y^p)^{1/p}\right\|_!^p \ge \|X\|_!^p +\|Y\|_!^p. 
\end{equation}
To check that \eqref{F-4.6} does hold, note that if $\|\cdot\|_!$ is derived from a
symmetric norm $\|\cdot\|$ and a scalar $r>0$, then $X\mapsto \| X^{1/p}\|_!^p$ is a
derived anti-norm from $\|\cdot \|$ and $pr>0$. Therefore, \eqref{F-4.6} and hence
\eqref{F-4.5} hold. The case $p=0$ is immediate by taking the limit from the case $0<p\le1$.
\end{proof}

For symmetric norms, we could expect a result similar to the previous proposition by using
Corollary \ref{C-2.14}.  But this is not possible: In general, if $g(t)$ is a doubly convex
function on $[0,\infty)$, $g(0)=0$, and $A,B\in\bM_n\{\Omega\}$, then neither
$
\| g(A\,\beta_q\,B) \| \ge \|g(A)\| \,\beta_q\, \| g(B) \|
$
nor its reversed inequality do hold for symmetric norms and power $q$-means $\beta_q$ with
$q>1$.

The last result of this section is the symmetric norm version of Theorem \ref{T-4.7}.
The proof is similar to that of Theorem \ref{T-4.7} by using the convex version of Theorem
\ref{T-1.1} and the symmetric norm versions of Lemmas \ref{L-4.10} and \ref{L-4.11};
namely, $A\prec_{w(\log)}B$ entails $\|A\|\le\|B\|$, and
$\|A\,\#_\alpha\,B\|\le\|A\|\,\#_\alpha\,\|B\|$ for every $A,B\in\bM_n^+$.

\vskip 10pt
\begin{prop}\label{P-4.13}
If $g(t)$ is a doubly convex function on $\Omega$ and $A,B\in\bM_n\{\Omega\}$, then
\begin{equation*}
\| g(A\,\sigma\, B) \| \le \|g(A)\|\,\sigma\, \| g(B) \|
\end{equation*}
for all symmetric norms $\|\cdot\|$ and all geodesic means $\sigma$.
\end{prop}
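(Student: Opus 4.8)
The plan is to run the proof of Theorem \ref{T-4.7} verbatim with every inequality reversed, replacing concavity by convexity and log-supermajorization by log-submajorization. First I would write the geodesic mean as $A\,\sigma\,B=\int_0^1 A\,\#_\alpha\,B\,d\nu(\alpha)$ for a probability measure $\nu$ on $[0,1]$ and apply the convex version of Theorem \ref{T-1.1} (the one already used in the proof of Proposition \ref{P-2.13}) to the convex function $g$. Treating a finitely supported $\nu$ first, so that $A\,\sigma\,B$ is a genuine convex combination, and then approximating a general $\nu$ by finitely supported $\{\nu_l\}$ for which both $\int_0^1 A\,\#_\alpha\,B\,d\nu_l$ and $\int_0^1 g(A\,\#_\alpha\,B)\,d\nu_l$ converge, this produces unitaries $U,V\in\bM_n$ with
\[
g(A\,\sigma\,B)\le\frac12\Bigl\{U\Bigl(\int_0^1 g(A\,\#_\alpha\,B)\,d\nu(\alpha)\Bigr)U^*+V\Bigl(\int_0^1 g(A\,\#_\alpha\,B)\,d\nu(\alpha)\Bigr)V^*\Bigr\}.
\]

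Next I would pass to the symmetric norm. Since $g\ge0$ both sides are positive, so monotonicity of $\|\cdot\|$ on $\bM_n^+$, together with the triangle inequality and unitary invariance, gives $\|g(A\,\sigma\,B)\|\le\bigl\|\int_0^1 g(A\,\#_\alpha\,B)\,d\nu(\alpha)\bigr\|$, and subadditivity of the norm (extended to the integral) then yields
\[
\|g(A\,\sigma\,B)\|\le\int_0^1\|g(A\,\#_\alpha\,B)\|\,d\nu(\alpha).
\]
This is the exact dual of \eqref{F-4.4}: the concavity (superadditivity) of the anti-norm is now replaced by the convexity (subadditivity) of the symmetric norm.

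The core is the fiberwise estimate $\|g(A\,\#_\alpha\,B)\|\le\|g(A)\|\,\#_\alpha\,\|g(B)\|$. To get it I would first derive the log-submajorization $g(A\,\#_\alpha\,B)\prec_{w(\log)}g(A^\downarrow)\,\#_\alpha\,g(B^\downarrow)$. Indeed, the log-majorization \eqref{F-3.12} says $\log\mu(A\,\#_\alpha\,B)\prec\log\mu(A^\downarrow\,\#_\alpha\,B^\downarrow)$; since geometric convexity of $g$ means $s\mapsto\log g(e^s)$ is convex and convex functions send ordinary majorization into weak majorization, applying this to the log-eigenvalue vectors gives $g(A\,\#_\alpha\,B)\prec_{w(\log)}g(A^\downarrow\,\#_\alpha\,B^\downarrow)$. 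The diagonal matrix $g(A^\downarrow\,\#_\alpha\,B^\downarrow)$ has entries $g(\mu_j(A)^{1-\alpha}\mu_j(B)^\alpha)$, and the pointwise geometric convexity inequality $g(x^{1-\alpha}y^\alpha)\le g(x)^{1-\alpha}g(y)^\alpha$ shows $g(A^\downarrow\,\#_\alpha\,B^\downarrow)\le g(A^\downarrow)\,\#_\alpha\,g(B^\downarrow)$, hence also $\prec_{w(\log)}$; transitivity of log-submajorization gives the claim. Now the symmetric norm version of Lemma \ref{L-4.10} ($A\prec_{w(\log)}B\Rightarrow\|A\|\le\|B\|$) and then of Lemma \ref{L-4.11} ($\|A\,\#_\alpha\,B\|\le\|A\|\,\#_\alpha\,\|B\|$, proved from the genuine Hölder inequality for the gauge function in place of the reverse Hölder inequality), together with $\|g(A^\downarrow)\|=\|g(A)\|$, yield the fiberwise estimate. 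Substituting it into the integral bound and using $\int_0^1\|g(A)\|\,\#_\alpha\,\|g(B)\|\,d\nu(\alpha)=\|g(A)\|\,\sigma\,\|g(B)\|$ for the scalars $\|g(A)\|,\|g(B)\|$ completes the proof.

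I expect the only genuinely delicate point to be this conversion of the full log-majorization \eqref{F-3.12} into a log-submajorization after applying $g$: a doubly convex $g$ need not be monotone (negative exponents are permitted among the doubly convex power sums), so one cannot simply track the ordering of $g(\mu_j(\cdot))$. The clean route is the majorization-to-weak-majorization property of convex functions applied to the logarithms of the eigenvalues, which is insensitive to how $g$ reorders them and is exactly what the geometric convexity of $g$ supplies. Everything else is the term-by-term dual of Theorem \ref{T-4.7}.
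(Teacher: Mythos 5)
Your proposal is correct and follows exactly the route the paper intends: the paper proves this proposition only by a one-sentence remark that one should repeat the proof of Theorem \ref{T-4.7} with the convex version of Theorem \ref{T-1.1} and the symmetric-norm versions of Lemmas \ref{L-4.10} and \ref{L-4.11} (namely $A\prec_{w(\log)}B\Rightarrow\|A\|\le\|B\|$ and $\|A\,\#_\alpha\,B\|\le\|A\|\,\#_\alpha\,\|B\|$), which is precisely your term-by-term dualization. Your careful handling of the non-monotonicity of $g$ via the fact that convex functions carry majorization of the log-eigenvalue vectors into weak submajorization supplies the detail the paper glosses over with ``it is easy to see,'' and it is the right argument.
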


\section{Geodesic means for several matrices}

In this section we will extend geodesic means introduced in Section 3 to those for several
variables based on the Riemannian geometric approach in \cite{Mo,BH}. Let $\bP_n$ denote
the set of $n\times n$ positive definite matrices. It possesses a natural Riemannian
manifold structure and the induced geodesic distance is given as
$$
\delta(A,B)=\|\log A^{-1/2}BA^{-1/2}\|_2
=\Biggl\{\sum_{i=1}^n\log^2\lambda_i(A^{-1}B)\Biggr\}^{1/2},
\qquad A,B\in\bP_n,
$$
where $\lambda_i(A^{-1}B)$'s are the eigenvalues of $A^{-1}B$. Moreover, the geodesic path
joining $A,B$ is the weighted geometric means $A\,\#_t\,B$, $t\in[0,1]$. Note that this
$(\bP_n,\delta)$ is an example of so-called NPC spaces (nonpositively curved metric spaces),
whose theory has recently been developed extensively as seen in \cite{St}. Now, let
$\bw=(w_1,\dots,w_m)$ be a weight vector, i.e., $w_1,\dots,w_m\ge0$ and $\sum_{i=1}^mw_i=1$.
Given $m$ matrices $A_1,\dots,A_m\in\bP_n$, the {\it weighted geometric mean}
$G_m(\bw;A_1,\dots,A_m)$ is defined as a unique minimizer of the weighted sum of the
squares of distances, i.e.,
\begin{equation}\label{F-5.1}
G_m(\bw;A_1,\dots,A_m):=\mathop{\arg\min}_{X\in\bP_n}\sum_{i=1}^mw_i\delta^2(X,A_i),
\end{equation}
which is also called the {\it weighted least squares mean}, see \cite{LL}. The non-weighted
$m$-variable geometric mean is \eqref{F-5.1} when $\bw=(1/m,\dots,1/m)$. When $m=2$,
$G_2(1-t,t;A,B)=A\,\#_t\,B$ for $t\in[0,1]$ and $A,B\in\bP_n$. Below we will briefly write
$G_m(\bw;\bA)$ for $G_m(\bw;A_1,\dots,A_m)$ for $m$-tuples $\bA:=(A_1,\dots,A_m)$.

In \cite{LL} Lawson and Lim proved the monotonicity property of $G_m(\bw;\bA)$ by using
a powerful probabilistic tool in NPC spaces, see \cite[Theorem 4.7]{St}. The tool is
regarded as a sort of strong law of large numbers in NPC spaces, which will also be
crucial in our discussion below. So in the next lemma let us state it in a form
specialized to our purpose. For $A_1,\dots,A_m\in\bP_n$ the {\it inductive mean}
$S_m(A_1,\dots,A_m)$ is inductively defined as follows: $S_1(A_1):=A_1$ and
$S_k(A_1,\dots,A_k):=S_{k-1}(A_1,\dots,A_{k-1})\,\#_{1/k}\,A_k$ for $k=2,\dots,m$.

\begin{lemma}\label{L-5.1}
Let $A_1,\dots,A_m\in\bP_n$ and let $X_k:\Omega\to\bP_n$, $k\in\bN$, be a sequence of
i.i.d.\ random variables on a probability space $(\Omega,P)$ with distribution
$\sum_{i=1}^mw_i\delta_{A_i}$. Then $S_k(X_1(\omega),\dots,X_k(\omega))\to G_m(\bw;\bA)$
as $k\to\infty$ for almost every $\omega\in\Omega$.
\end{lemma}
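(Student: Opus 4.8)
The plan is to recognize this lemma as the specialization to $(\bP_n,\delta)$ of Sturm's strong law of large numbers for i.i.d.\ random variables valued in a global NPC space, so that the proof reduces to verifying that the hypotheses of \cite[Theorem 4.7]{St} are met in this concrete setting.

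First I would collect the three structural facts that place $(\bP_n,\delta)$ inside Sturm's framework. (a) $(\bP_n,\delta)$ is a complete, simply connected Riemannian manifold of nonpositive sectional curvature, hence a global NPC (Hadamard) space; this is exactly the fact recalled at the start of the section. (b) The unique minimizing geodesic joining $A$ to $B$ is the path $t\mapsto A\,\#_t\,B$, $t\in[0,1]$, so the point at parameter $1/k$ from $A$ on this geodesic is precisely $A\,\#_{1/k}\,B$. (c) The measure $\mu:=\sum_{i=1}^m w_i\delta_{A_i}$ is finitely supported, hence has finite moments of all orders, and its barycenter $\mathop{\arg\min}_{X\in\bP_n}\sum_{i=1}^m w_i\delta^2(X,A_i)$ coincides by definition \eqref{F-5.1} with $G_m(\bw;\bA)$.

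Next I would match the two inductive constructions. By (b), Sturm's inductive random walk $s_1:=X_1(\omega)$, $s_k:=$ the point at parameter $1/k$ on the geodesic from $s_{k-1}$ to $X_k(\omega)$, is nothing but $s_k=s_{k-1}\,\#_{1/k}\,X_k(\omega)$, which is the recursion defining $S_k(X_1(\omega),\dots,X_k(\omega))$. Thus the two sequences agree for every $\omega\in\Omega$. With these identifications in place, \cite[Theorem 4.7]{St} applies and yields $S_k(X_1(\omega),\dots,X_k(\omega))\to G_m(\bw;\bA)$ for $P$-almost every $\omega$.

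The point requiring care — and the one I would treat as the main obstacle — is confirming that the completeness and nonpositive-curvature hypotheses of Sturm's theorem genuinely hold for $(\bP_n,\delta)$ \emph{with the geodesics explicitly realized as the weighted geometric mean paths}, since this is what lets the abstract inductive mean be rewritten through $\#_{1/k}$. Once the NPC structure and the geodesic description are secured, the moment condition needed for existence of the barycenter and for the law of large numbers is automatic, because $\mu$ is carried by finitely many points; no further probabilistic estimate is required, and the convergence follows verbatim from the cited theorem.
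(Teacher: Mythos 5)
Your proposal is correct and takes essentially the same route as the paper: the paper offers no independent proof of this lemma, presenting it explicitly as Sturm's strong law of large numbers \cite[Theorem 4.7]{St} stated ``in a form specialized to our purpose,'' i.e., to the NPC space $(\bP_n,\delta)$ with geodesics $t\mapsto A\,\#_t\,B$ and the finitely supported distribution $\sum_{i=1}^m w_i\delta_{A_i}$, whose barycenter is $G_m(\bw;\bA)$ by definition \eqref{F-5.1}. Your added verifications (identifying $S_k$ with Sturm's inductive walk via $\#_{1/k}$, and noting that the moment condition is automatic for a finitely supported measure) are precisely the identifications the paper leaves implicit.
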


A construction of the i.i.d.\ sequence $X_m$ in the corollary is easy: Let
$\Omega_0:=\{1,\dots,k\}$ with probability $P_0:=\sum_{i=1}^kw_i\delta_i$, and let
$(\Omega,P):=\prod_{m=1}^\infty(\Omega_0,P_0)$ be the infinite tensor product of
$(\Omega_0,P_0)$. Set $X_m(\omega):=A_{j_m}$ for $m\in\bN$ and
$\omega=(j_1,j_2,\dots)\in\Omega$.

To extend geodesic means for two matrices to those for $m$ matrices, let $\Sigma_m$ denote
the simplex of probability vectors on $m$ points, i.e.,
$\Sigma_m:=\{\bw=(w_1,\dots,w_m):w_i\ge0,\ \sum_{i=1}^mw_i=1\}$. For any probability
measure $\nu$ on $\Sigma_m$ we define for $A_1,\dots,A_m\in\bP_n$,
\begin{equation}\label{F-5.2}
\sigma_m(A_1,\dots,A_m)=\sigma_m(\bA):=\int_{\Sigma_m}G_m(\bw;\bA)\,d\nu(\bw),
\end{equation}
and call it an $m$-variable {\it geodesic mean}. In particular, with the uniform probability
measure $\nu_0$ on $\Sigma_m$ we define the $m$-variable {\it logarithmic mean} by
$$
L_m(A_1,\dots,A_m)=L_m(\bA):=\int_{\Sigma_m}G_k(\bw;\bA)\,d\nu_0(\bw),
$$
which extends the logarithmic mean $A\,\lambda\,B$ for two matrices since
$$
L_2(A,B)=\int_0^1A\,\#_t\,B\,dt=A\,\lambda\,B.
$$

\begin{prop}
Let $\sigma_m(\bA)$ be an $m$-variable geodesic mean defined in \eqref{F-5.2}. Then, for
every $A_1,\dots,A_m\in\bP_n$,
$$
\Biggl(\sum_{i=1}^m\overline{w}_iA_i^{-1}\Biggr)^{-1}\le
\sigma_m(\bA)\le\sum_{i=1}^m\overline{w}_iA_i,
$$
where $\overline{w}_i:=\int_{\Sigma_m}w_i\,d\nu(\bw)$, $1\le i\le m$. In particular,
$$
m\Biggl(\sum_{i=1}^mA_i^{-1}\Biggr)^{-1}\le
L_m(\bA)\le{1\over m}\sum_{i=1}^mA_i.
$$
\end{prop}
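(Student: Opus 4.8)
The plan is to derive both bounds of the proposition from the corresponding two-sided estimate for the multivariate weighted geometric mean, namely the operator arithmetic--geometric--harmonic inequality
$$
\Biggl(\sum_{i=1}^m w_i A_i^{-1}\Biggr)^{-1}\le G_m(\bw;\bA)\le\sum_{i=1}^m w_iA_i,
\qquad \bw\in\Sigma_m,
$$
and then to integrate over $\Sigma_m$ against $\nu$. Granting this chain, the upper bound of the proposition is immediate by linearity and order-preservation of the integral: $\sigma_m(\bA)=\int_{\Sigma_m}G_m(\bw;\bA)\,d\nu(\bw)\le\int_{\Sigma_m}\sum_i w_iA_i\,d\nu(\bw)=\sum_i\overline{w}_iA_i$. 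For the lower bound, writing $M(\bw):=\sum_i w_iA_i^{-1}$, integration gives $\sigma_m(\bA)\ge\int_{\Sigma_m}M(\bw)^{-1}\,d\nu(\bw)$, and since $t\mapsto t^{-1}$ is operator convex on $(0,\infty)$ the integral Jensen inequality yields $\int_{\Sigma_m}M(\bw)^{-1}\,d\nu(\bw)\ge\bigl(\int_{\Sigma_m}M(\bw)\,d\nu(\bw)\bigr)^{-1}=\bigl(\sum_i\overline{w}_iA_i^{-1}\bigr)^{-1}$, as required. The displayed special case for $L_m$ then follows by taking $\nu=\nu_0$, for which symmetry gives $\overline{w}_i=1/m$ for each $i$.

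It remains to prove the arithmetic--geometric--harmonic chain for $G_m(\bw;\bA)$, and here I would use the strong law of large numbers in NPC spaces, Lemma \ref{L-5.1}. Let $X_1,X_2,\dots$ be the i.i.d.\ sequence with distribution $\sum_i w_i\delta_{A_i}$, and let $S_k$ be the inductive mean $S_k=S_{k-1}\,\#_{1/k}\,X_k$. Comparing $S_k$ with the ordinary arithmetic inductive mean $T_k:=\tfrac1k\sum_{j=1}^kX_j$ (which satisfies $T_k=\tfrac{k-1}{k}T_{k-1}+\tfrac1k X_k$), I claim $S_k\le T_k$ for every $k$ by induction: the base case is $S_1=X_1=T_1$, and if $S_{k-1}\le T_{k-1}$ then the two-variable arithmetic--geometric inequality $A\,\#_t\,B\le(1-t)A+tB$ gives $S_k\le\tfrac{k-1}{k}S_{k-1}+\tfrac1k X_k\le\tfrac{k-1}{k}T_{k-1}+\tfrac1k X_k=T_k$. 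Now Lemma \ref{L-5.1} gives $S_k\to G_m(\bw;\bA)$ almost surely, while the classical strong law gives $T_k\to\sum_i w_iA_i$ almost surely; passing to the limit in $S_k\le T_k$ yields the upper bound $G_m(\bw;\bA)\le\sum_i w_iA_i$.

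For the harmonic (lower) bound I would exploit that inversion is an isometry of $(\bP_n,\delta)$, since $\delta(A^{-1},B^{-1})=\delta(A,B)$. Consequently the least-squares characterization \eqref{F-5.1} gives $G_m(\bw;\bA^{-1})=G_m(\bw;\bA)^{-1}$, where $\bA^{-1}:=(A_1^{-1},\dots,A_m^{-1})$. Applying the already-proved arithmetic--geometric bound to the tuple $\bA^{-1}$ and inverting then yields $G_m(\bw;\bA)=G_m(\bw;\bA^{-1})^{-1}\ge\bigl(\sum_i w_iA_i^{-1}\bigr)^{-1}$. Alternatively, the identity $(S_{k-1}\,\#_{1/k}\,X_k)^{-1}=S_{k-1}^{-1}\,\#_{1/k}\,X_k^{-1}$ shows that $S_k^{-1}$ is the inductive mean of the $X_j^{-1}$, so one may instead run the same comparison argument directly on the inverses.

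I expect the main obstacle to be the arithmetic--geometric inequality for $G_m$ itself, that is, the step linking the implicitly defined least-squares mean to the explicit arithmetic mean. Because $G_m(\bw;\bA)$ has no closed form for $m\ge3$, the inequality cannot be read off algebraically, and the probabilistic approximation by inductive means through Lemma \ref{L-5.1} is essential. The one genuinely new ingredient beyond the two-variable case is verifying that the monotone comparison $S_k\le T_k$ survives the entire inductive construction; the computation above confirms this using only the two-variable arithmetic--geometric inequality and the linearity of the operator order, so no additional convexity of the multivariate mean is needed.
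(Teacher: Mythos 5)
Your proof is correct, and its treatment of the proposition proper is exactly the paper's: integrate the two-sided arithmetic--geometric--harmonic bound for $G_m(\bw;\bA)$ over $\Sigma_m$ against $\nu$, and use operator convexity of $x\mapsto x^{-1}$ to pull the inverse outside the integral for the lower bound (the identification $\overline{w}_i=1/m$ for the uniform measure is also how the special case for $L_m$ follows). The difference is upstream: the paper simply quotes the inequality $\bigl(\sum_i w_iA_i^{-1}\bigr)^{-1}\le G_m(\bw;\bA)\le\sum_i w_iA_i$ from Lawson--Lim, whereas you reprove it. Your proof of that ingredient is sound: the induction $S_k\le T_k$ uses only the two-variable inequality $A\,\#_t\,B\le(1-t)A+tB$ together with order preservation of positive combinations; Lemma \ref{L-5.1} and the classical strong law of large numbers let you pass to the limit (operator inequalities survive norm limits since the positive cone is closed); and the harmonic bound follows either from the isometry $\delta(A^{-1},B^{-1})=\delta(A,B)$, which by uniqueness of the minimizer in \eqref{F-5.1} gives $G_m(\bw;\bA^{-1})=G_m(\bw;\bA)^{-1}$, or from the identity $(A\,\#_t\,B)^{-1}=A^{-1}\,\#_t\,B^{-1}$ applied along the inductive means. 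What each approach buys: the paper's citation keeps the proof to a few lines, while your argument makes the section self-contained and runs on precisely the same probabilistic tool (Lemma \ref{L-5.1}) that the paper already invokes for Proposition \ref{P-5.3}; it is in spirit the Lawson--Lim argument reconstructed from scratch.
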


\begin{proof}
It was proved in \cite{LL} that
$$
\Biggl(\sum_{i=1}^mw_iA_i^{-1}\Biggr)^{-1}\le
G_m(\bw;\bA)\le\sum_{i=1}^mw_iA_i.
$$
Integrating over $\Sigma_m$ by $\nu$ we have
$$
\int_{\Sigma_m}\Biggl(\sum_{i=1}^mw_iA_i^{-1}\Biggr)^{-1}\,d\nu(\bw)\le
\sigma_m(\bA)\le\int_{\Sigma_m}\sum_{i=1}^mw_iA_i\,d\nu(\bw).
$$
It is obvious that
$$
\int_{\Sigma_m}\sum_{i=1}^mw_iA_i\,d\nu(\bw)=\sum_{i=1}^m\overline{w}_iA_i.
$$
Since $x^{-1}$ ($x>0$) is operator convex,
$$
\int_{\Sigma_m}\Biggl(\sum_{i=1}^mw_iA_i^{-1}\Biggr)^{-1}\,d\nu(\bw)
\ge\Biggl(\int_{\Sigma_m}\sum_{i=1}^mw_iA_i^{-1}\,d\nu(\bw)\Biggr)^{-1}
=\Biggl(\sum_{i=1}^m\overline{w}_iA_i^{-1}\Biggr)^{-1}.
$$
\end{proof}

The above proposition says that an $m$-variable geodesic mean is between the $m$-variable
weighted harmonic and arithmetic means. The naturally expected inequality
$G_m(\bA)\le L_m(\bA)$ is not known, where $G_m(\bA)$ is the non-weighted geometric mean
$G_m(\bw;\bA)$ with $\bw=(1/m,\dots,1/m)$.

We now prove the log-majorization for the weighted geometric mean $G_m(\bw;\bA)$.

\begin{prop}\label{P-5.3}
For every $\bw\in\Sigma_m$ and every $A_1,\dots,A_m\in\bP_n$,
\begin{equation}\label{F-5.3}
G_m(\bw;\bA)\prec_{(\log)}G_m(\bw;\bA^\downarrow),
\end{equation}
where $G_m(\bw;\bA^\downarrow)$ stands for $G_m(\bw;A_1^\downarrow,\dots,A_m^\downarrow)$.
\end{prop}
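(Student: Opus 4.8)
The plan is to reduce this several-variable statement to the two-variable log-majorization of Corollary \ref{C-3.6} by means of the inductive-mean approximation in Lemma \ref{L-5.1}, and then pass to the limit. First I would dispose of the equality requirement at $k=n$: since $\det(P\,\#_\alpha\,Q)=(\det P)^{1-\alpha}(\det Q)^\alpha$, the determinant propagates through the inductive construction to give $\det G_m(\bw;\bA)=\prod_{i=1}^m(\det A_i)^{w_i}$, and as $\det A_i^\downarrow=\det A_i$ both sides of \eqref{F-5.3} have equal determinant. Thus the whole issue is the family of partial-product inequalities $\prod_{j=1}^p\mu_j(G_m(\bw;\bA))\le\prod_{j=1}^p\mu_j(G_m(\bw;\bA^\downarrow))$ for $p=1,\dots,n$, and I must arrange that genuine log-majorization (with determinant equality), not merely weak log-majorization, survives every step.

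The core is the following claim about the inductive mean, which I would prove by induction on $k$: for any positive definite $B_1,\dots,B_k\in\bP_n$,
\[
S_k(B_1,\dots,B_k)\prec_{(\log)}S_k(B_1^\downarrow,\dots,B_k^\downarrow).
\]
The base case $k=1$ is an equality of spectra. For the inductive step, write $P:=S_{k-1}(B_1,\dots,B_{k-1})$ and assume $P\prec_{(\log)}D:=S_{k-1}(B_1^\downarrow,\dots,B_{k-1}^\downarrow)$, which is a diagonal matrix with decreasing entries. Applying Corollary \ref{C-3.6} to $P\,\#_{1/k}\,B_k$ gives
\[
S_k(B_1,\dots,B_k)=P\,\#_{1/k}\,B_k\prec_{(\log)}P^\downarrow\,\#_{1/k}\,B_k^\downarrow,
\]
so it remains to compare the diagonal matrix $P^\downarrow\,\#_{1/k}\,B_k^\downarrow$ with $D\,\#_{1/k}\,B_k^\downarrow=S_k(B_1^\downarrow,\dots,B_k^\downarrow)$.

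This is handled by an elementary but crucial sub-step. For commuting diagonal matrices with decreasing nonnegative entries, $E\,\#_{1/k}\,F$ is the diagonal matrix with entries $e_j^{1-1/k}f_j^{1/k}$, which are again decreasing (a product of decreasing positive sequences raised to nonnegative powers is decreasing) and hence are the eigenvalues already in order. Therefore, for the two diagonal matrices $P^\downarrow$ and $D$, the factors coming from $B_k^\downarrow$ cancel in every partial product, and
\[
\prod_{j=1}^p\mu_j(P^\downarrow\,\#_{1/k}\,B_k^\downarrow)\le\prod_{j=1}^p\mu_j(D\,\#_{1/k}\,B_k^\downarrow)\iff\prod_{j=1}^p\mu_j(P^\downarrow)\le\prod_{j=1}^p\mu_j(D),
\]
which is exactly the induction hypothesis $P\prec_{(\log)}D$; the determinant equality is preserved since $\det P=\det D$. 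Transitivity of $\prec_{(\log)}$ then gives $S_k(B_1,\dots,B_k)\prec_{(\log)}S_k(B_1^\downarrow,\dots,B_k^\downarrow)$, completing the induction.

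Finally I would invoke Lemma \ref{L-5.1}. Fixing the i.i.d.\ sequence $X_k$ with distribution $\sum_iw_i\delta_{A_i}$, for almost every $\omega$ one has $S_k(X_1(\omega),\dots,X_k(\omega))\to G_m(\bw;\bA)$; applying the same lemma to the diagonal matrices $A_1^\downarrow,\dots,A_m^\downarrow$ (the diagonalization commutes with the selection, as $X_i^\downarrow$ depends only on which $A_j$ is chosen) gives $S_k(X_1^\downarrow(\omega),\dots,X_k^\downarrow(\omega))\to G_m(\bw;\bA^\downarrow)$ on the same full-measure set. The claim furnishes $S_k(X_1(\omega),\dots)\prec_{(\log)}S_k(X_1^\downarrow(\omega),\dots)$ for every $k$, and since eigenvalues depend continuously on the matrix, the finitely many partial-product inequalities pass to the limit, yielding \eqref{F-5.3}. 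I expect the main obstacle to be precisely the inductive step: one must feed the two-variable log-majorization (Corollary \ref{C-3.6}) and the diagonal monotonicity together through the inductive mean while carefully tracking determinant equality, so that full log-majorization rather than its weak form survives both the transitivity chain and the passage to the limit.
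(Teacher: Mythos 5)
Your proof is correct and follows essentially the same route as the paper: reduce to the inductive mean via Corollary \ref{C-3.6}, establish $S_k(B_1,\dots,B_k)\prec_{(\log)}S_k(B_1^\downarrow,\dots,B_k^\downarrow)$, then pass to the limit using Lemma \ref{L-5.1} and continuity of eigenvalues. The only difference is that you spell out explicitly (via the diagonal comparison and determinant tracking) the step the paper compresses into ``iterating this for $k=2,3,\dots$ we notice,'' which is a worthwhile clarification but not a different argument.
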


\begin{proof}
By Corollary \ref{C-3.6} we have for every $X_1,X_2,\dots\in\bP_n$ and every $k\ge2$,
$$
S_k(X_1,\dots,X_k)\prec_{(\log)}
S_{k-1}(X_1,\dots,X_{k-1})^\downarrow\,\#_{1/k}\,X_k^\downarrow.
$$
Iterating this for $k=2,3,\dots$ we notice that
$$
S_k(X_1,\dots,X_k)\prec_{(\log)}S_k(X_1^\downarrow,\dots,X_k^\downarrow)
$$
for every $k\in\bN$. Let $X_k:\Omega\to\bP_n$, $k\in\bN$, be as in Lemma \ref{L-5.1}
associated with given $\bw$ and $A_1,\dots,A_m\in\bP_n$. We then have
\begin{equation}\label{F-5.4}
S_k(X_1(\omega),\dots,X_k(\omega))\prec_{(\log)}
S_k(X_1(\omega)^\downarrow,\dots,X_k(\omega)^\downarrow)
\end{equation}
for all $\omega\in\Omega$. Note that $X_k(\omega)^\downarrow$, $k\in\bN$, is a sequence of
i.i.d.\ random variables with distribution $\sum_{i=1}^mw_i\delta_{A_i^\downarrow}$.
Lemma \ref{L-5.1} implies that both sides of \eqref{F-5.4} converge to those of
\eqref{F-5.3}, respectively, as $k\to\infty$ for almost every $\omega$. Hence \eqref{F-5.3}
holds.
\end{proof}

The next result is the $m$-variable extension of Proposition \ref{P-3.7}. The proof
based on Proposition \ref{P-5.3} is similar to that of Proposition \ref{P-3.7}.

\begin{prop}\label{prop-5-2}
Let $\sigma_m$ be an $m$-variable geodesic mean and $A_1,\dots,A_m\in\bP_n$. Then, for
every $k=1,\dots,n$,
$$
\Biggl\{\prod_{j=n+1-k}^n\mu_j(\sigma_m(\bA))\Biggr\}^{1/k}
\ge\sigma_m\Biggl(\Biggl\{\prod_{j=n+1-k}^n\mu_j(A_1)\Biggr\}^{1/k},
\dots,\Biggl\{\prod_{j=n+1-k}^n\mu_j(A_m)\Biggr\}^{1/k}\Biggr).
$$
\end{prop}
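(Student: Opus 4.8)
The plan is to transcribe the proof of Proposition~\ref{P-3.7} into the $m$-variable setting, replacing its two-variable input (Corollary~\ref{C-3.6}) by its genuine $m$-variable counterpart, Proposition~\ref{P-5.3}. Write $\Delta_k(A):=\{\prod_{j=n+1-k}^n\mu_j(A)\}^{1/k}$, which by Example~\ref{E-4.5} is a symmetric anti-norm on $\bM_n^+$, hence superadditive and positively homogeneous, and therefore concave. Since by definition
$$
\sigma_m(\bA)=\int_{\Sigma_m}G_m(\bw;\bA)\,d\nu(\bw),
$$
the concavity of $\Delta_k$ in its integral (Jensen) form yields at once
$$
\Delta_k(\sigma_m(\bA))\ge\int_{\Sigma_m}\Delta_k(G_m(\bw;\bA))\,d\nu(\bw).
$$
For a finitely supported $\nu$ this is just superadditivity together with homogeneity, and for a general probability measure $\nu$ it follows by approximation through finitely supported measures exactly as in the proof of Theorem~\ref{T-4.7}.

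The heart of the matter is the pointwise estimate, for each fixed $\bw\in\Sigma_m$,
$$
\Delta_k(G_m(\bw;\bA))\ge\prod_{i=1}^m\Delta_k(A_i)^{w_i}.
$$
First I would feed the log-majorization $G_m(\bw;\bA)\prec_{(\log)}G_m(\bw;\bA^\downarrow)$ of Proposition~\ref{P-5.3} into the observation that log-majorization \emph{reverses} $\Delta_k$: since the determinant is preserved while the partial products of the largest eigenvalues only increase, the product of the $k$ smallest eigenvalues can only decrease, so $\Delta_k(G_m(\bw;\bA))\ge\Delta_k(G_m(\bw;\bA^\downarrow))$ (this is the $\Delta_k$-specialization of Lemma~\ref{L-4.10}, via the limit formula \eqref{F-4.1}). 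It then remains to compute the right-hand side. As the $A_i^\downarrow$ are commuting diagonal matrices, $G_m(\bw;\bA^\downarrow)$ is the diagonal matrix whose $j$-th entry is the scalar weighted geometric mean $\prod_{i=1}^m\mu_j(A_i)^{w_i}$; this is the commutative specialization of $G_m$, read off the inductive-mean description of Lemma~\ref{L-5.1}, where each $\#_{1/k}$ acts entrywise. Because each $j\mapsto\mu_j(A_i)$ is non-increasing, so is $j\mapsto\prod_i\mu_j(A_i)^{w_i}$, whence its $k$ smallest values are the last $k$ and
$$
\Delta_k(G_m(\bw;\bA^\downarrow))
=\Biggl\{\prod_{j=n+1-k}^n\,\prod_{i=1}^m\mu_j(A_i)^{w_i}\Biggr\}^{1/k}
=\prod_{i=1}^m\Biggl\{\prod_{j=n+1-k}^n\mu_j(A_i)\Biggr\}^{w_i/k}
=\prod_{i=1}^m\Delta_k(A_i)^{w_i},
$$
with equality.

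Finally I would integrate this pointwise bound against $\nu$ and recognize the outcome as a geodesic mean of scalars: for positive reals $a_1,\dots,a_m$ one has $G_m(\bw;a_1,\dots,a_m)=\prod_i a_i^{w_i}$, so
$$
\int_{\Sigma_m}\prod_{i=1}^m\Delta_k(A_i)^{w_i}\,d\nu(\bw)
=\sigma_m(\Delta_k(A_1),\dots,\Delta_k(A_m)),
$$
and chaining the three displayed inequalities gives the assertion. The only genuinely delicate point is this middle step: correctly pinning down the eigenvalues of $G_m(\bw;\bA^\downarrow)$ and checking that the log-majorization of Proposition~\ref{P-5.3} points in the direction that \emph{increases} $\Delta_k$ (the supermajorization, anti-norm side), rather than the direction relevant to ordinary symmetric norms. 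Everything else is bookkeeping parallel to Proposition~\ref{P-3.7}.
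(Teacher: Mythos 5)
Your proof is correct and is exactly the argument the paper intends: the paper only remarks that the proof ``based on Proposition \ref{P-5.3} is similar to that of Proposition \ref{P-3.7},'' and your writeup carries out precisely that scheme (concavity of the anti-norm $\Delta_k$ against the integral representation, the log-majorization of Proposition \ref{P-5.3} read in the supermajorization/anti-norm direction, and the commutative evaluation $G_m(\bw;\bA^\downarrow)=\diag\bigl(\prod_i\mu_j(A_i)^{w_i}\bigr)_j$). Your justification of that last evaluation via the inductive means and Lemma \ref{L-5.1} is a valid way to fill in the one detail the paper leaves implicit.
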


\vskip 10pt
Furthermore, in the next theorem we similarly have the $m$-variable versions of Theorem
\ref{T-4.7} and Proposition \ref{P-4.13}. The proof  is similar to that in
Section 4.

\vskip 10pt
\begin{theorem}\label{T-5.5}
Let $\sigma_m$ be an $m$-variable geodesic mean and let $A_1,\dots,A_m\in\bM_n\{\Omega\}$
for an interval $\Omega\subset(0,\infty)$.
\begin{itemize}
\item[\rm{1}.]
If $f(t)$ is a doubly concave function on $\Omega$, then
\begin{equation*}
\| f(\sigma_m(\bA)) \|_! \ge \sigma_m( \|f(\bA)\|_!)
\end{equation*}
for all derived anti-norms $\|\cdot\|_!$, where $\sigma_m(\|f(\bA)\|_!):=
\sigma_m(\|f(A_1)\|_1,\dots,\allowbreak\|f(A_m)\|_!)$.
\item[\rm{2}.]
If $g(t)$ is a doubly convex function on $\Omega$, then
\begin{equation*}
\| g(\sigma_m(\bA)) \| \le \sigma_m( \|g(\bA)\|)
\end{equation*}
for all symmetric norms $\|\cdot\|$, where $\sigma_m(\|g(\bA)\|)$ is as above.
\end{itemize}
\end{theorem}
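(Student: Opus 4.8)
The plan is to run the two-variable arguments of Theorem \ref{T-4.7} (for part 1) and Proposition \ref{P-4.13} (for part 2), replacing the two-variable log-majorization of Corollary \ref{C-3.6} by its $m$-variable counterpart, Proposition \ref{P-5.3}. I treat part 1; part 2 is entirely parallel with all inequalities reversed. Writing $\sigma_m(\bA)=\int_{\Sigma_m}G_m(\bw;\bA)\,d\nu(\bw)$ and viewing $\bw\mapsto G_m(\bw;\bA)$ as a family in $\bM_n\{\Omega\}$, I first apply the concave version of Theorem \ref{T-1.1} to the averaging against $\nu$ to obtain, for some unitaries $U,V$,
\[
f(\sigma_m(\bA))\ge\frac12\Bigl\{U\Bigl(\int_{\Sigma_m}f(G_m(\bw;\bA))\,d\nu(\bw)\Bigr)U^*
+V\Bigl(\int_{\Sigma_m}f(G_m(\bw;\bA))\,d\nu(\bw)\Bigr)V^*\Bigr\}.
\]
When $\nu$ is finitely supported this is immediate from Theorem \ref{T-1.1} with $Z$ the block-diagonal matrix $\bigoplus_lG_m(\bw_l;\bA)$ and $\bE$ the corresponding weighted compression onto $\bM_n$; for a general $\nu$ one approximates by finitely supported measures and passes both integrals to the limit, using the continuity of $\bw\mapsto G_m(\bw;\bA)$ on $\Sigma_m$ (see \cite{LL}) together with the continuity of $f$. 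Since a derived anti-norm is monotone and concave (superadditive) as well as unitarily invariant, this yields
\[
\|f(\sigma_m(\bA))\|_!\ge\int_{\Sigma_m}\|f(G_m(\bw;\bA))\|_!\,d\nu(\bw).
\]

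The heart of the proof is the pointwise-in-$\bw$ estimate $\|f(G_m(\bw;\bA))\|_!\ge G_m(\bw;\|f(\bA)\|_!)$, where $G_m(\bw;\cdot)$ on scalars is the weighted product $\prod_ia_i^{w_i}$. I would establish it by chaining three facts. By Proposition \ref{P-5.3}, $G_m(\bw;\bA)\prec_{(\log)}G_m(\bw;\bA^\downarrow)$; since $f$ is geometrically concave, $s\mapsto\log f(e^s)$ is concave, and applying a concave map to the ordinary majorization of the log-eigenvalue vectors converts log-majorization into log-supermajorization of the $f$-images, so $f(G_m(\bw;\bA))\prec^{w(\log)}f(G_m(\bw;\bA^\downarrow))$. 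Next, the matrices $A_i^\downarrow$ are diagonal, hence commute, so $G_m(\bw;\bA^\downarrow)=\prod_i(A_i^\downarrow)^{w_i}=\diag\bigl(\prod_i\mu_j(A_i)^{w_i}\bigr)_j$, and the weighted form of geometric concavity, $f\bigl(\prod_ix_i^{w_i}\bigr)\ge\prod_if(x_i)^{w_i}$, gives the diagonal inequality $f(G_m(\bw;\bA^\downarrow))\ge D:=\diag\bigl(\prod_if(\mu_j(A_i))^{w_i}\bigr)_j$, whence $f(G_m(\bw;\bA^\downarrow))\prec^{w(\log)}D$ by Weyl monotonicity. Transitivity of $\prec^{w(\log)}$ and Lemma \ref{L-4.10} then give $\|f(G_m(\bw;\bA))\|_!\ge\|D\|_!$. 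Finally $D=\prod_i(f(A_i^\downarrow))^{w_i}$ is the weighted geometric mean of commuting matrices, so the generalized H\"older inequality $\Phi\bigl(\prod_i(c^{(i)})^{w_i}\bigr)\le\prod_i\Phi(c^{(i)})^{w_i}$ for the gauge function $\Phi$ underlying $\|\cdot\|_!$, applied to $c^{(i)}=(f(\mu_j(A_i))^{-p})_j$, yields the $m$-variable analogue of Lemma \ref{L-4.11}, namely $\|D\|_!\ge\prod_i\|f(A_i^\downarrow)\|_!^{w_i}=G_m(\bw;\|f(\bA)\|_!)$, where $\|f(A_i^\downarrow)\|_!=\|f(A_i)\|_!$ by unitary invariance.

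Inserting this pointwise estimate into the integral inequality and using $\int_{\Sigma_m}G_m(\bw;\|f(\bA)\|_!)\,d\nu(\bw)=\sigma_m(\|f(\bA)\|_!)$ finishes part 1. For part 2 I would use the convex version of Theorem \ref{T-1.1} and the subadditivity of the symmetric norm to get $\|g(\sigma_m(\bA))\|\le\int_{\Sigma_m}\|g(G_m(\bw;\bA))\|\,d\nu(\bw)$, then replace each use of geometric concavity by geometric convexity of $g$, so that log-majorization now becomes log-submajorization $\prec_{w(\log)}$ of the $g$-images and the diagonal inequality reverses, and invoke the symmetric-norm versions quoted before Proposition \ref{P-4.13}. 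The step I expect to require the most care is the geometric-concavity/convexity passage from log-majorization to log-super/sub-majorization of the $f$- (resp.\ $g$-) images: one must track the correct direction of majorization and the fact that $f$ (resp.\ $g$) need not be monotone, so the resulting eigenvalue reorderings must be handled through the (anti-)gauge functional rather than entrywise. The approximation of a general $\nu$ on $\Sigma_m$ by finitely supported measures is the other point needing justification, but it is routine given continuity of the least squares mean in the weights.
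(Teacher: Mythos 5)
Your proposal is correct and follows exactly the route the paper intends: the paper's own ``proof'' of Theorem \ref{T-5.5} is just the remark that one repeats the Section~4 arguments (Theorem \ref{T-4.7} and Proposition \ref{P-4.13}) with Proposition \ref{P-5.3} replacing Corollary \ref{C-3.6}, which is precisely what you do. You have in fact filled in the details the paper leaves implicit --- notably the evaluation $G_m(\bw;\bA^\downarrow)=\prod_i(A_i^\downarrow)^{w_i}$ for commuting matrices, the weighted form of geometric concavity, and the $m$-variable analogue of Lemma \ref{L-4.11} via the generalized H\"older inequality for (anti-)gauge functions --- all of which are handled correctly.
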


\vskip 10pt
A particular case of the second assertion of the theorem with $g(t)=t$ (or rather a
consequence of Proposition \ref{P-5.3}) is a very recent inequality for the weighted
geometric mean due to Bhatia and Karandikar \cite{BK}:
$$
\| G_m(\bw;\bA)\| \le \prod_{i=1}^m \| A_i\|^{w_i}.
$$
For derived anti-norms, the reverse inequality holds.

\section{Miscellaneous results on anti-norms}

This section gives some additional results on anti-norms while we have not used them in
the main body of the paper. The first proposition is concerned with duality of anti-norms.

\begin{prop}
Let $\|\cdot\|_!$ be a symmetric anti-norm on $\bM_n^+$ assumed not identically zero, and
define for every $A\in\bM_n^+$,
$$
\|A\|_!':=\inf\{\Tr AB: B\in\bM_n^+,\,\|B\|_!=1\}.
$$
Then $\|\cdot\|_!'$ is a symmetric anti-norm on $\bM_n^+$ too.
\end{prop}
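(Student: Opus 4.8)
The plan is to verify, one by one, the defining properties of a symmetric anti-norm listed in Definition \ref{D-4.1}, the only genuinely delicate one being continuity. First I would note that the set $K:=\{B\in\bM_n^+:\|B\|_!=1\}$ is nonempty: since $\|\cdot\|_!$ is positively homogeneous and not identically zero, any $B_1$ with $\|B_1\|_!>0$ gives $B_1/\|B_1\|_!\in K$. Because $\Tr AB\ge0$ whenever $A,B\in\bM_n^+$, the quantity $\|A\|_!'$ is an infimum of nonnegative numbers, and it is finite since $\|A\|_!'\le\Tr AB_0<\infty$ for any fixed $B_0\in K$; thus $\|\cdot\|_!'$ is a well-defined nonnegative functional. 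Positive homogeneity $\|\lambda A\|_!'=\lambda\|A\|_!'$ is immediate from the linearity of the trace in $A$. For unitary invariance, the map $B\mapsto U^*BU$ is a bijection of $K$ onto itself by property 2 of $\|\cdot\|_!$, and $\Tr((UAU^*)B)=\Tr(A(U^*BU))$, so taking the infimum over $B\in K$ gives $\|UAU^*\|_!'=\|A\|_!'$.

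Superadditivity is the standard fact that an infimum of a sum dominates the sum of the infima: for $A,A'\in\bM_n^+$,
\[
\|A+A'\|_!'=\inf_{B\in K}\bigl(\Tr AB+\Tr A'B\bigr)\ge\inf_{B\in K}\Tr AB+\inf_{B\in K}\Tr A'B=\|A\|_!'+\|A'\|_!'.
\]

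The main obstacle is continuity on all of $\bM_n^+$, including its noninvertible boundary. Being an infimum of the continuous linear functionals $A\mapsto\Tr AB$, the functional $\|\cdot\|_!'$ is concave and upper semicontinuous on $\bM_n^+$; concavity already yields continuity on the open cone of invertible elements, so the issue is lower semicontinuity at boundary points, where the constraint set $K$ is typically noncompact and near-minimizers may run off to infinity along the kernel of $A$. I would circumvent this by passing to eigenvalues. By the unitary invariance just proved, $\|A\|_!'=\psi(\mu(A))$ for the symmetric function $\psi(a):=\|\diag(a)\|_!'$ on $\bR^n_+$. Testing concavity of $\|\cdot\|_!'$ on diagonal matrices in a common basis shows $\psi$ is midpoint concave, hence concave; likewise $\psi$ is upper semicontinuous and positively homogeneous, and it is finite by the bound above. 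Then $-\psi$ is a finite closed convex function on $\bR^n_+$, and by the Gale--Klee--Rockafellar theorem it is continuous relative to the probability simplex, a polytope contained in its domain; homogeneity propagates this continuity to all of $\bR^n_+$. Since the eigenvalue map $A\mapsto\mu(A)$ is continuous, $\|A\|_!'=\psi(\mu(A))$ is continuous on $\bM_n^+$. This completes the verification that $\|\cdot\|_!'$ satisfies all the requirements of Definition \ref{D-4.1}. I expect the reduction to the simplex, where finiteness of a concave function already forces continuity, to be the crux, precisely because the matrix-level infimum offers no compactness at the boundary.
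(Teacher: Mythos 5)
Your proof is correct, and it reaches the one delicate point---lower semicontinuity of $\|\cdot\|_!'$ at the boundary of $\bM_n^+$---by a genuinely different route than the paper. The two arguments agree on the easy parts: properties 1--3 are immediate, upper semicontinuity comes from the infimum-of-continuous-linear-functionals representation, and both reduce the boundary issue to vectors of eigenvalues via unitary invariance. They diverge at the crux. The paper identifies $\|X\|_!'$ with $\Phi_!'(\mu(X))$, the vector-level dual of the anti-gauge function $\Phi_!$ of the \emph{original} anti-norm (an identification that uses the rearrangement inequality $\Tr AB\ge\Tr A^\downarrow B^\uparrow$), and then proves lower semicontinuity of $\Phi_!'$ by hand: near-minimizers $b^{(l)}$, a subsequence converging in the compactification $[0,\infty]^n$, a monotone-continuous extension of $\Phi_!$ to $[0,\infty]^n$, and a truncation argument to handle infinite coordinates of the limit. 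You instead restrict the dual functional itself to diagonal matrices, so that $\|A\|_!'=\psi(\mu(A))$ needs nothing beyond the unitary invariance you already proved, and you obtain lower semicontinuity structurally: $-\psi$ is convex and finite on the probability simplex, a polytope and hence locally simplicial, so the Gale--Klee--Rockafellar theorem makes it upper semicontinuous there; combined with the upper semicontinuity of $\psi$ coming from the infimum representation, $\psi$ is continuous on the simplex, and homogeneity (plus boundedness of $\psi$ on the simplex, which settles continuity at $0$) spreads this over $\bR_+^n$. The paper's route is elementary and self-contained at the cost of the $[0,\infty]^n$ bookkeeping; yours is shorter and isolates the conceptual reason lower semicontinuity cannot fail---a finite concave function on a polytope is automatically lower semicontinuous---at the cost of citing a nontrivial (though classical) theorem of convex analysis. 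Two cosmetic points: the appeal to midpoint concavity is superfluous, since the restriction of the concave functional $\|\cdot\|_!'$ to the diagonal matrices is concave outright; and it is worth stating explicitly that Gale--Klee--Rockafellar supplies only the missing semicontinuity (upper semicontinuity of $-\psi$), the other half coming from your infimum representation, since the phrase ``by the Gale--Klee--Rockafellar theorem it is continuous'' slightly overstates what that theorem alone delivers.
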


\begin{proof}
The properties 1--3 of Definition \ref{D-4.1} for $\|\cdot\|_!'$ are immediate from
definition. To prove continuity, let $\{A_l\}$ be a sequence in $\bM_n^+$ converging to
$A\in\bM_n^+$. For any $B\in\bM_n^+$ with $\|B\|_!=1$, since
$\|A_l\|_!'\le\Tr A_lB\to\Tr AB$, we have $\limsup_{l\to\infty}\|A_l\|_!'\le\Tr AB$ so that
$\limsup_{l\to\infty}\|A_l\|_!'\le\|A\|_!'$. To show that
$\|A\|_!'\le\liminf_{l\to\infty}\|A_l\|_!'$, let $\Phi_!$ be the symmetric anti-gauge
function corresponding to $\|\cdot\|_!$, see \cite[Proposition 3.2]{BH1}. From the fact
that $\Tr AB\ge\Tr A^\downarrow B^\uparrow$ for $A,B\in\bM_n^+$, it is easy to see that
$\|X\|_!'=\Phi_!'(\mu(X))$ for all $X\in\bM_n^+$, where
$$
\Phi_!'(x):=\inf\Biggl\{\sum_{i=1}^nx_iy_i:y\in\bR_+^n,\,\Phi_!(y)\ge1\Biggr\},
\qquad x\in\bR_+^n.
$$
Since $\mu(A_l)\to\mu(A)$, we need to show that
$\Phi_!'(a)\le\liminf_{l\to\infty}\Phi_!'(a^{(l)})$ if $a^{(l)}\to a$ in $\bR_+^n$. For
each $l$ choose a $b^{(l)}\in\bR_+^n$ such that $\Phi_!(b^{(l)})\ge1$ and
\begin{equation}\label{F-6.1}
\sum_{i=1}^na_i^{(l)}b_i^{(l)}<\Phi_!'(a^{(l)})+l^{-1}.
\end{equation}
By taking a subsequence we may assume that $b^{(l)}\to b\in[0,\infty]^n$. Moreover, since
$\Phi_!$ is continuous and monotone (i.e., $\Phi_!(x)\le\Phi_!(y)$ if $x\le y$ in
$\bR_+^n$), one can extend $\Phi_!$ to a continuous functional on $[0,\infty]^n$ with
values in $[0,\infty]$. By \eqref{F-6.1} it follows that
$$
\sum_{i=1}^na_ib_i\le\liminf_{l\to\infty}\sum_{i=1}^na_i^{(l)}b_i^{(l)}
\le\liminf_{l\to\infty}\Phi_!'(a^{(l)}),
$$
with convention $a_ib_i:=0$ for $a_i=0$ and $b_i=\infty$. Since $\Phi_!(b)\in[1,\infty]$
by continuity, one can choose $\tilde b^{(m)}\in\bR_+^n$, $m\in\bN$, such that
$\tilde b^{(m)}\le b$ and $0<\beta_m:=\Phi_!(\tilde b^{(m)})\to\Phi_!(b)$. Therefore,
$$
\Phi_!'(a)\le\sum_{i=1}^na_i(\beta_m^{-1}b_i^{(m)})
\le\beta_m^{-1}\sum_{i=1}^na_ib_i.
$$
Letting $m\to\infty$ yields that $\Phi_!'(a)\le\liminf_{l\to\infty}\Phi_!'(a^{(l)})$.
\end{proof}

We call the above $\|\cdot\|_!'$ the {\it dual anti-norm} of $\|\cdot\|_!$. It is plain to
show that the dual anti-norm of $\|\cdot\|_!'$ goes back to $\|\cdot\|_!$ like symmetric
norms.

\begin{example}
When $p\in(0,1)$ and $q\in(-\infty,0)$ with $1/p+1/q=1$, the reverse H\"older inequality
shows that
$$
\|a\|_q=\inf\Biggl\{\sum_{i=1}^na_ib_i:b\in\bR_+^n,\,\|b\|_p=1\Biggr\}
$$
for every $a\in\bR_+^n$, where $\|a\|_q$ and $\|b\|_p$ are defined for vectors in $\bR_+^n$
as in Example \ref{E-4.3}. This implies that the Schatten anti-norm $\|\cdot\|_p$ and the
negative Schatten anti-norm $\|\cdot\|_q$ (Example \ref{E-4.3}) on $\bM_n^+$ are the dual
of each other. Letting $p\searrow0$ (and $q\nearrow0$) we observe that
$A\mapsto\det^{1/n}A$ is dual to itself up to a constant; in fact, this is verified from
$$
\inf\Biggl\{\sum_{i=1}^na_ib_i:b\in\bR_+^n,\,(b_1\cdots b_n)^{1/n}\ge1\Biggr\}
=n(a_1\cdots a_n)^{1/n},\qquad a\in\bR_+^n.
$$
Thus, the Minkowski functional is special as the self-dual symmetric anti-norm, likewise
the Hilbert-Schmidt norm is a special symmetric norm. More generally, it is worthwhile to
note that the correspondence $\|\cdot\|_p\mapsto\|\cdot\|_{2-p}$, $p\in[1,\infty]$,
transforms the Schatten norms to the Schatten anti-norms and preserves the duality paring.
Here, $\|\cdot\|_{-\infty}$ means $\|\cdot\|_{\{1\}}$, i.e., the functional taking the
smallest eigenvalue $\lambda_n(A)$, which is the dual anti-norm of the trace on $\bM_n^+$.
\end{example}

\vskip 5pt
In the next proposition we give two expressions for the Ky Fan $k$-anti-norms.

\begin{prop}\label{P-6.3}
For every $Z\in\bM_n^+$ and every $k=1,\dots,n$,
\begin{align}
\|Z\|_{\{k\}}
&=\min\{\Tr ZP: P\ \mbox{is a projection},\ \mathrm{rank}\,P=k\} \label{F-6.2}\\
&=\max\{k\lambda_n(A)-\Tr B:A,B\in\bM_n^+,\,Z=A-B\}. \label{F-6.3}
\end{align}
\end{prop}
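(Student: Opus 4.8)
The plan is to prove the two equalities in turn, treating \eqref{F-6.2} as a Ky Fan type variational formula and then deducing \eqref{F-6.3} from it by a duality (min-max) argument together with one explicit optimal decomposition.

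For \eqref{F-6.2} I would diagonalize $Z=\sum_{i=1}^n\mu_i(Z)\,e_ie_i^*$ in an orthonormal eigenbasis $(e_i)$ ordered so that $\mu_1(Z)\ge\cdots\ge\mu_n(Z)$. For any rank-$k$ projection $P$ one has $\Tr ZP=\sum_{i=1}^n\mu_i(Z)\,d_i$, where $d_i:=\langle e_i,Pe_i\rangle\in[0,1]$ and $\sum_i d_i=\Tr P=k$. Minimizing the linear functional $\sum_i\mu_i(Z)\,d_i$ over the polytope $\{0\le d_i\le1,\ \sum_i d_i=k\}$ is attained at the vertex placing weight $1$ on the $k$ smallest eigenvalues, which gives the value $\sum_{j=1}^k\mu_{n+1-j}(Z)=\|Z\|_{\{k\}}$; this value is realized by the spectral projection onto $e_{n-k+1},\dots,e_n$, so the minimum equals $\|Z\|_{\{k\}}$.

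For \eqref{F-6.3} I would prove both inequalities. For the bound ``$\ge$'', take any $A,B\in\bM_n^+$ with $Z=A-B$ and any rank-$k$ projection $P$. From $A\ge\lambda_n(A)I$ we get $\Tr AP\ge\lambda_n(A)\Tr P=k\lambda_n(A)$, and from $B\ge0$ together with $0\le P\le I$ we get $\Tr BP\le\Tr B$; hence $\Tr ZP=\Tr AP-\Tr BP\ge k\lambda_n(A)-\Tr B$. Minimizing the left-hand side over rank-$k$ projections and invoking \eqref{F-6.2} yields $\|Z\|_{\{k\}}\ge k\lambda_n(A)-\Tr B$, so $\|Z\|_{\{k\}}$ dominates the maximum. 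For attainment (``$\le$'') I would exhibit an explicit decomposition: in the eigenbasis of $Z$ put $B:=\sum_{j=n-k+2}^n\bigl(\mu_{n-k+1}(Z)-\mu_j(Z)\bigr)\,e_je_j^*\ge0$ and $A:=Z+B\ge0$. Then the eigenvalues of $A$ are $\mu_1(Z),\dots,\mu_{n-k+1}(Z)$ with $\mu_{n-k+1}(Z)$ repeated $k$ times in all, so $\lambda_n(A)=\mu_{n-k+1}(Z)$, and a short computation gives $k\lambda_n(A)-\Tr B=\mu_{n-k+1}(Z)+\sum_{j=n-k+2}^n\mu_j(Z)=\|Z\|_{\{k\}}$. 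This shows the maximum is attained and equals $\|Z\|_{\{k\}}$.

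The trace inequalities $\Tr AP\ge k\lambda_n(A)$ and $\Tr BP\le\Tr B$ are routine (each reduces to positivity of the trace of a product of two positive semidefinite matrices). The one point requiring care is the attainment step: observing that maximizing $k\lambda_n(A)-\Tr B=k\lambda_n(A)-\Tr A+\Tr Z$ over $A\ge Z$ reduces (by simultaneous diagonalization) to a scalar optimization in which the correct threshold is $\mu_{n-k+1}(Z)$, the largest among the $k$ smallest eigenvalues. Lifting exactly the $k-1$ eigenvalues strictly below this threshold up to the common value keeps $A-Z\ge0$ while pinning $\lambda_n(A)$ at the threshold. I expect identifying and justifying this threshold to be the main conceptual obstacle; the rest is bookkeeping.
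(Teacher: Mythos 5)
Your proof is correct, and it differs from the paper's in a useful way on one step. The attainment construction is the same: your $B=\sum_{j=n-k+2}^n\bigl(\mu_{n-k+1}(Z)-\mu_j(Z)\bigr)e_je_j^*$ and $A=Z+B$ coincide with the paper's decomposition (the paper writes the same matrices, with an extra zero term indexed $j=n+1-k$ in $B$), and your verification that $\lambda_n(A)=\mu_{n-k+1}(Z)$ and $k\lambda_n(A)-\Tr B=\|Z\|_{\{k\}}$ matches. Where you genuinely diverge is the bound ``$\ge$'' in \eqref{F-6.3}: the paper deduces it from the eigenvalue inequality $\sum_{j=n+1-k}^n\lambda_j(A-B)\ge\sum_{j=n+1-k}^n\lambda_j(A)-\sum_{j=1}^k\lambda_j(B)$, a Lidskii/Ky Fan-type majorization fact that it leaves unjustified, whereas you derive the bound directly from \eqref{F-6.2} via the two elementary inequalities $\Tr AP\ge k\lambda_n(A)$ and $\Tr BP\le\Tr B$, each reducing to the positivity of the trace of a product of positive semidefinite matrices. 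Your route is more self-contained and makes explicit that \eqref{F-6.3} is a consequence of \eqref{F-6.2}; the paper's is shorter if one takes the majorization inequality as known. You also supply a full proof of \eqref{F-6.2} (the diagonal-polytope argument), which the paper omits as well-known; your argument is complete as stated, since you only need that the diagonal of a rank-$k$ projection lies in the polytope $\{0\le d_i\le1,\ \sum_id_i=k\}$ together with attainment by the spectral projection, and not the converse direction (Horn's theorem).
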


\begin{proof}
The proof of \eqref{F-6.2} is well-known and parallel to that of the similar expression
for the Ky Fan norms. To prove \eqref{F-6.3}, notice that for any $A,B\in\bM_n^+$ with
$Z=A-B$ we have
\begin{align*}
\|Z\|_{\{k\}}&=\sum_{j=n+1-k}^n\lambda_j(A-B) \\
&\ge\sum_{j=n+1-k}^n\lambda_j(A)-\sum_{j=1}^k\lambda_j(B)
\ge k\lambda_n(A)-\Tr B.
\end{align*}
Let $Z=\sum_{j=1}^n\lambda_j(Z)P_j$ be the spectral decomposition with orthogonal
projections $P_j$ of rank $1$. Set
\begin{align*}
A&:=\sum_{j=1}^{n-k}\lambda_j(Z)P_j+\lambda_{n+1-k}(Z)\sum_{j=n+1-k}^nP_j, \\
B&:=\sum_{j=n+1-k}^n\{\lambda_{n+1-k}(Z)-\lambda_j(Z)\}P_j.
\end{align*}
We then have $Z=A-B$ and
$$
\lambda_n(A)=\lambda_{n+1-k}(Z),\qquad
\Tr B=k\lambda_{n+1-k}(Z)-\sum_{j=n+1-k}^n\lambda_j(Z)
$$
so that $k\lambda_n(A)-\Tr B=\|Z\|_{\{k\}}$.
\end{proof}

\vskip 5pt
The expression \eqref{F-6.3} is considered as a kind of K-functional in the real
interpolation theory. In fact, thanks to \cite[Lemma 4.2]{BH1} that reduces the proof to
the Ky Fan $k$-anti-norms, \eqref{F-6.3} gives the anti-norm counterpart to a familiar
interpolation property of symmetric norms though the assumptions (unitality and
trace-preservation) on $\bE$ seem too strict.
Note that this can alternatively be proved as follows: If $\bE$ and $Z$ are
as in the next corollary, then we have $\bE(Z)\prec Z$, which implies that
$\bE(Z)\prec^{w}Z$  and hence
$\|E(Z)\|_!\ge\|Z\|_!$ by \cite[Lemma 4.2]{BH1}.

\vskip 5pt
\begin{cor}\label{C-6.4}
Let $\bE:\bM_n\to\bM_n$ be a positive linear map and assume that $\bE$ is unital and
trace-preserving. Then $\|\bE(Z)\|_!\ge\|Z\|_!$ holds for all $Z\in\bM_n^+$ and all
symmetric anti-norms.
\end{cor}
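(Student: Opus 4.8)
The plan is to reduce the general assertion to the Ky Fan $k$-anti-norms and then exploit the K-functional formula \eqref{F-6.3} just established. By \cite[Lemma 4.2]{BH1}, the inequality $\|\bE(Z)\|_!\ge\|Z\|_!$ for all symmetric anti-norms follows once we know the supermajorization $\bE(Z)\prec^wZ$; equivalently, it suffices to prove
$$
\|\bE(Z)\|_{\{k\}}\ge\|Z\|_{\{k\}},\qquad k=1,\dots,n,
$$
for the Ky Fan $k$-anti-norms of Example \ref{E-4.1}, since these inequalities for all $k$ are literally the statement $\bE(Z)\prec^wZ$. Thus the whole problem collapses to these finitely many scalar inequalities.

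To handle each $k$, I would feed the optimal decomposition furnished by \eqref{F-6.3} through $\bE$. Fix $k$ and choose $A,B\in\bM_n^+$ with $Z=A-B$ attaining the maximum in \eqref{F-6.3}, so that $\|Z\|_{\{k\}}=k\lambda_n(A)-\Tr B$. Since $\bE$ is positive, $\bE(A),\bE(B)\in\bM_n^+$ and $\bE(Z)=\bE(A)-\bE(B)$ is again an admissible decomposition; hence \eqref{F-6.3} applied to $\bE(Z)$ gives
$$
\|\bE(Z)\|_{\{k\}}\ge k\lambda_n(\bE(A))-\Tr\bE(B).
$$
The two structural hypotheses on $\bE$ now enter exactly once each: trace-preservation yields $\Tr\bE(B)=\Tr B$, while unitality together with positivity yields $\lambda_n(\bE(A))\ge\lambda_n(A)$ --- indeed $A\ge\lambda_n(A)I$ forces $\bE(A)\ge\lambda_n(A)\bE(I)=\lambda_n(A)I$. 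Combining these, $\|\bE(Z)\|_{\{k\}}\ge k\lambda_n(A)-\Tr B=\|Z\|_{\{k\}}$, as required.

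There is no serious obstacle once \eqref{F-6.3} is in hand: the argument is a clean K-functional estimate in which the two hypotheses on $\bE$ are precisely what is needed to control the two terms $k\lambda_n(A)$ and $\Tr B$. The only point demanding care is the reduction to Ky Fan anti-norms, which rests on \cite[Lemma 4.2]{BH1}. I would also note in passing that $\bE(Z)\prec^wZ$ together with the trace identity $\Tr\bE(Z)=\Tr Z$ upgrades to the full majorization $\bE(Z)\prec Z$, which recovers the alternative route indicated immediately before the corollary.
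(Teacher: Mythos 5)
Your proposal is correct and follows the paper's intended route: the reduction to the Ky Fan $k$-anti-norms via \cite[Lemma 4.2]{BH1} and the use of the K-functional expression \eqref{F-6.3} is precisely what the paper sketches just before the corollary, and your argument (push the optimal decomposition $Z=A-B$ through $\bE$, using trace-preservation for $\Tr B$ and unitality for $\lambda_n(A)$) supplies exactly the details the paper leaves implicit. Your closing remark also recovers the paper's stated alternative proof via $\bE(Z)\prec Z$, so nothing is missing.
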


\vskip 3pt
For instance, when $A\in\bM_n^+$ has diagonal entries all equal to $1$, the
Schur multiplication map $\bE(Z):=A\circ Z$ satisfies the assumption of the above corollary.
In fact, the result can be improved in this situation as follows:

\vskip 6pt
\begin{theorem}\label{T-6.5}
If $A\in\bM_n^+$ has   diagonal entries  all greater than or equal to $1$, then 
$$\|A\circ Z\|_!\ge\|Z\|_!$$
 holds for all $Z\in\bM_n^+$ and all symmetric anti-norms.
\end{theorem}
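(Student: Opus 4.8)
The plan is to reduce to the equal-diagonal case already settled by Corollary \ref{C-6.4} by means of a diagonal congruence, and then to show that this congruence cannot decrease a symmetric anti-norm. Write $D:=\diag(a_{11},\dots,a_{nn})$ for the diagonal part of $A$; by hypothesis $a_{ii}\ge1$, so $D\ge I$ and $D$ is invertible. Set $\tilde A:=D^{-1/2}AD^{-1/2}$, which is again positive (a congruence of $A\ge0$) and has all diagonal entries equal to $1$. The key observation is the entrywise identity
\begin{equation*}
A\circ Z=D^{1/2}(\tilde A\circ Z)D^{1/2},\qquad Z\in\bM_n^+,
\end{equation*}
valid since $(D^{1/2}(\tilde A\circ Z)D^{1/2})_{ij}=a_{ii}^{1/2}\,(a_{ii}^{-1/2}a_{ij}a_{jj}^{-1/2}z_{ij})\,a_{jj}^{1/2}=a_{ij}z_{ij}$. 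Because $\tilde A$ has unit diagonal, the Schur map $Z\mapsto\tilde A\circ Z$ is positive, unital and trace-preserving, so Corollary \ref{C-6.4} yields $\|\tilde A\circ Z\|_!\ge\|Z\|_!$ for every symmetric anti-norm.

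It then remains to handle the congruence step $\|S W S\|_!\ge\|W\|_!$, where $S:=D^{1/2}\ge I$ and $W:=\tilde A\circ Z\in\bM_n^+$. The delicate point, which I expect to be the main obstacle, is that one must \emph{not} argue in the operator order: the naive inequality $S W S\ge W$ is false in general (it already fails for rank-one $W$). Instead I would pass to eigenvalues. By the cyclic invariance of the spectrum of a product, $S W S$, $S^2 W$ and $W^{1/2}S^2W^{1/2}$ all share the same eigenvalues, so
\begin{equation*}
\mu_j(S W S)=\mu_j\bigl(W^{1/2}S^2W^{1/2}\bigr),\qquad j=1,\dots,n.
\end{equation*}
Since $S^2=D\ge I$, we have $W^{1/2}S^2W^{1/2}\ge W^{1/2}W^{1/2}=W$ in the operator order, and Weyl's monotonicity theorem gives $\mu_j(W^{1/2}S^2W^{1/2})\ge\mu_j(W)$ for each $j$. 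Combining these,
\begin{equation*}
\mu_j(A\circ Z)=\mu_j(S W S)\ge\mu_j(W)=\mu_j(\tilde A\circ Z),\qquad j=1,\dots,n.
\end{equation*}

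This eigenvaluewise domination yields in particular the supermajorization $A\circ Z\prec^w\tilde A\circ Z$, whence $\|A\circ Z\|_!\ge\|\tilde A\circ Z\|_!$ for every symmetric anti-norm by \cite[Lemma 4.2]{BH1} (the same tool used in the proof of Corollary \ref{C-6.4}). Chaining this with the Corollary \ref{C-6.4} estimate completes the proof:
\begin{equation*}
\|A\circ Z\|_!\ge\|\tilde A\circ Z\|_!\ge\|Z\|_!.
\end{equation*}
Everything here is valid for arbitrary $Z\in\bM_n^+$ with no invertibility assumption, since $D$ is invertible and all spectral comparisons go through for positive semidefinite matrices; the only step requiring genuine care is the congruence inequality, which must be routed through the identity $\mu_j(S W S)=\mu_j(W^{1/2}S^2W^{1/2})$ and Weyl monotonicity rather than the operator order.
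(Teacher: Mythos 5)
Your proof is correct and is essentially the paper's own argument: the same reduction via $D^{-1/2}(A\circ Z)D^{-1/2}=(D^{-1/2}AD^{-1/2})\circ Z$ followed by Corollary \ref{C-6.4} for the unit-diagonal Schur map. The congruence step is also the same in substance --- the paper states it as $\|XYX\|_!\le\|Y\|_!$ for $X\le I$, justified by $(XYX)^\downarrow\le Y^\downarrow$, which is exactly your spectrum-cycling plus Weyl monotonicity argument written for $D^{-1/2}\le I$ instead of $D^{1/2}\ge I$.
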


\vskip 5pt
\begin{proof}
Let $D$ be the diagonal matrix with the same diagonal part as $A$; then $D\ge I$ by
assumption. It is immediate to notice that
$D^{-1/2}(A\circ Z)D^{-1/2}=(D^{-1/2}AD^{-1/2})\circ Z$ for all $Z\in\bM_n$ and the
diagonal entries of $D^{-1/2}AD^{-1/2}$ are all equal to $1$. For every symmetric
anti-norm $\|\cdot\|_!$ and every $Z\in\bM_n^+$ we have
$$
\|A\circ Z\|_!\ge\|D^{-1/2}(A\circ Z)D^{-1/2}\|_!
=\|(D^{-1/2}AD^{-1/2})\circ Z\|_!\ge\|Z\|_!,
$$
where the first inequality follows from the fact that $\|XYX\|_!\le\|Y\|_!$ for any
$X,Y\in\bM_n^+$ with $X\le I$ (since $(XYX)^\downarrow\le Y^\downarrow$) and the second
inequality is a special case of Corollary \ref{C-6.4}.
\end{proof}

The above theorem also contains some trace inequalities. Indeed, Theorem \ref{T-6.5} means 
that we have the supermajorization $A\circ Z \prec^w Z$. Since concave increasing functions
preserve supermajorization we infer:

\vskip 5pt
\begin{cor}
Let $A\in\bM_n^+$ with all its diagonal entries  greater than or equal to $1$. Then, for
every increasing concave function $f(t)$ on $[0,\infty)$ and every $Z\in\bM_n^+$,
$$
{\mathrm{Tr\,}} f(A\circ Z)\ge {\mathrm{Tr\,}} f(Z).
$$ 
\end{cor}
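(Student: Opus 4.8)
The plan is to deduce the trace inequality from Theorem \ref{T-6.5} by passing through the Ky Fan anti-norms and then invoking the standard correspondence between supermajorization and increasing concave functions. First I would specialize Theorem \ref{T-6.5} to the family of Ky Fan $k$-anti-norms $\|\cdot\|_{\{k\}}$ of Example \ref{E-4.1}. Since these are genuine symmetric anti-norms, Theorem \ref{T-6.5} gives
$$
\|A\circ Z\|_{\{k\}}\ge\|Z\|_{\{k\}},\qquad k=1,\dots,n.
$$
Because $A\circ Z$ and $Z$ lie in $\bM_n^+$, their singular values coincide with their eigenvalues, so $\|X\|_{\{k\}}=\sum_{j=1}^k\lambda_{n+1-j}(X)$ is exactly the sum of the $k$ smallest eigenvalues. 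The displayed inequalities are therefore precisely the defining relations of the supermajorization $A\circ Z\prec^w Z$, as already announced in the text preceding the statement.

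It remains to turn this supermajorization into the desired trace inequality. The functional $X\mapsto\Tr f(X)=\sum_i f(\lambda_i(X))$ is not positively homogeneous and hence is not itself an anti-norm, so I cannot apply Theorem \ref{T-6.5} to it directly; instead I would invoke the classical characterization (Marshall--Olkin \cite{MO}): for $x,y\in\bR_+^n$ one has $x\prec^w y$ if and only if $\sum_i\phi(x_i)\ge\sum_i\phi(y_i)$ for every increasing concave function $\phi$. Applying this with $x=\mu(A\circ Z)$, $y=\mu(Z)$, and $\phi=f$ yields $\Tr f(A\circ Z)\ge\Tr f(Z)$, which is the claim.

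The substance of the argument is entirely in Theorem \ref{T-6.5}; this last step is routine, and the only point demanding care is the direction of the majorization--function correspondence. For \emph{weak} supermajorization $\prec^w$, which controls only the partial sums of the smallest eigenvalues from below, one genuinely needs $f$ to be both increasing and concave: the monotonicity is what absorbs the slack left by a weak, as opposed to a full, majorization. A clean way to see the equivalence I rely on is to note that $x\prec^w y$ is equivalent to $-x\prec_w -y$ (weak submajorization), apply the familiar fact that increasing convex functions preserve $\prec_w$, and then substitute $\phi(t)=-g(-t)$ to pass from an increasing convex $g$ to an increasing concave $\phi$ with the inequality reversed.
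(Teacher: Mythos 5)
Your proof is correct and follows essentially the same route as the paper: Theorem \ref{T-6.5} applied to the Ky Fan anti-norms yields the supermajorization $A\circ Z\prec^w Z$, and the trace inequality then follows because increasing concave functions preserve supermajorization. Your explicit verification of the majorization--function correspondence (reducing $\prec^w$ to $\prec_w$ via negation) is the same device the paper itself uses in the proof of Theorem \ref{T-2.7}, so nothing here departs from the paper's argument.
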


\vskip 5pt
In the following we apply Theorem \ref{T-6.5} to obtain the anti-norm version of the
arithmetic-geometric inequality.

\begin{cor}
If $A\in\bM_n^+$ is invertible and $0<\alpha\le1/2$, then, for any symmetric anti-norm
$\|\cdot\|_!$ and every $Z\in\bM_n^+$,
\begin{equation}\label{F-6.4}
\|Z\|_!\ge\frac{1}{2\alpha}\,\bigg\|\int_0^\alpha
(A^{t-\frac{1}{2}}ZA^{\frac{1}{2}-t}+A^{\frac{1}{2}-t}ZA^{t-\frac{1}{2}})\,dt\bigg\|_!,
\end{equation}
whenever the matrix integral in the right-hand side is in $\bM_n^+$.
\end{cor}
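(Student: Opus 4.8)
The plan is to turn the right-hand side into a Schur multiplication and then invoke Corollary \ref{C-6.4}. First I would diagonalize $A$: writing $A=U\Lambda U^*$ with $\Lambda=\diag(a_1,\dots,a_n)$, each term $A^{t-\frac12}ZA^{\frac12-t}$ equals $U\bigl[\Lambda^{t-\frac12}(U^*ZU)\Lambda^{\frac12-t}\bigr]U^*$, so by the unitary invariance of anti-norms (property 2 of Definition \ref{D-4.1}) I may assume $A=\Lambda$ is diagonal and replace $Z$ by $U^*ZU$; the hypothesis that the integral lies in $\bM_n^+$ is preserved. With $A$ diagonal the computation is entrywise: the $(i,j)$ entry of $A^{t-\frac12}ZA^{\frac12-t}+A^{\frac12-t}ZA^{t-\frac12}$ is $Z_{ij}\bigl((a_i/a_j)^{t-\frac12}+(a_i/a_j)^{\frac12-t}\bigr)$, so the matrix integral is the Schur product $C\circ Z$. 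Putting $\beta_k:=\log a_k$ and $r_{ij}:=\beta_i-\beta_j$, an elementary integration together with $\sinh((\alpha-\tfrac12)r)+\sinh(\tfrac r2)=2\sinh(\tfrac{\alpha r}{2})\cosh(\tfrac{(\alpha-1)r}{2})$ gives $C_{ij}=2\alpha\,\phi(r_{ij})$ with
\begin{equation*}
\phi(r):=\frac{\sinh(\alpha r/2)}{\alpha r/2}\,\cosh\bigl((1-\alpha)r/2\bigr),\qquad\phi(0)=1.
\end{equation*}
Hence, writing $M:=(2\alpha)^{-1}C=(\phi(r_{ij}))_{ij}$, which has all diagonal entries equal to $1$, the right-hand side of \eqref{F-6.4} is exactly $\|M\circ Z\|_!$, and the claim reduces to $\|Z\|_!\ge\|M\circ Z\|_!$.

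To establish this I would recover $Z$ from $M\circ Z$ by a second Schur multiplication that is a positive unital trace-preserving map. Since $\phi(r)>0$ for all $r$ (both factors are strictly positive), the Schur inverse $N:=(1/\phi(r_{ij}))_{ij}=(\psi(r_{ij}))_{ij}$ is well defined, where
\begin{equation*}
\psi(r):=\frac{1}{\phi(r)}=\frac{\alpha r/2}{\sinh(\alpha r/2)}\,\mathrm{sech}\bigl((1-\alpha)r/2\bigr).
\end{equation*}
By construction $N\circ M$ is the all-ones matrix, so $N\circ(M\circ Z)=Z$, and $N$ has all diagonal entries $\psi(0)=1$. If I can show $N\in\bM_n^+$, then $\bF(W):=N\circ W$ is a positive, unital, trace-preserving linear map on $\bM_n$, and Corollary \ref{C-6.4} applied to $\bF$ with the input $M\circ Z\in\bM_n^+$ (in the cone by hypothesis) yields $\|Z\|_!=\|\bF(M\circ Z)\|_!\ge\|M\circ Z\|_!$, which is the desired inequality.

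It remains to prove $N\succeq0$; since $N_{ij}=\psi(\beta_i-\beta_j)$, it suffices to show that $\psi$ is a positive-definite function on $\bR$, and this is the heart of the matter. I would invoke two classical facts: (a) $\mathrm{sech}(cx)$ is positive definite for every $c>0$, its Fourier transform being a positive multiple of $\mathrm{sech}(\pi\xi/(2c))$; and (b) $x/\sinh x$ is positive definite, via the product expansion $\sinh x/x=\prod_{k\ge1}(1+x^2/(k\pi)^2)$, which gives $x/\sinh x=\prod_{k\ge1}(k\pi)^2/\bigl((k\pi)^2+x^2\bigr)$, an infinite product of Cauchy (Poisson-kernel) factors $c^2/(c^2+x^2)$ that are each positive definite and whose partial products therefore are too. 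Rescaling the argument preserves positive definiteness, so both $(\alpha r/2)/\sinh(\alpha r/2)$ and $\mathrm{sech}((1-\alpha)r/2)$ are positive definite; as the pointwise product of two positive-definite functions is again positive definite (the associated matrices are PSD and Schur products of PSD matrices are PSD), the function $\psi$ is positive definite, whence $N\succeq0$. The main obstacle is precisely establishing (b): one must justify the infinite-product representation and the passage to the limit for positive definiteness, after which the assembly through Corollary \ref{C-6.4} is routine. The hypothesis $0<\alpha\le\tfrac12$ ensures $1-\alpha>0$ (so both scale factors are genuine) and keeps the integration range $t\in[0,\alpha]$ inside $[0,\tfrac12]$.
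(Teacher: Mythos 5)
Your proof is correct and follows essentially the same route as the paper: reduce to diagonal $A$, recognize the integral as a Schur product, pass to the entrywise-reciprocal multiplier $N$ with unit diagonal, prove $N\in\bM_n^+$ via positive definiteness of $x\mapsto\frac{\alpha x}{\sinh(\alpha x)\cosh((1-\alpha)x)}$, and conclude by the Schur-multiplication anti-norm inequality (the paper invokes Theorem \ref{T-6.5} where you use Corollary \ref{C-6.4}; this is the same thing here since the diagonal entries equal $1$). The only cosmetic difference is that the paper simply cites \cite{HK0} for the positive definiteness of $1/\cosh(cx)$ and $x/\sinh(cx)$, whereas you reprove these classical facts via the Fourier transform of $\mathrm{sech}$ and the infinite-product expansion of $\sinh$.
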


\begin{proof}
We may assume that $A$ is a diagonal matrix with diagonals $\lambda_1,\dots,\lambda_n$.
Then it is easy to check that
$$
\frac{1}{2\alpha}\int_0^\alpha
(A^{t-\frac{1}{2}}ZA^{\frac{1}{2}-t}+A^{\frac{1}{2}-t}ZA^{t-\frac{1}{2}})\,dt
=\Biggl[\frac{\int_0^\alpha(\lambda_i^t\lambda_j^{1-t}+\lambda_i^{1-t}\lambda_j^t)\,dt}
{2\alpha\sqrt{\lambda_i\lambda_j}}\Biggr]_{i,j=1}^n\circ Z.
$$
Hence \eqref{F-6.4} follows from Theorem \ref{T-6.5} once we show that
$$
\Biggl[\frac{2\alpha\sqrt{\lambda_i\lambda_j}}
{\int_0^\alpha(\lambda_i^t\lambda_j^{1-t}+\lambda_i^{1-t}\lambda_j^t)\,dt}
\Biggr]_{i,j=1}^n \in\bM_n^+.
$$
For this it suffices by \cite[Theorem 1.1]{HK0} to prove that
$$
\phi(x):=\frac{2\alpha e^x}{\int_0^\alpha(e^{2tx}+e^{2(1-t)x})\,dt}
$$
is a positive definite function on $\bR$. A direct computation gives
\begin{align*}
\phi(x)&=\frac{4\alpha x}{e^x-e^{-x}-e^{(1-2\alpha)x}+e^{(2\alpha-1)x}} \\
&=\frac{2\alpha x}{\sinh x-\sinh((1-2\alpha)x)} \\
&=\frac{\alpha x}{\cosh((1-\alpha)x)\sinh(\alpha x)}.
\end{align*}
Since $1/\cosh((1-\alpha)x)$ and $x/\sinh(\alpha x)$ are positive definite on $\bR$ (see
\cite{HK0}), so is $\phi(x)$.
\end{proof}

In particular, letting $\alpha\searrow0$ in \eqref{F-6.4} gives
\begin{equation}\label{F-6.5}
\|Z\|_!\ge\frac{1}{2}\|A^{1/2}ZA^{-1/2}+A^{-1/2}ZA^{1/2}\|_!
\end{equation}
whenever $A^{1/2}ZA^{-1/2}+A^{-1/2}ZA^{1/2}\ge0$. Moreover, the case $\alpha=1/2$ of
\eqref{F-6.4} is
\begin{equation}\label{F-6.6}
\|Z\|_!\ge\bigg\|\int_0^1A^{t-\frac{1}{2}}ZA^{\frac{1}{2}-t}\,dt\bigg\|_!
\end{equation}
whenever $\int_0^1A^{t-\frac{1}{2}}ZA^{\frac{1}{2}-t}\,dt\ge0$.
Observe that \eqref{F-6.5} can be written as 
\begin{equation*}
\|A^{1/2}(A^{-1/2}ZA^{-1/2})A^{1/2}\|_!
\ge\frac{1}{2}\|A(A^{-1/2}ZA^{-1/2})+(A^{-1/2}ZA^{-1/2})A\|_!
\end{equation*}
whenever $A(A^{-1/2}ZA^{-1/2})+(A^{-1/2}ZA^{-1/2})A\ge0$. Hence we obtain the next
corollary.

\begin{cor}\label{rev-AGM} Let $A, Z\in\bM_n^+$ and assume that $AZ + ZA \ge 0$. Then, for
any symmetric anti-norm,
\begin{equation*}
\| A^{1/2}ZA^{1/2}\|_! \ge \frac{1}{2} \| AZ + ZA \|_!.
\end{equation*}
\end{cor}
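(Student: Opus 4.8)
The plan is to read the corollary off directly from \eqref{F-6.5} by a single congruence substitution, so that essentially all the content is already contained in the preceding material. Recall that \eqref{F-6.5} asserts, for invertible $A\in\bM_n^+$ and every $W\in\bM_n^+$ satisfying $A^{1/2}WA^{-1/2}+A^{-1/2}WA^{1/2}\ge0$, the inequality $\|W\|_!\ge\frac12\|A^{1/2}WA^{-1/2}+A^{-1/2}WA^{1/2}\|_!$. First I would treat the case of invertible $A$ and apply this with the particular choice $W:=A^{1/2}ZA^{1/2}$, which is again an element of $\bM_n^+$ since $Z\in\bM_n^+$. (Equivalently, one may start from the rewritten form of \eqref{F-6.5} displayed just above the statement and set $A^{-1/2}ZA^{-1/2}$ to be the new variable; the two routes are identical.)

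The only computation is to simplify the two congruence terms for this choice of $W$: one has $A^{1/2}WA^{-1/2}=A^{1/2}(A^{1/2}ZA^{1/2})A^{-1/2}=AZ$ and symmetrically $A^{-1/2}WA^{1/2}=ZA$, whence their sum is exactly $AZ+ZA$. Consequently the positivity proviso of \eqref{F-6.5} becomes the hypothesis $AZ+ZA\ge0$, the left-hand side becomes $\|A^{1/2}ZA^{1/2}\|_!$, and the right-hand side becomes $\frac12\|AZ+ZA\|_!$. This is precisely the asserted inequality, so the invertible case is immediate.

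Finally I would remove the invertibility assumption by a continuity argument. Replacing $A$ by $A+\varepsilon I$ (invertible for $\varepsilon>0$) preserves the proviso, since $(A+\varepsilon I)Z+Z(A+\varepsilon I)=AZ+ZA+2\varepsilon Z\ge0$ whenever $Z\ge0$; applying the invertible case to $A+\varepsilon I$ and letting $\varepsilon\searrow0$ then yields the general statement, using the continuity of the symmetric anti-norm together with the convergences $(A+\varepsilon I)^{1/2}Z(A+\varepsilon I)^{1/2}\to A^{1/2}ZA^{1/2}$ and $(A+\varepsilon I)Z+Z(A+\varepsilon I)\to AZ+ZA$. I do not anticipate any genuine obstacle here: the substance lies entirely in \eqref{F-6.5}, and the corollary is just the substitution $W=A^{1/2}ZA^{1/2}$. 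The only point meriting a little care is the legitimacy of that substitution when $A$ is singular, which is exactly what the routine $\varepsilon$-perturbation above resolves, the essential check being that the perturbation keeps the relevant argument positive so that \eqref{F-6.5} remains applicable at each stage.
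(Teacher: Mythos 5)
Your proof is correct and is essentially the paper's own argument: the corollary is read off from \eqref{F-6.5} by the substitution $Z\mapsto A^{1/2}ZA^{1/2}$, which is exactly the rewriting the paper displays just before the statement. Your additional $\varepsilon$-perturbation to cover singular $A$ is a sound (and worthwhile) supplement that the paper leaves implicit, since \eqref{F-6.5} is only available for invertible $A$.
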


\vskip 5pt
This may be considered as the anti-norm counterpart of the arithmetic-geometric
inequality $\frac{1}{2}\|AZ+ZA\|\ge\|A^{1/2}ZA^{1/2}\|$ for symmetric norms. Similarly,
\eqref{F-6.6} is the anti-norm counterpart of the logarithmic-geometric inequality
 for symmetric norms, see \cite{HK0} for
symmetric norm inequalities for means of matrices. The special case of the Minkowski
functional in Corollary \ref{rev-AGM} yields the well-known determinantal inequality
$$
\det A\det Z \ge \det \left(\frac{AZ +ZA}{2}\right) \
$$
whenever $A,Z\in \bM_n^+$ and $AZ + ZA \ge 0$.
The positivity assumption $AZ + ZA \ge 0$ is essential; in fact
$\det ((AZ +ZA)/2) \le \det A\det Z$ does not hold for all $A, Z\in\bM_n^+$, as it is shown
by considering
$$
A=\begin{pmatrix} 1&0\\ 0&0 \end{pmatrix} \oplus \begin{pmatrix} 1&0\\ 0&0 \end{pmatrix},
\quad 
Z=\begin{pmatrix} 1 &1\\ 1&1 
\end{pmatrix}
\oplus\begin{pmatrix} 1 &1\\ 1&1 
\end{pmatrix} .
$$

\vskip 5pt
When $\|\cdot\|$ is a symmetric norm on $\bM_n$, an extended version of the matrix H\"older
inequality
\begin{equation}\label{F-6.11}
\|AB\|\le\|\,|A|^p\,\|^{1/p}\|\,|B|^q\,\|^{1/q},\qquad A,B\in\bM_n,
\end{equation}
is well-known \cite[IV.2.6]{Bh}, where $p,q\in(1,\infty)$ with $1/p+1/q=1$. On the other
hand, the matrix reverse H\"older inequality
\begin{equation}\label{F-6.12}
\Tr AB\ge\|A\|_p\|B\|_q,\qquad A,B\in\bM_n^+,
\end{equation}
was very recently noticed in \cite{HMPB}, where $p\in(0,1)$ and $q\in(-\infty,0)$ with
$1/p+1/q=1$. Here, note that $\|A\|_p$ and $\|B\|_q$ are the Schatten anti-norms. Similarly
to \eqref{F-6.11} we extend \eqref{F-6.12} to a reverse H\"older inequality involving a
derived anti-norm ($\|B^q\|^{1/q}$ in \eqref{F-6.13}) introduced in Proposition \ref{P-4.6}.

\begin{prop}
Let $\|\cdot\|$ be a symmetric norm on $\bM_n$ and let $p\in(0,1)$ and $q\in(-\infty,0)$
with $1/p+1/q=1$. Then, for every $A,B\in\bM_n^+$,
\begin{equation}\label{F-6.13}
\|AB\|\ge\|A^p\|^{1/p}\|B^q\|^{1/q}.
\end{equation}
\end{prop}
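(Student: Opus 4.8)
The plan is to mirror the proof of the ordinary matrix H\"older inequality \eqref{F-6.11}: reduce everything to the symmetric gauge function of $\|\cdot\|$ and then combine a scalar (gauge) reverse H\"older inequality with an appropriate singular value majorization. Write $\Phi$ for the symmetric gauge function of $\|\cdot\|$ and set $a_i:=\mu_i(A)$, $b_i:=\mu_i(B)$. If $B$ is not invertible, then $\|B^q\|^{1/q}$ is exactly the derived anti-norm of Proposition \ref{P-4.6} (with parameter $-q>0$) and therefore equals $0$; the right-hand side vanishes and there is nothing to prove. Hence I may assume $B$ invertible, and by continuity from above (replacing $A$ by $A+\varepsilon I$ and letting $\varepsilon\searrow0$) that $A$ is invertible too, so that $a_i,b_i>0$.

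First I would establish the gauge reverse H\"older inequality
\begin{equation*}
\Phi\bigl((x_iy_i)_i\bigr)\ge\Phi\bigl((x_i^p)_i\bigr)^{1/p}\,\Phi\bigl((y_i^q)_i\bigr)^{1/q},
\qquad x\in[0,\infty)^n,\ y\in(0,\infty)^n.
\end{equation*}
By positive homogeneity I may normalize so that $\Phi((x_i^p)_i)=\Phi((y_i^q)_i)=1$, and must then prove $\Phi((x_iy_i)_i)\ge1$. The engine is the reverse Young inequality $uv\ge\frac{u^p}{p}+\frac{v^q}{q}$ (valid for $u\ge0$, $v>0$, $p\in(0,1)$, $q<0$), which I rewrite coordinatewise as $\frac1p x_i^p\le x_iy_i+\frac1{|q|}y_i^q$. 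Applying $\Phi$, which is monotone and subadditive, gives
\begin{equation*}
\frac1p=\frac1p\,\Phi\bigl((x_i^p)_i\bigr)\le\Phi\bigl((x_iy_i)_i\bigr)+\frac1{|q|}\,\Phi\bigl((y_i^q)_i\bigr)=\Phi\bigl((x_iy_i)_i\bigr)+\frac1{|q|},
\end{equation*}
and since $\frac1p-\frac1{|q|}=\frac1p+\frac1q=1$, the claim follows. This is the clean part of the argument.

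It remains to pass from the singular values of $A,B$ to those of $AB$. Applying the gauge inequality with $x=(a_i)_i$ and the \emph{reversed} vector $y=(b_{n+1-i})_i$, and using permutation invariance of $\Phi$, I obtain $\Phi\bigl((a_ib_{n+1-i})_i\bigr)\ge\|A^p\|^{1/p}\|B^q\|^{1/q}$. Thus it suffices to show $\|AB\|=\Phi(\mu(AB))\ge\Phi\bigl((a_ib_{n+1-i})_i\bigr)$, and this will follow once I prove the weak log-majorization $(a_ib_{n+1-i})_i\prec_{w(\log)}\mu(AB)$, since $\prec_{w(\log)}$ implies the weak majorization $\prec_w$, under which $\Phi$ is monotone. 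This is the \emph{complementary}, anti-ordered form of the Gel'fand--Naimark majorization used in the proof of \eqref{F-3.6}: whereas Horn's inequality bounds the top products $\prod_{i=1}^k\mu_i(AB)$ from above by the aligned products $\prod_{i=1}^k a_ib_i$, here I need the matching lower bound by the anti-aligned products, together with the determinant identity $\prod_{i=1}^n\mu_i(AB)=\det A\,\det B=\prod_{i=1}^n a_ib_{n+1-i}$. I expect this majorization to be the main obstacle: the Gel'fand--Naimark inequality $\prod_{i=1}^k\mu_i(AB)\ge\prod_{i=1}^k a_ib_{n+1-i}$ delivers the bound only for the products taken in the listed order, and the delicate point is to upgrade it to the decreasing rearrangement of $(a_ib_{n+1-i})_i$, which is precisely what $\prec_{w(\log)}$ demands. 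Once this majorization is secured, combining it with the displayed gauge inequality yields \eqref{F-6.13}.
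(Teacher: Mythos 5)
Your strategy is the same as the paper's: split the proof into $\|AB\|\ge\|A^{\downarrow}B^{\uparrow}\|$ (a Gel'fand--Naimark step) and $\|A^{\downarrow}B^{\uparrow}\|\ge\|A^p\|^{1/p}\|B^q\|^{1/q}$ (a reverse Young/H\"older step). Your second half is complete and correct: the gauge-level reverse H\"older inequality obtained from $uv\ge u^p/p+v^q/q$ together with monotonicity, subadditivity and homogeneity of $\Phi$ is mathematically equivalent to the paper's argument, which runs the same reverse Young inequality through the norm and then optimizes over the scaling $A\mapsto\alpha A$, $B\mapsto\alpha^{-1}B$ instead of normalizing. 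Your reduction to invertible $A,B$ is also fine.

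The gap is the first half, which you flag but never close: you do not establish the weak log-majorization $(\mu_i(A)\mu_{n+1-i}(B))_i\prec_{w(\log)}\mu(AB)$, and your conclusion is explicitly conditional on it (``once this majorization is secured''). As written, the Proposition is therefore not proved. Your diagnosis of the difficulty is accurate: the extremal form of Gel'fand--Naimark used in the proof of \eqref{F-3.6}, namely $\prod_{j=1}^k\mu_j(AB)\ge\bigl\{\prod_{j=1}^k\mu_j(A)\bigr\}\bigl\{\prod_{j=n+1-k}^n\mu_j(B)\bigr\}$, controls only the anti-aligned products taken in listed order, and no soft trick (e.g., antisymmetric tensor powers paired with complementary ranks) upgrades it to the decreasing rearrangement; with spectra such as $(e^{10},e,e^{1/2},1)$ for both matrices and the index set $\{1,4\}$, those bounds are strictly weaker than what is required. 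What closes the gap is not an upgrade of the extremal form but the strong (Lidskii--Wielandt type) form of the Gel'fand--Naimark theorem: for every $k$-element subset $S\subset\{1,\dots,n\}$ one has $\prod_{j=1}^k\mu_j(AB)\ge\prod_{i\in S}\mu_i(A)\mu_{n+1-i}(B)$, equivalently $A^{\downarrow}B^{\uparrow}\prec_{(\log)}|AB|$. This is a known theorem, the multiplicative counterpart of the Lidskii--Wielandt majorization \eqref{Fan-Lidskii} (see \cite{MO}, \cite{LM}), and it is exactly what the paper invokes in one line (``By the Gel'fand and Naimark majorization we have $|AB|\succ_{(\log)}A^{\downarrow}B^{\uparrow}$'') before running the reverse Young argument. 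Your proof becomes correct once this known result is cited; without it, the key step is missing.
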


\vskip 5pt
\begin{proof}
We may assume that $A$ and $B$ are invertible. By the Gel'fand and Naimark majorization
we have
$$
|AB|\succ_{(\log)}A^\downarrow B^\uparrow
$$
so that $\|AB\|\ge\|A^\downarrow B^\uparrow\|$. It is elementary to check that
$$
st\ge\frac{s^p}{p}+\frac{t^q}{q},\qquad s,t>0.
$$
This implies that
$p^{-1}(A^\downarrow)^p\le A^\downarrow B^\uparrow+(-q)^{-1}(B^\uparrow)^q$ and hence
$p^{-1}\|A^p\|\le\|A^\downarrow B^\uparrow\|+(-q)^{-1}\|B^q\|$ so that
$$
\|A^\downarrow B^\uparrow\|\ge\frac{\|A^p\|}{p}+\frac{\|B^q\|}{q}.
$$
Replacing $A$, $B$ with $\alpha A$, $\alpha^{-1}B$ for any $\alpha>0$ we have
$$
\|A^\downarrow B^\uparrow\|\ge\frac{\alpha^p\|A^p\|}{p}+\frac{\alpha^q\|B^q\|}{q}.
$$
Maximizing the above right-hand side over $\alpha>0$ yields
$$
\|A^\downarrow B^\uparrow\|\ge\|A^p\|^{1/p}\|B^q\|^{1/q},
$$
and the required inequality follows.
\end{proof}

As noticed in the above proof we have the inequality
$\|AB\| \ge \| A^{\downarrow} B^{\uparrow}\|$ for every $A,B\in\bM_n^+$ and all symmetric
norms. Thanks to the Araki (see \cite{Bh}) and the Gel'fand-Naimark log-majorizations,
this is refined for every $r\in(0,1)$ as 
\begin{equation}\label{F-6.14}
\|\,|A^{1/r}B^{1/r}|^r \| \ge \|AB\| \ge  \|\,|A^rB^r|^{1/r} \|
\ge \| A^{\downarrow} B^{\uparrow}\|.
\end{equation}
Hence the left-hand side of \eqref{F-6.13} can be replaced by $ \|\,|A^rB^r|^{1/r} \| $,
in particular by $ \| A^{1/2}BA^{1/2}\|$. By letting $r\searrow 0$ we also obtain the
following result which can be regarded as a substitute for \eqref{F-3.6} in case of the
mean $\beta_0$ in \eqref{F-2.1}.

\begin{cor}
For every invertible $A,B \in \bM_n^+$ and every $k=1,\dots,n$,
$$
{1\over k}\sum_{j=1}^k\mu_j(A\,\beta_0\,B)
\ge \Biggl\{\prod_{j=1}^k\mu_j(A)\Biggr\}^{1/2k}
\Biggl\{\prod_{j=1}^k\mu_{n+1-j}(B)\Biggr\}^{1/2k}.
$$
\end{cor}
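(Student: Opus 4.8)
The plan is to specialize the right half of the chain \eqref{F-6.14} to the Ky Fan $k$-norm and to the matrices $A^{1/2},B^{1/2}$, and then let $r\searrow 0$. Write $\|X\|_{(k)}:=\sum_{j=1}^k\mu_j(X)$ for the Ky Fan $k$-norm, which is a symmetric norm, so that \eqref{F-6.14} applies to it. Substituting $A^{1/2},B^{1/2}$ for $A,B$ in the inequality $\|\,|A^rB^r|^{1/r}\|\ge\|A^\downarrow B^\uparrow\|$ contained in \eqref{F-6.14} gives, for every $r\in(0,1)$,
\[
\bigl\|\,|A^{r/2}B^{r/2}|^{1/r}\bigr\|_{(k)}\ge\bigl\|(A^\downarrow)^{1/2}(B^\uparrow)^{1/2}\bigr\|_{(k)},
\]
where I have used $(A^{1/2})^\downarrow=(A^\downarrow)^{1/2}$ and $(B^{1/2})^\uparrow=(B^\uparrow)^{1/2}$, which hold because taking square roots preserves the ordering of eigenvalues.

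First I would identify the $r\searrow0$ limit of the left-hand side. Since $A,B$ are invertible, $\log A,\log B$ are well defined, and $|A^{r/2}B^{r/2}|^{1/r}=(B^{r/2}A^rB^{r/2})^{1/(2r)}$. A Baker--Campbell--Hausdorff expansion gives $\log(B^{r/2}A^rB^{r/2})=r(\log A+\log B)+O(r^2)$, so the Lie--Trotter type computation yields $|A^{r/2}B^{r/2}|^{1/r}\to\exp\bigl(\tfrac12(\log A+\log B)\bigr)=A\,\beta_0\,B$ by \eqref{F-2.1}. By continuity of eigenvalues this forces $\|\,|A^{r/2}B^{r/2}|^{1/r}\|_{(k)}\to\sum_{j=1}^k\mu_j(A\,\beta_0\,B)$, and passing to the limit in the displayed inequality leaves
\[
\sum_{j=1}^k\mu_j(A\,\beta_0\,B)\ge\bigl\|(A^\downarrow)^{1/2}(B^\uparrow)^{1/2}\bigr\|_{(k)}.
\]

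Next I would evaluate the right-hand side. The matrix $(A^\downarrow)^{1/2}(B^\uparrow)^{1/2}$ is diagonal with nonnegative entries $c_j:=\mu_j(A)^{1/2}\mu_{n+1-j}(B)^{1/2}$, so its Ky Fan $k$-norm is the sum of the $k$ largest of the $c_j$, which is at least $\sum_{j=1}^kc_j$. Dividing by $k$ and applying the arithmetic--geometric mean inequality to $c_1,\dots,c_k$ then gives
\[
\frac1k\sum_{j=1}^k\mu_j(A\,\beta_0\,B)\ge\frac1k\sum_{j=1}^kc_j\ge\Biggl(\prod_{j=1}^kc_j\Biggr)^{1/k}=\Biggl\{\prod_{j=1}^k\mu_j(A)\Biggr\}^{1/2k}\Biggl\{\prod_{j=1}^k\mu_{n+1-j}(B)\Biggr\}^{1/2k},
\]
which is exactly the claimed inequality.

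The main obstacle is the limit step: one must justify rigorously that $|A^{r/2}B^{r/2}|^{1/r}\to A\,\beta_0\,B$ as $r\searrow0$ (the BCH/Lie--Trotter argument, valid precisely because invertibility makes $\log A,\log B$ available) together with the continuity of the Ky Fan $k$-norm along this limit. Everything else is an application of the reverse chain \eqref{F-6.14}, the elementary observation that the Ky Fan $k$-norm of the diagonal matrix $(A^\downarrow)^{1/2}(B^\uparrow)^{1/2}$ dominates $\sum_{j=1}^kc_j$, and the arithmetic--geometric mean inequality.
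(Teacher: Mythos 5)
Your proof is correct. It shares the paper's skeleton --- the chain \eqref{F-6.14}, the Lie--Trotter limit $r\searrow0$ (valid since $A,B$ are invertible), and the substitution $A\mapsto A^{1/2}$, $B\mapsto B^{1/2}$ --- but it handles the right-hand side differently. The paper keeps the reverse H\"older bound $\|\,|A^rB^r|^{1/r}\|\ge\|A^p\|^{1/p}\|B^q\|^{1/q}$ coming from \eqref{F-6.13} and \eqref{F-6.14}, passes to the limit $r\searrow0$, then specializes $\|\cdot\|=k^{-1}\|\cdot\|_{(k)}$ and performs a \emph{second} limit $p\searrow0$, $q\nearrow0$, under which the normalized power sums converge to the geometric means $\bigl\{\prod_{j=1}^k\mu_j(A)\bigr\}^{1/k}$ and $\bigl\{\prod_{j=1}^k\mu_{n+1-j}(B)\bigr\}^{1/k}$. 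You instead retain only the last (Gel'fand--Naimark) link $\|\,|A^rB^r|^{1/r}\|\ge\|A^\downarrow B^\uparrow\|$ of the chain, so that after the Lie--Trotter limit the right-hand side is the Ky Fan $k$-norm of the explicit diagonal matrix $(A^\downarrow)^{1/2}(B^\uparrow)^{1/2}$, which you then bound below by the scalar arithmetic--geometric mean inequality applied to its first $k$ diagonal entries. This buys a shorter, more elementary argument: only one limiting procedure is needed, the reverse H\"older machinery of \eqref{F-6.13} is bypassed entirely, and your intermediate inequality $\sum_{j=1}^k\mu_j(A\,\beta_0\,B)\ge\bigl\|(A^\downarrow)^{1/2}(B^\uparrow)^{1/2}\bigr\|_{(k)}$ is at least as strong as the paper's intermediate step, since the paper itself derives \eqref{F-6.13} from the term $\|A^\downarrow B^\uparrow\|$ via Young's inequality and an optimization over scaling. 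What the paper's longer route buys in exchange is the more general statement $\|\exp(\log A+\log B)\|\ge\|A^p\|^{1/p}\|B^q\|^{1/q}$ for \emph{all} symmetric norms and all conjugate exponents $p\in(0,1)$, $q<0$, which fits its broader theme of derived anti-norms; your shortcut extracts exactly what this corollary needs and nothing more.
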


\vskip 5pt
\begin{proof}
As remarked above by \eqref{F-6.14} we have 
$$
\|\, |A^rB^r|^{1/r} \| \ge \| A^p\|^{1/p}\| B^q\|^{1/q}
$$
for all symmetric norms, $r>0$, and $1/p +1/ q=1$ with $p\in(0,1)$. The Lie-Trotter formula
(see \cite{Bh}) says that $\lim_{r\searrow 0} |A^rB^r|^{1/r}=\exp(\log A + \log B)$, and thus
$$
\|\exp(\log A + \log B)\| \ge \| A^p\|^{1/p}\| B^q\|^{1/q}.
$$
Letting $\|\cdot\|=k^{-1}\|\cdot\|_{(k)}$
($\|\cdot\|_{(k)}$ being the Ky Fan norm) and $p\searrow0$ ($q\nearrow0$)
we obtain
$$
{1\over k}\sum_{j=1}^k\mu_j(\exp(\log A + \log B))
\ge \Biggl\{\prod_{j=1}^k\mu_j(A)\Biggr\}^{1/k}
\Biggl\{\prod_{j=1}^k\mu_{n+1-j}(B)\Biggr\}^{1/k}
$$
for $k=1,\dots,n$. The result follows by replacing $A,B$ by $A^{1/2}, B^{1/2}$.
\end{proof}

\section{Concluding remarks}

Majorization, symmetric norms and their connection with convex/concave functions  play
important roles in matrix analysis. In this paper the stress falls on supermajorization
and anti-norms. It seems that they provide a good framework to study Jensen type
inequalities for operator means. Many questions remain open. Some of them have been
noticed in the text. The binomial operator means $\sigma_{1/m}$ (Example \ref{E-3.11})
and the logarithmic operator mean are cases of special interest deserving
further investigation. It would be also of interest to characterize anti-norms preserving
the log-supermajorization order.

Although we confine our study to the matrix case, it is possible, with some slight
variations, to develop a theory of anti-norms for compact operators and for type II
factors with a finite trace (the semi-finite case might be more delicate).

\vskip 20pt

{\small
Jean-Christophe Bourin

Laboratoire de math\'ematiques, Universit\'e de Franche Comt\'e,

25030 Besan\c con, France

jcbourin@univ-fcomte.fr

\vskip 20pt
Fumio Hiai

Graduate School of Information Sciences, Tohoku University,

Aoba-ku, Sendai 980-8579, Japan

fumio.hiai@gmail.com
}

\end{document}